\theoremstyle{plain}
\newtheorem{theo}{Theorem}[section]
\newtheorem{prop}[theo]{Proposition}
\newtheorem{lemm}[theo]{Lemma}
\newtheorem{cor}[theo]{Cororally}
\theoremstyle{definition}
\newtheorem{defi}[theo]{Definition}
\newtheorem{exa}[theo]{Example}
\newtheorem{rmk}[theo]{Remark}
\newcommand{\identitymap}[1]{\mathrm{id}_{#1}} 
\numberwithin{equation}{section}
\newcommand{\vect}[1]{#1}
\newcommand{\circg}[1]{\langle #1 \rangle}
\newcommand{\Res}[2]{\mathrm{Res}^{#1}_{#2}}
\newcommand{\Mobius}{M\"{o}bius}
\begin{document}
\pagestyle{empty}

\author{Tomoyuki Tamura\\ Graduate school of Mathematics\\ Kyushu University \\ 2017}
\title{Symmetric powers of permutation representations of finite groups and primitive colorings on polyhedrons}
\date{}
\maketitle

\pagestyle{plain}

\begin{abstract}
In this paper\footnote[1]{2010 Mathematics Subject Classification. Primary 19A22, Secondary  20C15.\\ Keywords and Phrases; finite group; Burnside ring: $\lambda$-ring; group action, symmetric powers representation, necklace polynomial.},  we define a set which has a finite group action and is generated by a finite color set, a set which has a finite group action, and a subset of the set of non negative integers. we state its properties to apply one of solution of the following two problems, respectively. First, we calculate the generating function of the character of symmetric powers of permutation representation associated with a set which has a finite group action. Second, we calculate the number of primitive colorings on some objects of polyhedrons. It is a generalization of the calculation of the number of primitive necklaces by N.Metropolis and G-C.Rota. 
\end{abstract}

\section{Introduction}
Let $G$ be a finite group. In this paper, we will discuss actions of $G$, which will be prepared in \S 2. In \S 3.1, we will define the set ``$|A|$-colored $N$-nested $G$-set", written by $J_{N,A,a}(X)$. Moreover, we will define the degree of an element of $J_{N,A,a}(X)$ and decompose $J_{N,A,a}(X)$ into the disjoint union of $G$-set $J^n_{N,A,a}(X)$, which is the set of $f\in J_{N,A,a}(X)$ such that $\deg(f)=n$ holds.

We will also define a power series $\varphi_{H,t}(J_{N,A,a}(X))$ by
\[ \varphi_{H,t}(J_{N,A,a}(X))=\sum_{n=0}^{\infty}\varphi_H(J^n_{N,A,a}(X))t^n\]
for any subgroup $H$ of $G$. 

Note that the definition of $J_{N,A,a}(X)$ depends on a finite set $A$, an element $a\in A$, a non-empty subset $N\subset\mathbb{N}\cup\{0\}$ and a $G$-set $X$. In \S 3.2 and \S 3.3, we will discuss the following two problems with $J_{N,A,a}(X)$ and the power series $\varphi_{H,t}(J_{N,A,a}(X))$:
\begin{itemize}
\item[{\rm (1)}] Calculation of characters of exterior powers of representations with a character of symmetric powers.
\item[{\rm (2)}] Calculation of the number of primitive colorings on some objects of polyhedrons.
\end{itemize}
In \S 1.1 and \S 1.2, we outline problems (1) and (2).

\subsection{Symmetric powers of representations}
For given representation $\rho:G\rightarrow GL(V)$ with finite dimension over the complex field $\mathbb{C}$ and integer $i\geq 0$, we can define the $i$-th exterior power of the representation $\rho$ and $i$-th symmetric power of the representation $\rho$ by
\begin{eqnarray*}
&{}&\Lambda^i\rho:G\rightarrow GL(\textstyle\bigwedge^i(V)),\\
&{}&\Lambda^i\rho(g)(v_1\wedge\cdots\wedge v_i):=(\rho(g)v_1)\wedge\cdots\wedge (\rho(g)v_i),\\
&{}&S^i\rho:G\rightarrow GL(\textstyle S^i(V)),\\
&{}&S^i\rho(g)(v_1\cdots v_i):=(\rho(g)v_1)\cdots (\rho(g)v_i)
\end{eqnarray*}
for any integer $i\geq 0$ where $g\in G$ and $v_1,\dots,\ v_i\in V$. We will apply $\varphi_{H,t}(J_{N,A,a}(X))$ to calculate the generating function of the character of $i$-th symmetric powers of permutation representation associated with $X$.  We consider this calculation to find how to calculate exterior powers of representations. This problem was raised in \cite{Knu}

First, we discuss $\lambda$-rings and relation between $\lambda$-rings and finite groups. A pre $\lambda$-ring is a commutative ring with operations $\lambda^n:R\rightarrow R$, $n=0,1,2,\dots,$ such that $\lambda^0(r)=1$, $\lambda^1(r)=r $ and $\lambda^n(r+s)=\sum_{i+j=n}\lambda^i(r)\lambda^j(s)$ hold for any $r\in R$ and integer $n \geq 0$. For any finite group $G$, the set $CF(G)$, which  is the set of all class function $f:G\rightarrow\mathbb{C}$ satisfying $f(g^{-1}xg)=f(x)$ for any $x,g\in G$, has a commutative $\mathbb{Q}$-algebra structure via addition, multiplication, scalar of $\mathbb{C}$ and a structure of pre-$\lambda$-ring\footnote[1]{In addition, Knutson said that the $CF(G)$ becomes a $\lambda$-ring. In this paper we will not state about $\lambda$-rings. see \cite{Knu}.} whose $n$-th Adams operations $\psi^n$ satisfies $\psi^n(f)(g):=f(g^n) $for any $f\in CF(G)$, $g\in G$ and integer $n\geq 1$. Knutson showed in \cite[p.84]{Knu} that the character of $\textstyle\Lambda^i(V)$ is equal to $\lambda^i(\chi)$ for any $i\geq 0$ where $\chi$ is the character of a representation $\rho$ and $\lambda^i$ is a $\lambda$-operation of $CF(G)$. 

In \S 2.3, we will introduce symmetric powers operations $S^n$, $n=0,1,2,\ldots$ on $\lambda$-rings, and prove that $S^n(\chi)$ is the character of the $n$-th symmetric powers of the representation whose character is $\chi$. By the definition of symmetric powers operations, the calculating of the generating function character $\lambda^n(\chi)$ is equivalent to the calculating of the generating function of $S^n(\chi)$.

In \S 3.2, we will discuss the case of $N=\mathbb{N}\cup\{0\}$ and $|A|=2$. We will show that the $G$-set $J_{N,A,a}(X)$ is isomorphic to the symmetric algebra of $X$ as a $G$-set. The symmetric algebra of $X$ was defined in \cite[\S 2.13]{DS}. Furthermore, for any $g\in G$ we will show that the generating function $\varphi_{\circg{g},t}(J_{N,A,a}(X))$ is equal to the generating function of $S^n(\chi)(g)$ where $\chi$ is the permutation character associated with $X$. By the definition of symmetric powers operation on arbitrary $\lambda$-rings and the result of \cite{Knu}, The calculation of the generating function of $S^n(\chi)(g)$ is equivalent to the calculation of the generating function of $\lambda^n(\chi)(g)$. Hence, the result of \S 3.2 will give one of calculation method of the character of exterior powers of representations.

\subsection{Primitive colorings on polyhedrons}

In this paper, we denote the group with one element by $C_1$. For any subgroup $V$ of $G$, we denote by $\mu_{H}(X)$ the number of orbits in $X$ which is isomorphic to $G/H$ (This notation will be defined again in \S 2).

 In \S 3.3, we will discuss the case of $N=\{0, 1\}$. We will show that the set $J_{N,A,a}(X)$ is identified with the set of maps $\iota :X\rightarrow A$, and the set $J^n_{N,A,a}(X)$ is identified with the set of maps $\iota :X\rightarrow A$ such that $|\{x \in X\mid \iota(x)\neq a\}|=n$ holds. We will introduce the method for calculating $\mu_{C_1}(J_{N,A,a}(X))$ and will introduce the method for calculating the generating function of $\mu_{C_1}(J^n_{N,A,a}(X))$.

Now, we discuss the meaning to calculate $\mu_{C_1}(J_{N,A,a}(X))$. We assume that $X$ is the set of objects on polyhedrons (vertices, edges), and $G$ is a rotation group of $X$ and $A$ is a finite color set. A primitive coloring on $X$ of $G$ with $A$ is a coloring on $X$ with $A$ which is asymmetric under rotation of $G$. More detail, see Definition \ref{color}.

We can regard the set $J^n_{N,A,a}(X)$ as the set of colorings on $X$ which has $(|X|-n)$ times of $a$ colored, and the set $J_{N,A,a}(X)$ as the set of all colorings on $X$ with color of $A$. Moreover, $\mu_{C_1}(J^n_{N,A,a}(X))$ is the number of primitive colorings which has $(|X|-n)$ times of $a$ colored,
 and $\mu_{C_1}(J^n_{N,A,a}(X))$ provides the number of primitive colorings.

For the these colorings, Metropolis and Rota \cite{MR} considered the number of primitive necklaces. Given a set of colors $A$, a necklace is a result of placing $n$ colored beads around a circle. A necklace which is asymmetric under rotation is said to be primitive.

For example, we consider when $n=6$ and $A=\{0,1\}$. Figure 1 is an example of primitive. However, Figure 2 is not primitive because it has a symmetry under $\dfrac{2\pi}{3}$ rotation.

\begin{figure}[h]
	\centering \includegraphics[width=2.5cm,height=2.5cm]{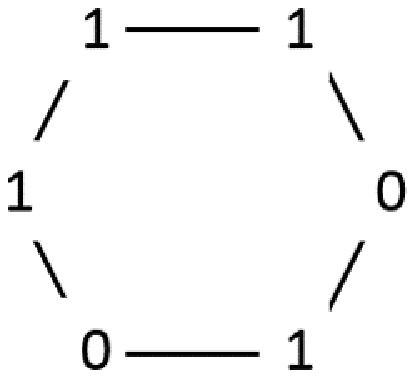}$\quad\quad\quad\quad\quad\ \ \ \ \ $
	\includegraphics[width=2.5cm,height=2.5cm]{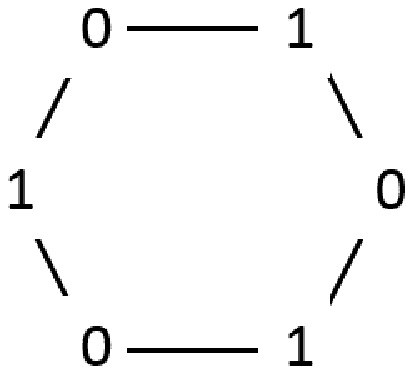}\\
	\centering Figure 1: primitive necklace$\quad\quad$ Figure 2: non-primitive necklace 
\end{figure}

The number of primitive necklaces $M(k,n)$ is computed by the following formula, 
\begin{eqnarray}\label{necklace}
M(k,n)=\dfrac{1}{n}\sum_{d\mid n}\mu\Big(\dfrac{n}{d}\Big)k^d,
\end{eqnarray}
where $k$ is the cardinality of $A$. Note that $M(k,n)$ is a polynomial with an indeterminate variable $k$. 

For each integer $n\geq 1$, let $X_n$ be the set of all vertices of a regular $n$-gon and let $C_n$ be the cyclic group whose cardinality is $n$. Then, we can regard the set $X_n$ as a $C_n$ set, and hence, we will show that $\mu_{C_1}(J_{N,A,a}(X_n))$ is the number of asymmetric $k$-colorings of vertices of the $n$-regular polygons, which also coincides with the necklace polynomial $M(|A|,n)$.

Metropolis and Rota \cite{MR} showed the following formula: 
\begin{eqnarray}\label{necklace2}
\dfrac{1}{1-kx}=\prod_{n=1}^{\infty}\Big(\dfrac{1}{1-x^n}\Big)^{M(k,n)}
\end{eqnarray}
for any integer $k\geq 1$, as combinatorial theory. The formula (\ref{necklace2}) is called a cyclotomic identity.

In addition, Metropolis and Rota showed the following identities for necklace polynomials:
\begin{eqnarray}
\label{necklaceM} M(k_1k_2,n)&=&\sum_{[i,j]=n}(i,j)M(k_1,i)M(k_2,j),\\
\label{necklaceF} M(k^r,n)&=&\sum_{[j,r]=nr}\dfrac{j}{n}M(k,j)
\end{eqnarray}
for any integers $k_1, k_2, n\geq 1$ and $r\geq 1$. Identities (\ref{necklaceM}) and (\ref{necklaceF}) are origin of the multiplication of necklace rings and Frobenius operations. In \S 3.4, we generalize identities (\ref{necklaceM}) and (\ref{necklaceF}) to (\ref{GeneM}) and (\ref{GeneF}) with $\mu_{V}(J_{N,A,a}(X))$ for all subgroups $V$ of $G$. We consider a regular $n$-gon in (\ref{GeneM}) or (\ref{GeneF}), and hence, we will obtain the same result of (\ref{necklaceM}) or (\ref{necklaceF}), respectively.

In \S 4, we consider $J_{N,A,a}(X)$ when
\begin{itemize}
\item [{\rm (i)}] The set $X$ is the set of all vertices of a regular $n$-prism and $G$ is a dihedral group $D_{n}$,
\item [{\rm (ii)}] The set $X$ is the set of all vertices of a regular $n$-gon and $G$ is a dihedral group $D_{n}$,
\end{itemize}
as examples.

\begin{rmk}
Suppose that $X$ is isomorphic to $G/C_1$. Note that for any subgroup $V$ of $G$, $\mu_{V}(J_{N,A,a}(X))$ is equal to the polynomial $M_G(k,V)$, which was introduced in \cite{Oh1}, \cite{Oh2}, where $k$ is the cardinality of $A$. In \cite{Oh1}, $k$ belongs to some $\lambda$-rings, and Oh used the polynomial $M_G(k,V)$ to define the ring homomorphism, called exponential map. In \cite{Oh2}, $k$ belongs to the set of all integers $\mathbb{Z}$, and Oh generalized identities (\ref{necklaceM}) and (\ref{necklaceF}) for $M_G(k,V)$.
\end{rmk} 

Here, we denote the following notations in this paper.
\begin{itemize}
\item[(i)] Let $\mathbb{N}$, $\mathbb{Z}$ or $\mathbb{Q}$ be the set of all positive integers, integers or rational integers, respectively. 
\item [(ii)]For any integers $i$ and $j$, a symbol $i\mid j$ stands for that $i$ divides $j$ and $i\nmid j$ stands for that $i$ does not divide $j$.
\item [(iii)]For any integers $i$ and $j$, a symbol $(i,j)$ stands for the greatest common divisor of $i$ and $j$, and a symbol $[i,j]$ stands for the least common multiple of $i$ and $j$.
\item [(iv)]We denote the\ \Mobius\ function by $\mu$.
\item [(v)]For any set $X$, we denote the identity map on $X$ by $\identitymap{X}$ defined by $\identitymap{X}(x)=x$ for any $x\in X$.
\item [(vi)] We denote the unit element of a finite group by $1$.
\item [(vii)] For each element $g$ of a finite group $G$, we denote by $\circg{g}$ the subgroup of $G$ generated by $g\in G$, and by $O(g)$ the order of $g$. We define the exponent of $G$ by the least common multiple of all $O(g)$'s for $g\in G$.
\item [(viii)] We assume that every ring and semiring have the unit element, written by $1$.
\end{itemize}

\section*{Acknowledgement}
I would like to thank my supervisor, Professor Hiroyuki Ochiai, for his appropriate advice even under such circumstances. Without this advice, I believe that this paper was not completed.

\section{Preliminaries}

In this section, let $G$ be a finite group. First, we define notations on group actions to state main results of this paper. Main references of this section are \cite{MR} and \cite{Sch}.

\subsection{$G$-set}
First, we define $G$-sets. A $G$-set $X$ is defined as a set equipped with a map 
\[ \iota:G\times X\rightarrow X,\quad \iota(g,x)=gx\]
which satisfies
\[ g_1(g_2x)=(g_1g_2)x,\quad 1x=x\]
for any $g_1,g_2\in G$ and $x\in X$.

Note that the empty set $\emptyset$ has a $G$-set structure.

For any $G$-sets $S$ and $T$, the set $T^S$, which is the set of all maps from $S$ to $T$, is also has the $G$-set structure which is defined as
\begin{eqnarray}\label{GHom}
(gf)(s):=gf(g^{-1}s)
\end{eqnarray}
for any $g\in G$, $f:S\rightarrow T$ and $s\in S$.

For any $G$-sets $X$ and $Y$, the disjoint union and the cartesian product of $X$ and $Y$ are again $G$-sets.

Next, we define the notion of the isomorphism as $G$-sets. We call that two $G$-sets $X_1$ and $X_2$ are $G$-isomorphic if there exists a bijective map $f:X_1\rightarrow X_2$ such that $f(gx)=gf(x)$ holds for any $g\in G$ and $x\in X_1$. We call the map $f$ a $G$-isomorphism. 

For any $x\in X$, we denote an orbit of $x$ by $Gx$. A $G$-set $X$ is said to be transitive if $Gx=X$ for some $x\in X$. For example, the left quotient set $G/H$ is a transitive $G$-set for any subgroup $H\subset G$. Moreover, a transitive $G$-sets $X$ is $G$-isomorphic to $G/H$ for some subgroup $H\subset G$.

For transitive $G$-sets, the following proposition holds.
\begin{prop}[\text {\cite[p.111]{Knu}}]\label{Transit}
For any subgroups $H_1$ and $H_2$, two $G$-sets $G/H_1$ and $G/H_2$ are $G$-isomorphism if and only if there exists an element $g\in G$ such that $H_1=gH_2 g^{-1}$ holds. 
\end{prop}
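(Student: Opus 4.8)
The plan is to prove the standard classification of transitive $G$-sets up to isomorphism, namely that $G/H_1 \cong G/H_2$ as $G$-sets if and only if $H_1$ and $H_2$ are conjugate. I would organize the argument as a pair of implications, and since one direction is a short construction while the other rests on comparing stabilizers, I expect to spend most of the effort on the forward direction.

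For the ``if'' direction, suppose $H_1 = gH_2g^{-1}$ for some $g \in G$. I would write down an explicit candidate map $f:G/H_2 \to G/H_1$ and verify it is a well-defined $G$-isomorphism. The natural choice is $f(xH_2) := xg^{-1}H_1$ for $x \in G$. First I would check well-definedness: if $xH_2 = yH_2$, then $y^{-1}x \in H_2$, so $g^{-1}(y^{-1}x)g \in g^{-1}H_2 g = H_1$ (using $H_1 = gH_2g^{-1}$, equivalently $g^{-1}H_1 g = H_2$), and a short rearrangement shows $xg^{-1}H_1 = yg^{-1}H_1$. Then $G$-equivariance $f(h\cdot xH_2) = h\cdot f(xH_2)$ is immediate from the definition since left multiplication by $h$ commutes with the construction, and bijectivity follows because the analogous formula gives a two-sided inverse. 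The only thing to be careful about here is conjugating on the correct side so that the coset computations actually land in $H_1$; this is pure bookkeeping.

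For the ``only if'' direction, suppose $f:G/H_2 \to G/H_1$ is a $G$-isomorphism. The key observation is that a $G$-isomorphism must carry the stabilizer of a point to the stabilizer of its image. Concretely, let $f(H_2) = gH_1$ for some $g \in G$. The stabilizer of the base coset $H_2 \in G/H_2$ is exactly $H_2$, while the stabilizer of $gH_1 \in G/H_1$ is $gH_1g^{-1}$. Since $f$ is $G$-equivariant and bijective, for any $h \in G$ we have $h \cdot H_2 = H_2$ if and only if $h \cdot f(H_2) = f(H_2)$, i.e. the two stabilizers coincide. This yields $H_2 = gH_1g^{-1}$, which is the desired conjugacy relation (rewriting it as $H_1 = g^{-1}H_2 g$ matches the statement after relabelling $g$).

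The main obstacle, such as it is, lies in the forward direction: one must correctly identify the stabilizer of an arbitrary coset $gH_1$ as the conjugate $gH_1g^{-1}$, and then argue cleanly that equivariance of the bijection $f$ forces the stabilizer of a point and of its image to be equal as subgroups of $G$. This is the conceptual heart of the proof, whereas the explicit inverse map and the well-definedness checks in the ``if'' direction are routine once the conjugation side is chosen consistently. I would also remark that Proposition~\ref{Transit} is exactly the classification underlying the Burnside-ring bookkeeping used throughout the paper, so it is worth stating the stabilizer computation as the crux.
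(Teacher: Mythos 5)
The paper gives no proof of Proposition \ref{Transit}; it is simply quoted from Knutson \cite[p.111]{Knu}, so there is no argument in the text to compare yours against line by line. Your overall strategy is the standard one and is sound: an explicit map $xH_2\mapsto xg^{-1}H_1$ for the ``if'' direction, and a comparison of stabilizers for the ``only if'' direction, where the stabilizer of the base coset $H_2$ is $H_2$, the stabilizer of $gH_1$ is $gH_1g^{-1}$, and equivariance together with injectivity of $f$ forces these two subgroups to coincide. The ``only if'' half is correct as written. However, the well-definedness check in the ``if'' half contains a concrete error. From $y^{-1}x\in H_2$ you pass to $g^{-1}(y^{-1}x)g\in g^{-1}H_2g$ and assert $g^{-1}H_2g=H_1$; but the hypothesis $H_1=gH_2g^{-1}$ gives $gH_2g^{-1}=H_1$, whereas $g^{-1}H_2g=H_1$ is equivalent to $H_2=gH_1g^{-1}$, a different and in general false statement. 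Moreover, the element relevant to your map is not $g^{-1}(y^{-1}x)g$ in the first place: since $(yg^{-1})^{-1}(xg^{-1})=g(y^{-1}x)g^{-1}$, the correct computation is $g(y^{-1}x)g^{-1}\in gH_2g^{-1}=H_1$, which immediately yields $xg^{-1}H_1=yg^{-1}H_1$. (The element $g^{-1}(y^{-1}x)g$ would instead be the one to examine for the map $xH_2\mapsto xgH_1$, which is not well defined under your hypothesis.) With that one line repaired the argument is complete; this is precisely the ``conjugating on the correct side'' bookkeeping you yourself flagged as the only delicate point, so treat it as a correctable slip rather than a structural flaw.
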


Let $\Phi(G)$ be the set of conjugacy subgroups of $G$. By Proposition \ref{Transit}, a $G$-set $X$ is isomorphic to some disjoint union of $G$-sets $G/H$ where $H\in\Phi(G)$.

Let $H$ be a subgroup of $G$. We denote a $G$-set $X$ by $\Res{G}{H}(X)$ if we regard $X$ as an $H$-set. For the restriction map $\Res{G}{H}$, the following proposition holds.
\begin{prop}\label{Res}
Let $X$ and $Y$ be $G$-set. Then, two $H$-sets $\Res{G}{H}(X)$ and $\Res{G}{H}(Y)$ are $H$-isomorphic if $X$ and $Y$ are $G$-isomorphic. Moreover, two $H$-sets $\Res{G}{H}(X\cup Y)$ and $\Res{G}{H}(X)\cup\Res{G}{H}(Y)$ are $H$-isomorphic, and two $H$-sets $\Res{G}{H}(X\times Y)$ and $\Res{G}{H}(X)\times\Res{G}{H}(Y)$ are $H$-isomorphic
\end{prop}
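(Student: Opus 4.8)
The plan is to exploit the fact that restriction $\Res{G}{H}$ alters only which group elements are permitted to act, leaving the underlying sets entirely untouched. Concretely, for a $G$-set $Z$ the $H$-set $\Res{G}{H}(Z)$ has the same underlying set as $Z$, with $H$ acting through the restriction of the structure map $\iota$ to $H\times Z$. Every assertion will therefore be established by producing an $H$-equivariant bijection whose underlying map of sets is one already available: the given $G$-isomorphism in the first claim, and the identity map in the remaining two.

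For the first claim I would take a $G$-isomorphism $f:X\rightarrow Y$, so that $f$ is a bijection satisfying $f(gx)=gf(x)$ for all $g\in G$ and $x\in X$. Since $H\subseteq G$, this equivariance identity holds in particular for every $g\in H$, which is exactly the condition for $f$ to be an $H$-isomorphism from $\Res{G}{H}(X)$ to $\Res{G}{H}(Y)$. Thus the very same map $f$ serves as the required $H$-isomorphism, with no further verification needed.

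For the two structural claims I would use the identity map on the common underlying set. The $G$-action on $X\cup Y$ acts componentwise, while the $G$-action on $X\times Y$ acts diagonally by $g(x,y)=(gx,gy)$; restricting either action to $H$ yields precisely the disjoint-union, respectively product, $H$-action assembled from $\Res{G}{H}(X)$ and $\Res{G}{H}(Y)$. Hence $\identitymap{X\cup Y}$ and $\identitymap{X\times Y}$ are $H$-equivariant bijections, delivering the two asserted $H$-isomorphisms.

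The only substantive point—and it is a very mild one—is the inclusion $H\subseteq G$, from which equivariance under the larger group immediately forces equivariance under the smaller. I do not expect any genuine obstacle, since $\Res{G}{H}$ is a forgetful operation that commutes tautologically with the underlying-set constructions of disjoint union and Cartesian product.
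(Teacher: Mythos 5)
Your proposal is correct and follows essentially the same route as the paper: the given $G$-isomorphism is reused verbatim as an $H$-isomorphism (equivariance for all of $G$ restricts to equivariance for $H$), and the identity maps on $X\cup Y$ and $X\times Y$ supply the two structural isomorphisms. Your write-up is in fact slightly more careful than the paper's, which states the same facts without spelling out the restriction-of-equivariance observation.
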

\begin{proof}
If $X$ and $Y$ are $G$-isomorphic, then there exists $G$-isomorphism $\iota:X\rightarrow Y$. This map is also $H$-isomorphism, and hence, two $H$-sets $\Res{G}{H}(X)$ and $\Res{G}{H}(Y)$ are $H$-isomorphic.

Next, we consider the set $X\cup Y$ and $X\times Y$. The identity map on $X\cup Y$ and $X\times Y$ are $H$-isomorphic, then two $H$-sets $\Res{G}{H}(X\cup Y)$ and $\Res{G}{H}(X)\cup\Res{G}{H}(Y)$ are $H$-isomorphic and two $H$-sets $\Res{G}{H}(X)$ and $\Res{G}{H}(Y)$ are $H$-isomorphic.
\end{proof}

The following propositions will be used in \S 3.4 and \S 4.1.
%
\begin{prop}[\text {\cite[p.398]{Oh2}}]\label{TransM}
For any subgroups $K_1$ and $K_2$ of $G$, the cartesian product of two transitive $G$-sets $G/K_1$ and $G/K_2$ is $G$-isomorphic to
\[ \bigcup_{K_1 gK_2}G/K_1\cap gK_2 g^{-1} \]
where $g$ ranges over a set of double coset representatives of $K_1$ and $K_2$ in $G$.
\end{prop}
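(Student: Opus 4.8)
The plan is to realize the right-hand side as the orbit decomposition of the cartesian product $G/K_1\times G/K_2$ under the diagonal $G$-action $g\cdot(aK_1,bK_2)=(gaK_1,gbK_2)$. Since any $G$-set is the disjoint union of its orbits and each orbit is a transitive $G$-set, hence $G$-isomorphic to $G/\mathrm{Stab}$ for the stabilizer of any of its points, it suffices to enumerate the orbits of this product and compute one stabilizer per orbit. The classification of transitive $G$-sets recorded before the statement (each is $G$-isomorphic to some $G/H$) is exactly what lets me pass from ``orbit'' to ``$G/\mathrm{Stab}$''.

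First I would normalize representatives. Acting by $a^{-1}$ carries $(aK_1,bK_2)$ to $(K_1,a^{-1}bK_2)$, so every orbit meets the set of points of the form $(K_1,gK_2)$ with $g\in G$. Next I would decide when two such points lie in the same orbit: an element $h\in G$ sends $(K_1,gK_2)$ to $(K_1,g'K_2)$ exactly when $hK_1=K_1$ and $hgK_2=g'K_2$, that is, when $h\in K_1$ and $g'\in hgK_2$. A short check shows this is possible precisely when $g'\in K_1gK_2$. Consequently the rule sending the orbit of $(K_1,gK_2)$ to the double coset $K_1gK_2$ is a bijection between the set of $G$-orbits of $G/K_1\times G/K_2$ and the double coset space $K_1\backslash G/K_2$, so letting $g$ run over a chosen set of double coset representatives selects exactly one point in each orbit.

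It then remains to compute the stabilizer of the base point $(K_1,gK_2)$. An element $h$ fixes it iff $h\in K_1$ and $hgK_2=gK_2$, the latter meaning $g^{-1}hg\in K_2$, i.e.\ $h\in gK_2g^{-1}$; thus the stabilizer is $K_1\cap gK_2g^{-1}$. The orbit–stabilizer correspondence $h(K_1\cap gK_2g^{-1})\mapsto h\cdot(K_1,gK_2)$ is a $G$-isomorphism from $G/(K_1\cap gK_2g^{-1})$ onto that orbit. Assembling these isomorphisms over the double coset representatives $g$ produces a $G$-isomorphism from $G/K_1\times G/K_2$ onto $\bigcup_{K_1gK_2}G/(K_1\cap gK_2g^{-1})$, which is the asserted decomposition.

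The main obstacle I anticipate is the well-definedness of the orbit-to-double-coset correspondence in the second step: I must verify both that the double coset $K_1gK_2$ is independent of which normalized representative $(K_1,gK_2)$ of a given orbit I pick, and that distinct double cosets yield disjoint orbits. Both of these reduce to the single equivalence ``$(K_1,gK_2)$ and $(K_1,g'K_2)$ are in the same orbit iff $K_1gK_2=K_1g'K_2$'' established above, so once that equivalence is proved carefully the stabilizer computation and the final reassembly are routine.
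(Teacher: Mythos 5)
Your argument is correct and complete: the orbit decomposition of $G/K_1\times G/K_2$ under the diagonal action, the bijection between orbits and double cosets $K_1\backslash G/K_2$ via the normalized representatives $(K_1,gK_2)$, and the stabilizer computation $K_1\cap gK_2g^{-1}$ together give exactly the asserted isomorphism, and the well-definedness issue you flag is indeed settled by the equivalence ``same orbit iff same double coset.'' Note that the paper offers no proof of its own here --- it cites the result from \cite[p.398]{Oh2} --- so there is nothing to compare against, but your proof is the standard one and fills that gap correctly.
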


\begin{prop}[\text {\cite[p.400]{Oh2}}]\label{TransR}
For any subgroups $H$ and $K$ of $G$, the $H$-set $\Res{G}{H}(G/K)$ is $H$-isomorphic to
\[ \bigcup_{HgK}H/H\cap gK g^{-1} \]
where $g$ ranges over a set of double coset representatives of $H$ and $K$ in $G$.
\end{prop}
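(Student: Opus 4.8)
The plan is to decompose the $H$-set $\Res{G}{H}(G/K)$ into its $H$-orbits and then identify each orbit through the orbit--stabilizer correspondence. The underlying set consists of the left cosets $gK$, on which $H$ acts by left multiplication $h\cdot(gK)=(hg)K$. Since $G/K$ is transitive as a $G$-set but need not be transitive after restriction to $H$, the first task is to determine exactly when two cosets lie in the same $H$-orbit.

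First I would show that $g_1K$ and $g_2K$ belong to the same $H$-orbit if and only if $Hg_1K=Hg_2K$. Indeed, $hg_1K=g_2K$ for some $h\in H$ is equivalent to $g_2^{-1}hg_1\in K$, which is precisely the condition that $g_1$ and $g_2$ represent the same double coset of $H$ and $K$. Hence the $H$-orbits on $\Res{G}{H}(G/K)$ are in natural bijection with the double cosets $H\backslash G/K$, and choosing one representative $g$ for each double coset decomposes $\Res{G}{H}(G/K)$ into the disjoint union of the orbits $H\cdot(gK)$.

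Next, for a fixed representative $g$ I would compute the stabilizer in $H$ of the point $gK$. An element $h\in H$ fixes $gK$ precisely when $hgK=gK$, i.e. when $g^{-1}hg\in K$, i.e. when $h\in gKg^{-1}$; therefore the stabilizer equals $H\cap gKg^{-1}$. Applying the orbit--stabilizer theorem to this $H$-action, the assignment $h(H\cap gKg^{-1})\mapsto hgK$ is a well-defined $H$-isomorphism from $H/H\cap gKg^{-1}$ onto the orbit $H\cdot(gK)$. Collecting these isomorphisms over a full set of double coset representatives $g$ then produces exactly the claimed $H$-isomorphism with $\bigcup_{HgK}H/H\cap gKg^{-1}$.

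The argument is entirely standard, so the only points needing genuine care are bookkeeping ones: verifying that the orbit-to-double-coset correspondence is independent of the chosen coset and double-coset representatives, and confirming that the orbit--stabilizer map is truly $H$-equivariant rather than merely a bijection of underlying sets. I expect no substantive obstacle beyond checking these well-definedness and equivariance details; note also that this proposition is the restriction-side analogue of Proposition \ref{TransM}, and the two are proved by parallel orbit-counting reasoning.
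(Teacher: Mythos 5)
Your argument is correct and is the standard one: the $H$-orbits of $\Res{G}{H}(G/K)$ correspond to the double cosets $HgK$, the stabilizer of $gK$ in $H$ is $H\cap gKg^{-1}$, and the orbit--stabilizer correspondence assembles these into the claimed $H$-isomorphism. The paper itself gives no proof of this proposition --- it is quoted directly from \cite[p.400]{Oh2} --- so there is nothing to compare against; your writeup supplies exactly the argument one would expect, and the well-definedness and equivariance checks you flag are indeed the only details requiring care.
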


\begin{rmk}
Note that Proposition \ref{TransM} and \ref{TransR} were discussed when $G$ is not a finite group, however subgroups $H$ and $K$ satisfy that $|G/H|$ and $|G/K|$ is finite in \cite{Oh2}. For all subgroups $H$ of finite groups $G$ satisfies $|G/H|<\infty$.
\end{rmk}

\subsection{Calculation of $\mu_H(X)$}
We denote by $\mu_H(X)$ the number of orbits in the $G$-set $X$ which is $G$-isomorphic to $G/H$. We discuss a method of calculating $\mu_H(X)$ for any finite $G$-set $X$ and subgroup $H$ of $G$ with super characters, which was introduced by Knutson \cite{Knu}.

The set of all $G$-sets has an equivalent relation which is defined by $G$-isomorphisms. We denote an equivalent class of a finite $G$-set $X$ by $[X]$. 

We define $M'(G)$ by a set of all equivalence classes $[X]$. It has a semiring structure whose addition is defined via disjoint union and whose multiplication is defined via cartesian product. The zero element of $M'(G)$ is $[\emptyset]$, and the unit element $1$ of $M'(G)$ is the set whose cardinality is $1$.

We define the Burnside ring by the ring completion of $M'(G)$, written by $B(G)$. The Burnside ring $B(G)$ has a commutative ring structure (see Appendix A.6). The Burnside ring $B(G)$ has a $\mathbb{Z}$-basis $\{[G/H] \mid H\in\Phi(G) \}$.

For any $H\in \Phi(G)$, there exists an additive homomorphism $\mu_H:B(G)\rightarrow\mathbb{Z}$ such that $\mu_H([X])=\mu_H(X)$ holds for any finite $G$-set $X$. Then, we have
\[ [X]=\sum_{H\in\Phi(G)}\mu_{H}([X])[G/H] \]
for any finite $G$-set $X$. 

For any finite $G$-set $X$ and subgroup $H$ of $G$, we define a set $X_H$ by the set of $x\in X$ such that $hx=x$ holds for any $h\in H$. In addition, we define $\varphi_H(X)$ by the cardinality of $X_H$. Note that we have $\varphi_H(\emptyset)=0$ and $\varphi_H(1)=1$ for any subgroup $H$.

\begin{rmk}
Let $\chi$ be the permutation character associate with a finite $G$-set $X$. Then, we have $\varphi_{\circg{g}}(X)=\chi(g)$.
\end{rmk}

\begin{prop}\label{super}
The followings hold for any finite $G$-sets $S,T,X,Y$ and subgroups $H, H_1, H_2$.
\begin{itemize}
\item[{\rm(1)}] $\varphi_H(S)=\varphi_H(T)$ if $S$ and $T$ are $G$-isomorphism.
\item[{\rm(2)}] $\varphi_{H_1}(X)=\varphi_{H_2}(X)$ if $H_1$ and $H_2$ are conjugate.
\item[{\rm(3)}] $\varphi_H(X\cup Y)=\varphi_H(X)+\varphi_H(Y)$.
\item[{\rm(4)}] $\varphi_H(X\times Y)=\varphi_H(X)\varphi_H(Y)$.
\end{itemize}
\end{prop}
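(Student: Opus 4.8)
The plan is to reduce every one of the four assertions to an elementary statement about the fixed-point set $X_H=\{x\in X\mid hx=x \text{ for all } h\in H\}$, since $\varphi_H(X)$ is by definition the cardinality of $X_H$. Each claim then becomes either an explicit bijection between fixed-point sets or a set-theoretic identity among them, after which I simply pass to cardinalities. This keeps the whole argument at the level of routine verifications, and no deeper machinery is needed.

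For (1), I would take a $G$-isomorphism $f\colon S\rightarrow T$ and check that it restricts to a bijection $S_H\rightarrow T_H$. Indeed, if $s\in S_H$, then for every $h\in H$ we have $f(s)=f(hs)=hf(s)$, using that $f$ is a $G$-map, so $f(s)\in T_H$; the inverse $f^{-1}$ is again a $G$-map and carries $T_H$ into $S_H$. Hence $|S_H|=|T_H|$, i.e.\ $\varphi_H(S)=\varphi_H(T)$. For (2), I would fix the convention $H_2=gH_1g^{-1}$ for some $g\in G$ and show that multiplication by $g$ gives a bijection $X_{H_1}\rightarrow X_{H_2}$: if $h_1x=x$ for all $h_1\in H_1$, then for any $h_2=gh_1g^{-1}\in H_2$ one computes $h_2(gx)=gh_1g^{-1}gx=gh_1x=gx$, so $gx\in X_{H_2}$, and multiplication by $g^{-1}$ supplies the inverse map. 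Taking cardinalities yields $\varphi_{H_1}(X)=\varphi_{H_2}(X)$.

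For (3) and (4), the relevant identities are $(X\cup Y)_H=X_H\cup Y_H$ as a disjoint union and $(X\times Y)_H=X_H\times Y_H$, both of which follow at once from the componentwise description of the $G$-action on a disjoint union and on a cartesian product: an element of $X\cup Y$ is $H$-fixed exactly when it is an $H$-fixed point of the summand containing it, and a pair $(x,y)$ satisfies $h(x,y)=(hx,hy)=(x,y)$ for all $h\in H$ precisely when $x\in X_H$ and $y\in Y_H$. Passing to cardinalities then gives the additive formula $\varphi_H(X\cup Y)=\varphi_H(X)+\varphi_H(Y)$ and the multiplicative formula $\varphi_H(X\times Y)=\varphi_H(X)\varphi_H(Y)$.

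There is essentially no substantial obstacle here; each part is a short check and the four statements are independent of one another. The only point demanding any care is the bookkeeping in (2): one must settle on whether $H_2=gH_1g^{-1}$ or $H_2=g^{-1}H_1g$ and then keep the choice of $g$ versus $g^{-1}$ consistent throughout, so that the claimed map genuinely lands in the intended fixed-point set. I would therefore state the conjugation convention explicitly at the start of that case and verify the inclusion in both directions before concluding that the map is a bijection.
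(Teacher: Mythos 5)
Your proof is correct and follows essentially the same route as the paper: parts (1) and (2) are handled by exactly the same bijections between fixed-point sets (restricting the $G$-isomorphism, respectively multiplying by the conjugating element $g$), and your attention to the conjugation convention in (2) matches the paper's computation with $H_1=g^{-1}H_2g$. The only difference is that for (3) and (4) the paper simply cites Knutson, whereas you supply the short direct verifications $(X\cup Y)_H=X_H\cup Y_H$ and $(X\times Y)_H=X_H\times Y_H$; these are correct and make the argument self-contained.
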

\begin{proof}
Identities (3) and (4) were proved in \cite[p.111]{Knu}. First, we prove the identity (1). 

Let $f:S\rightarrow T$ be a $G$-isomorphism. We show $f(S_H)=T_H$. For any $s\in S_H$, we have $hf(s)=f(hs)=f(s)$. Thus, we have $f(s)\in T_H$. Conversely, for any $t\in T_H$ we put $s=f^{-1}(t)$. Then $hs=hf^{-1}(t)=f^{-1}(ht)=f^{-1}(t)=s$ holds for any $h\in H$. Thus the element $s$ belongs to $S_H$. Hence, we have $f(S_H)=T_H$, that is, $\varphi_H(S)=\varphi_H(T)$ holds.

Next, we prove the identity (2). Take $g\in G$ which satisfies $H_1=g^{-1}H_2g$. Let $\iota:X\rightarrow X$ be the map defined by $\iota(x)=gx$ for any $x\in X$. The map $\iota$ is bijective. We show $\iota(X_{H_1})=X_{H_2}$. For any $x\in X_{H_1}$, we have $h_2\iota(x)=h_2gx=gh_1x=gx=\iota(x)$ for any $h_2\in H_2$ where $h_1\in H_1$ satisfies $gh_1=h_2g$. Thus, we have $\iota(x)\in X_{H_2}$. Conversely, for any $y\in X_{H_2}$, we put $x=\iota^{-1}(y)$. Then, $\iota(h_1x)=gh_1x=h_2gx=h_2\iota(x)=\iota(x)$ for any $h_1\in H_1$ where $h_2\in H_2$ satisfies $gh_1=h_2g$. Thus, we have $x\in X_{H_1}$, and hence, we have $\varphi_{H_1}(X)=\varphi_{H_2}(X)$.
\end{proof} 
Let $SCF(G)$ be the set of all maps from $\Phi(G)$ to $\mathbb{C}$. The set $SCF(G)$ has a commutative ring structure with the following operations.
\begin{eqnarray*}
(f_1+f_2)(H)&:=&f_1(H)+f_2(H),\\
(f_1 f_2)(H)&:=&f_1(H)f_2(H)
\end{eqnarray*}
where $f_1, f_2\in SCF(G)$ and $H\in\Phi(G)$. In \cite[p.110]{Knu}, an element in $SCF(G)$ is called a super central function on $G$. The zero element $0_{SFC(G)}$ is the map defined by $0_{SCF(G)}(H)=0$ for any $H\in\Phi(G)$ and the unit element of $1_{SFC(G)}$ is the map defined by $1_{SCF(G)}(H)=1$ for any $H\in\Phi(G)$.

By Proposition \ref{super}, there exists a ring homomorphism $\varphi:B(G)\rightarrow SCF(G)$ such that $\varphi([X])(H)=\varphi_H(X)$ holds for any finite $G$-set $X$. If $X$ is a finite $G$-set, then the map $\varphi([X])$ is called the super character of $G$-set $X$. In addition, the following theorem holds.

\begin{theo}\label{super1}

For any $H,V\in\Phi(G)$ satisfying $gHg^{-1}\subset V$ for some $g\in G$, there exists a unique rational integer $a_{H,V}$ such that 
\[ \mu_H(\alpha)=\sum_{H\lesssim V}a_{H,V}\varphi(\alpha)(V)\]
holds for any $\alpha\in B(G)$ where the notation $H\lesssim V$ means that there exists $g\in G$ such that $g^{-1}Hg\subset V$ holds. In particular, the map $\varphi$ is injective.
\end{theo}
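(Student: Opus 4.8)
The plan is to recognize the displayed relation as the inversion of the \emph{table of marks}. For $V,K\in\Phi(G)$ set $m_{V,K}:=\varphi_V([G/K])=|(G/K)_V|$, so that $M=(m_{V,K})$ is the matrix recording the super character values of the basis elements $[G/K]$. Since $\varphi$ is additive and $\alpha=\sum_{K\in\Phi(G)}\mu_K(\alpha)[G/K]$, evaluating $\varphi(\alpha)$ at $V$ gives
\[ \varphi(\alpha)(V)=\sum_{K\in\Phi(G)}\mu_K(\alpha)\,m_{V,K}, \]
which in matrix form reads $\varphi(\alpha)=M\mu(\alpha)$, where $\mu(\alpha)=(\mu_K(\alpha))_K$ and $\varphi(\alpha)=(\varphi(\alpha)(V))_V$ are column vectors indexed by $\Phi(G)$. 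Thus the entire statement reduces to showing that $M$ is invertible over $\mathbb{Q}$ and that $M^{-1}$ has the prescribed support; one then sets $a_{H,V}:=(M^{-1})_{H,V}$ and reads off $\mu_H(\alpha)=\sum_V a_{H,V}\varphi(\alpha)(V)$ from $\mu(\alpha)=M^{-1}\varphi(\alpha)$.

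First I would compute $m_{V,K}$ explicitly. A coset $gK$ lies in $(G/K)_V$ iff $vgK=gK$ for all $v\in V$, i.e. iff $g^{-1}Vg\subset K$; hence $m_{V,K}=|\{gK:g^{-1}Vg\subset K\}|$. In particular $m_{V,K}=0$ unless $V\lesssim K$, and the diagonal entry $m_{K,K}$ counts the cosets $gK$ with $g^{-1}Kg=K$, so it equals the positive integer $|N_G(K)/K|$, where $N_G(K)=\{g\in G:g^{-1}Kg=K\}$. Hence $M$ is triangular with respect to $\lesssim$ with nowhere-zero diagonal. Because $\lesssim$ is antisymmetric on $\Phi(G)$ (if $H\lesssim V$ and $V\lesssim H$ then $|H|=|V|$ and $H,V$ are conjugate, hence equal in $\Phi(G)$), it is a genuine partial order; choosing any linear extension turns $M$ into an honest triangular matrix, so $\det M=\prod_K|N_G(K)/K|\neq 0$ and $M$ is invertible over $\mathbb{Q}$. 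This already gives existence and uniqueness of the rationals $a_{H,V}=(M^{-1})_{H,V}$, since $\{[G/K]\}$ is a $\mathbb{Z}$-basis and the defining system $\sum_V a_{H,V}m_{V,K}=\delta_{H,K}$ is equivalent to $M^{-1}M=\mathrm{id}$.

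The point requiring care is the sharp support condition $a_{H,V}=0$ unless $H\lesssim V$. Triangularity with respect to a mere linear extension only forces $a_{H,V}$ to vanish when $V$ precedes $H$ in that extension, which is weaker than $H\lesssim V$. To recover the precise support I would argue inside the incidence algebra of the poset $(\Phi(G),\lesssim)$: the matrices supported on pairs with $V\lesssim K$ are closed under multiplication (by transitivity of $\lesssim$), contain the identity $\delta$, and any such matrix with invertible diagonal has its inverse again in the algebra. Applying this to $M$ shows directly that $(M^{-1})_{H,V}=0$ whenever $H\not\lesssim V$, so the sum may indeed be restricted to $H\lesssim V$. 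Equivalently, one can solve $\sum_V a_{H,V}m_{V,K}=\delta_{H,K}$ by descending induction along $\lesssim$ and verify that the solution never spills outside $\{H\lesssim V\}$. I expect this incidence-algebra step, together with the fixed-point computation underlying the triangularity, to be the only nonroutine parts; everything else is linear algebra over $\mathbb{Q}$.

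Finally, injectivity of $\varphi$ is immediate from the formula: if $\varphi(\alpha)=0$ then $\varphi(\alpha)(V)=0$ for every $V\in\Phi(G)$, whence $\mu_H(\alpha)=\sum_{H\lesssim V}a_{H,V}\varphi(\alpha)(V)=0$ for all $H\in\Phi(G)$, and therefore $\alpha=\sum_{H\in\Phi(G)}\mu_H(\alpha)[G/H]=0$.
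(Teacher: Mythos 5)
Your proposal is correct and follows essentially the same route as the paper: both rest on the vanishing $\varphi_{H_1}(G/H_2)=0$ unless $H_1\lesssim H_2$ (Lemma \ref{super2}) together with the nonvanishing of the diagonal marks, and both obtain the coefficients $a_{H,V}$ by inverting the resulting triangular system of the table of marks. The paper carries out this inversion as an explicit double induction, on $|G/H|$ for existence and the support condition and on $|V/H|$ for uniqueness (substituting $\alpha=[G/H]$ and $\alpha=[G/V]$), which is precisely the ``descending induction along $\lesssim$'' you mention as the equivalent of your incidence-algebra step.
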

Hence, if we know $\varphi_H(X)$ for all $H\in\Phi(G)$ and rational numbers $a_{H,V}$ for all $H,V\in\Phi(G)$, then we can calculate $\mu_H(X)$ for all $H\in\Phi(G)$.

To prove this theorem, we use the following lemma.
\begin{lemm}\label{super2}{\rm\cite[p.111]{Knu}}
Let $H_1$ and $H_2$ be subgroups of $G$. Then, one has $\varphi_{H_1}(G/H_2)\neq 0$ holds if and only if there exists an element $g\in G$ such that $g^{-1}H_1g\subset H_2$ holds.
\end{lemm}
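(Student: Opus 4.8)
The plan is to reduce the nonvanishing of $\varphi_{H_1}(G/H_2)$ to the existence of a single $H_1$-fixed coset, and then to identify exactly when a coset is fixed. Recall that $G/H_2$ carries the $G$-action by left translation, so that $g'\cdot(gH_2)=(g'g)H_2$, and that $\varphi_{H_1}(G/H_2)$ is by definition the cardinality of the fixed-point set $(G/H_2)_{H_1}=\{gH_2\mid hgH_2=gH_2\text{ for all }h\in H_1\}$. Hence $\varphi_{H_1}(G/H_2)\neq 0$ holds precisely when this fixed-point set is non-empty.

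The key step is to characterize when a given coset $gH_2$ is fixed by $H_1$. For a fixed $g\in G$ I would observe that $hgH_2=gH_2$ is equivalent to $g^{-1}hg\in H_2$; since this must hold for every $h\in H_1$, the coset $gH_2$ lies in $(G/H_2)_{H_1}$ if and only if $g^{-1}H_1g\subset H_2$. This is a direct manipulation of the coset condition and presents no real difficulty.

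Combining the two observations finishes the argument in both directions at once. If $\varphi_{H_1}(G/H_2)\neq 0$, then some coset $gH_2$ is $H_1$-fixed, and the characterization yields $g^{-1}H_1g\subset H_2$. Conversely, if $g^{-1}H_1g\subset H_2$ for some $g\in G$, then the same characterization shows $gH_2\in(G/H_2)_{H_1}$, so the fixed-point set is non-empty and $\varphi_{H_1}(G/H_2)\neq 0$.

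Since the entire content lies in the one-line coset computation, I do not anticipate any genuine obstacle; the only point requiring a little care is to keep track of the direction of conjugation, making sure the condition comes out as $g^{-1}H_1g\subset H_2$ rather than $gH_1g^{-1}\subset H_2$, which depends on using the left action $g'\cdot(gH_2)=(g'g)H_2$ consistently.
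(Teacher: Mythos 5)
Your argument is correct and complete: the identification of $(G/H_2)_{H_1}$ with the set of cosets $gH_2$ satisfying $g^{-1}H_1g\subset H_2$ is exactly the content of the lemma, and your coset computation $hgH_2=gH_2\iff g^{-1}hg\in H_2$ is the right one, with the conjugation in the correct direction for the left-translation action. The paper itself gives no proof here, citing Knutson \cite[p.111]{Knu} instead, and your argument is precisely the standard one that reference supplies, so there is nothing to compare beyond noting that you have filled in the omitted details correctly.
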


\begin{proof}[Proof of Theorem \ref{super1}]
We prove this theorem by the induction on $|G/H|$. Recall that we have
\begin{eqnarray}\label{super111}
\varphi(\alpha)(H)=\sum_{V\in\Phi(G)}\mu_{V}(\alpha)\varphi_H(G/V)
\end{eqnarray}
for any $\alpha\in B(G)$.

First, we consider $H=G$. By (\ref{super111}) and Lemma \ref{super2}, we have $\mu_G(\alpha)=\varphi(\alpha)(G)=0$.

Next, we assume that this theorem holds for any $H\in\Phi(G)$ such that $|G/H|< n$ holds. Let $\alpha$ be an element of $B(G)$. By (\ref{super111}) and Lemma \ref{super2}, we have
\[ \varphi(\alpha)(H)=\sum_{V: H\lesssim V}\mu_{V}(\alpha)\varphi_H(G/V).\]
Thus,
\begin{eqnarray}\label{super112}
\mu_{H}(\alpha)=\dfrac{1}{\varphi_H(G/H)}\Big(\varphi(\alpha)(H)-\sum_{V: H\lesssim V, H\ncong V}\mu_{V}(\alpha)\varphi_H(G/V)\Big)
\end{eqnarray}
holds where $H\ncong V$ means that $H$ and $V$ are not conjugate. By (\ref{super112}) and the induction hypothesis, we can write 
\[\mu_H(\alpha)=\sum_{H\lesssim V}a_{H,V}\varphi(\alpha)(V)\]
for some rational numbers $a_{H,V}$'s.

Next, for any $V\in\Phi(G)$ satisfying $H\lesssim V$, let $a_{H,V}$ and $a'_{H,V}$ be rational numbers satisfying 
\begin{eqnarray}\label{super113}
\mu_{H}(\alpha)=\sum_{H\lesssim V}a_{H,V}\varphi(\alpha)(V)=\sum_{H\lesssim V}a'_{H,V}\varphi(\alpha)(V).
\end{eqnarray}
We show $a_{H,V}=a'_{H,V}$ by the induction of $|V/H|$. We substitute $\alpha=[G/H]$ in (\ref{super113}), then $1=a_{H,H}=a'_{H,H}$ holds. If $a_{H,V_1}=a'_{H,V_1}$ holds for any $V_1\in\Phi(G)$ such that $|V_1/H|<|V/H|$, then we substitute $\alpha=[G/V]$ in (\ref{super113}), thus we have
\[0=\sum_{H\lesssim V_1\lesssim V}a_{H,V_1}\varphi([G/V])(V_1)=\sum_{H\lesssim V_1\lesssim V}a'_{H,V_1}\varphi([G/V])(V_1).\]
Hence, we have $a_{H,V}=a'_{H,V}$ by the induction hypothesis. 

That is, this theorem holds.

\end{proof}

From the proof of Theorem \ref{super1}, we can calculate rational numbers $a_{H,V}$'s inductively. Moreover, there exists cases where calculation can be performed more effectively. For example, if $G$ is a cyclic group, we can use \Mobius\ inversion formula to calculate $a_{H,V}$. In the following example, we consider the case where $G$ is a cyclic group.

\begin{exa}\label{EXcr}
Let $n\geq 1$ be an integer and let $C_n$ be the cyclic group whose cardinality is $n$. We consider when $G=C_n$.

First, we investigate all elements of $\Phi(C_n)$. Note that the set of all subgroups of $C_n$ is $\{\circg{g^d}\mid d\ \mbox{divides}\ n\}$ where $g\in C_{n}$ is a generator of $C_n$. For any two distinct divisors $d_1$ and $d_2$ of $n$, we have $\circg{g^{d_1}}\neq \circg{g^{d_2}}$. Thus, for each divisor $d$ of $n$, there exists the subgroup uniquely whose cardinality is $n/d$. 

Moreover, the set of all subgroups of $C_n$ is $\Phi(C_n)$ since $C_n $ is an abelian group.

Now, we denote $\circg{g^{d}}$ by $C_{n/d}$. For any subgroups $C_{n/d}$ and $C_{n/d'}$ in $\Phi(C_n)$, we have
\begin{eqnarray}\label{Cn0}
 [\Res{C_n}{C_{n/d'}}(C_n /C_{n/d})]=(d,d')[C_{n/d'}/C_{n/[d,d']}].
\end{eqnarray}
In particular, we have
\begin{eqnarray}\label{Cn00}
 \varphi_{C_{n/d'}}(C_n/C_d)=\begin{cases}d' & \mbox{if}\ d'\mid d,\\ 0 & \mbox{if}\ d'\nmid d.\end{cases}
\end{eqnarray}
Hence, for any $C_n$-set $X$ and divisor $d$ of $r$, we have
\begin{eqnarray}\label{Cn1}
\varphi_{C_{n/d}}(X)=\sum_{d'\mid n}\mu_{C_{n/d'}}(X)\varphi_{C_{n/d}}(C_n/C_{n/d'})=\sum_{d'\mid d}d'\mu_{C_{n/d'}}(X).
\end{eqnarray}
By Theorem \ref{super1}, all $\mu_{C_{n/d}}(X)$'s are determined uniquely from all $\varphi_{C_{n/d}}(X)$'s. In fact, by the \Mobius\ inversion formula with the identity (\ref{Cn1}), we have
\begin{eqnarray}\label{Cn2}
 \mu_{C_{n/d}}(X)=\dfrac{1}{d}\sum_{d'\mid d}\mu\Big(\dfrac{d}{d'}\Big)\varphi_{C_{n/d'}}(X).
\end{eqnarray}
\end{exa}

Next, we consider the cartesian product of two finite $G$-sets $X$ and $Y$.

\begin{prop}\label{transtrans3}
Let $V_1, V_2, W_1, W_2, H_1,H_2$ be subgroups of $G$ and $V_1$, $V_2$ or $H_1$ is conjugate to $W_1$, $W_2$ or $H_2$, respectively. Then, we have $\mu_{H_1}(G/V_1\times G/V_2)=\mu_{H_2}(G/W_1\times G/W_2)$.
\end{prop}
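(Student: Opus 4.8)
The plan is to reduce the statement to two facts already available in the preliminaries: that $\mu_H$ depends only on the conjugacy class of the subgroup $H$, and that it factors through $G$-isomorphism classes of finite $G$-sets. Once both are in place the proposition follows by chaining the resulting equalities.

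First I would record that for conjugate subgroups $H_1$ and $H_2$ one has $\mu_{H_1}=\mu_{H_2}$ as functions on $B(G)$. Indeed, $\mu_H(X)$ is by definition the number of orbits of $X$ that are $G$-isomorphic to $G/H$; by Proposition \ref{Transit}, the conjugacy of $H_1$ and $H_2$ gives a $G$-isomorphism $G/H_1\cong G/H_2$, so the two orbit counts coincide for every finite $G$-set $X$, and hence $\mu_{H_1}(\alpha)=\mu_{H_2}(\alpha)$ for all $\alpha\in B(G)$ by additivity.

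Next I would handle the arguments of $\mu$. Since $V_1$ is conjugate to $W_1$ and $V_2$ is conjugate to $W_2$, Proposition \ref{Transit} again yields $G$-isomorphisms $f_1:G/V_1\to G/W_1$ and $f_2:G/V_2\to G/W_2$. The product map $f_1\times f_2$ is then a $G$-isomorphism $G/V_1\times G/V_2\to G/W_1\times G/W_2$, since $(f_1\times f_2)(g(x,y))=(f_1(gx),f_2(gy))=(gf_1(x),gf_2(y))=g\,(f_1\times f_2)(x,y)$. Therefore $[G/V_1\times G/V_2]=[G/W_1\times G/W_2]$ in $B(G)$, and because $\mu_{H_1}$ is well defined on equivalence classes we obtain $\mu_{H_1}(G/V_1\times G/V_2)=\mu_{H_1}(G/W_1\times G/W_2)$.

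Combining the two steps gives $\mu_{H_1}(G/V_1\times G/V_2)=\mu_{H_1}(G/W_1\times G/W_2)=\mu_{H_2}(G/W_1\times G/W_2)$, which is the claim. I expect no serious obstacle here; the only point needing a line of care is the verification that the product of two $G$-isomorphisms is again a $G$-isomorphism, carried out above. Alternatively one could make everything explicit through Proposition \ref{TransM}, writing each product as a disjoint union of terms $G/(K_1\cap gK_2g^{-1})$ over double coset representatives and matching the two decompositions term by term, but the abstract isomorphism argument is shorter and avoids the double-coset bookkeeping.
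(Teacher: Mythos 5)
Your proposal is correct and follows essentially the same route as the paper's own proof: both deduce $G/V_1\times G/V_2\cong G/W_1\times G/W_2$ from Proposition \ref{Transit} and then use that conjugate subgroups $H_1,H_2$ give the same orbit-counting function $\mu_{H_1}=\mu_{H_2}$. The only difference is that you spell out the (easy) verification that the product of two $G$-isomorphisms is again a $G$-isomorphism, which the paper leaves implicit.
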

\begin{proof}
Since $V_1$ is conjugate to $W_1$ and $V_2$ is conjugate $W_2$, the $G$-set $G/V_1\times G/V_2$ is $G$-isomorphic to $G/W_1\times G/W_2$ by Proposition \ref{Transit}. Moreover, since $H_1$ is conjugate to $H_2$, two maps $\mu_{H_1}$ is equal to $\mu_{H_2}$. Thus, we have $\mu_{H_1}(G/V_1\times G/V_2)=\mu_{H_2}(G/W_1\times G/W_2)$.
\end{proof}
\begin{defi}\label{transtransMD}
For any $V_1, V_2, H\in\Phi(G)$. we define the number $b_{V_1,V_2}(H)$ by $\mu_{H}(G/V_1\times G/V_2)$. By Proposition \ref{transtrans3}, this definition is well-defined.
\end{defi}
\begin{prop}\label{transtransM}
For any finite $G$-sets $X$ and $Y$ and $H\in\Phi(G)$, we have
\[ \mu_{H}(X\times Y)=\sum_{V_1,V_2\in\Phi(G)}b_{V_1,V_2}(H)\mu_{V_1}(X)\mu_{V_2}(Y)\]
\end{prop}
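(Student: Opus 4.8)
The plan is to reduce everything to the ring structure of the Burnside ring $B(G)$ and to exploit the additivity of the homomorphism $\mu_H$. The decomposition $[X]=\sum_{V\in\Phi(G)}\mu_V(X)[G/V]$ recorded in \S 2.2 expresses any finite $G$-set in the $\mathbb{Z}$-basis $\{[G/V]\mid V\in\Phi(G)\}$, and since $G$ is finite the index set $\Phi(G)$ is finite, so all sums below are finite.

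First I would expand both factors in this basis,
\[ [X]=\sum_{V_1\in\Phi(G)}\mu_{V_1}(X)[G/V_1],\qquad [Y]=\sum_{V_2\in\Phi(G)}\mu_{V_2}(Y)[G/V_2].\]
Because the multiplication of $B(G)$ is induced by the cartesian product of $G$-sets and $B(G)$ is a commutative ring, multiplying these two expansions and using bilinearity of the product gives
\[ [X\times Y]=[X][Y]=\sum_{V_1,V_2\in\Phi(G)}\mu_{V_1}(X)\mu_{V_2}(Y)\,[G/V_1\times G/V_2].\]
Finally I would apply the additive homomorphism $\mu_H:B(G)\to\mathbb{Z}$ of \S 2.2 to both sides. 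Additivity yields $\mathbb{Z}$-linearity, so $\mu_H$ passes through the finite integer combination on the right, and by Definition \ref{transtransMD} each term $\mu_H([G/V_1\times G/V_2])$ equals $b_{V_1,V_2}(H)$, which produces the asserted identity.

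There is no genuine obstacle here; the only points requiring care are bookkeeping ones. I must make sure the cartesian product genuinely distributes over disjoint unions at the level of $G$-isomorphism classes---this is exactly the compatibility of the two semiring operations of $M'(G)$ that underlies the ring $B(G)$---and that $\mu_H$ is $\mathbb{Z}$-linear rather than merely additive on sums of $G$-sets; both follow from the structure of $B(G)$ described earlier. The substantive content, namely how $b_{V_1,V_2}(H)$ is actually computed from the double-coset description of $G/V_1\times G/V_2$ via Proposition \ref{TransM}, is not needed for this proposition and can be deferred.
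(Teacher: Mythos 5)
Your argument is correct and is essentially identical to the paper's own proof: both expand $[X]$ and $[Y]$ in the basis $\{[G/V]\}_{V\in\Phi(G)}$, use bilinearity of the product in $B(G)$, and then extract the coefficient of $[G/H]$ via Definition \ref{transtransMD} (the paper regroups the sum into the basis rather than invoking $\mathbb{Z}$-linearity of $\mu_H$, but this is the same step). No gaps.
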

\begin{proof}
Two $G$-sets $X$ and $Y$ have the following form,
\[ [X]=\sum_{V_1\in\Phi(G)}\mu_{V_1}(X)[G/V_1],\quad [Y]=\sum_{V_2\in\Phi(G)}\mu_{V_2}(X)[G/V_2].\]
Thus, we have
\begin{eqnarray*}
[X\times Y]&=&\sum_{V_1,V_2\in\Phi(G)}\mu_{V_1}(X)\mu_{V_2}(Y)[G/V_1\times G/V_2]\\
&=&\sum_{H\in\Phi(G)}\Big(\sum_{V_1,V_2\in\Phi(G)}b_{V_1,V_2}(H)\mu_{V_1}(X)\mu_{V_2}(Y)\Big)[G/H].
\end{eqnarray*}
Hence, this proposition holds.
\end{proof}

In the remainder of \S 2.2, we fix a subgroup $H$ of $G$. By Proposition \ref{Res}, there are exist a ring homomorphism $\Res{G}{H}:R(G)\rightarrow R(H)$ such that $\Res{G}{H}([X])=[\Res{G}{H}(X)]$ holds for any finite $G$-set $X$.

\begin{prop}\label{transtrans5}
Let $K_1$ and $K_2$ be subgroups of $G$ and let $V_1, V_2$ be subgroup $H$. Suppose that $K_1$ and $K_2$ are conjugate as subgroups of $G$ and $V_1$ and $V_2$ are conjugate as subgroups of $H$. Then, we have $\mu_{V_1}(\Res{G}{H}(G/K_1))=\mu_{V_2}(\Res{G}{H}(G/K_2)$.
\end{prop}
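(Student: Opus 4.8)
The plan is to mirror the proof of Proposition \ref{transtrans3}, with the cartesian product replaced by the restriction functor $\Res{G}{H}$. First I would exploit the hypothesis that $K_1$ and $K_2$ are conjugate in $G$: by Proposition \ref{Transit} this yields a $G$-isomorphism $G/K_1\cong G/K_2$. Applying $\Res{G}{H}$ and invoking Proposition \ref{Res}, which says that restriction sends $G$-isomorphic $G$-sets to $H$-isomorphic $H$-sets, I obtain an $H$-isomorphism
\[ \Res{G}{H}(G/K_1)\cong\Res{G}{H}(G/K_2) \]
between the two $H$-sets appearing in the statement.

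The second ingredient is that the multiplicity function is invariant under isomorphism of $H$-sets: since $\mu_{V_1}$ factors through the Burnside ring $B(H)$ and $H$-isomorphic $H$-sets represent the same class in $B(H)$, the previous isomorphism gives
\[ \mu_{V_1}(\Res{G}{H}(G/K_1))=\mu_{V_1}(\Res{G}{H}(G/K_2)). \]
Finally I would use that $V_1$ and $V_2$ are conjugate as subgroups of $H$. Applying Proposition \ref{Transit} to the group $H$ shows $H/V_1\cong H/V_2$ as $H$-sets, so $\mu_{V_1}$ and $\mu_{V_2}$ count the same orbit type and therefore agree as additive functions on $B(H)$; this is precisely the reasoning ``$\mu_{H_1}$ is equal to $\mu_{H_2}$ when $H_1$ and $H_2$ are conjugate'' used in the proof of Proposition \ref{transtrans3}, now carried out inside $H$. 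Hence $\mu_{V_1}(\Res{G}{H}(G/K_2))=\mu_{V_2}(\Res{G}{H}(G/K_2))$, and chaining this with the displayed equality above gives the claim.

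I do not expect a genuine obstacle: the result is a formal consequence of the functoriality of $\Res{G}{H}$ (Proposition \ref{Res}) together with the isomorphism- and conjugacy-invariance of $\mu$ already established for arbitrary finite groups. The only point requiring mild care is bookkeeping of the ambient group, namely ensuring that invariance under isomorphism and under conjugation of the indexing subgroup are both applied with respect to $H$ rather than $G$; once the correct group is fixed at each step, every implication is immediate.
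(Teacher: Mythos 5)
Your argument is exactly the paper's: conjugacy of $K_1,K_2$ gives $G/K_1\cong G/K_2$ via Proposition \ref{Transit}, restriction preserves this isomorphism by Proposition \ref{Res}, and conjugacy of $V_1,V_2$ inside $H$ makes $\mu_{V_1}=\mu_{V_2}$ as functions on $B(H)$. The extra care you note about applying conjugacy-invariance relative to $H$ rather than $G$ is correct and is implicit in the paper's (terser) version.
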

\begin{proof}
Since $K_1$ and $K_2$ are conjugate, then $G/K_1$ and $G/K_2$ are $G$-isomorphic by Proposition \ref{Transit}, that is, two $H$-sets $\Res{G}{H}(G/K_1)$ and $\Res{G}{H}(G/K_2)$ are $H$-isomorphic. Moreover, since $V_1$ and $V_2$ are conjugate, then two maps $\mu_{H_1}$ is equal to $\mu_{H_2}$. Hence, this proposition holds.
\end{proof}

\begin{defi}\label{transtransRD}
Let $V$ be an element of $\Phi(G)$ and let $K$ be an element of $\Phi(H)$. We define the number $c_V(K)$ by $\mu_{K}(\Res{G}{H}(G/V))$. By Proposition \ref{transtrans5}, this definition is well-defined.
\end{defi}

\begin{prop}\label{transtransR}
For any finite $G$-set $X$ and $K\in\Phi(H)$, we have
\[ \mu_{K}(\Res{G}{H}(X))=\sum_{V\in\Phi(G)}c_{V}(K)\mu_{V}(X).\]
\end{prop}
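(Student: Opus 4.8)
The plan is to prove this identity by exactly the same mechanism as Proposition \ref{transtransM}: expand $X$ in the $\mathbb{Z}$-basis of the Burnside ring, push the expansion through the restriction homomorphism, and then read off the answer via $\mu_K$. The only ingredients I need are the additivity of $\Res{G}{H}$ (established immediately before the statement, where it is noted that $\Res{G}{H}:B(G)\rightarrow B(H)$ is a ring homomorphism satisfying $\Res{G}{H}([X])=[\Res{G}{H}(X)]$) and the additivity of the counting functional $\mu_K:B(H)\rightarrow\mathbb{Z}$.

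First I would write the finite $G$-set $X$ in the standard form
\[ [X]=\sum_{V\in\Phi(G)}\mu_{V}(X)[G/V] \]
inside $B(G)$, which is available since $\{[G/V]\mid V\in\Phi(G)\}$ is a $\mathbb{Z}$-basis and the $\mu_V$ are the corresponding coordinate functionals. Applying the ring homomorphism $\Res{G}{H}$ and using its additivity gives
\[ [\Res{G}{H}(X)]=\sum_{V\in\Phi(G)}\mu_{V}(X)[\Res{G}{H}(G/V)] \]
in $B(H)$. Finally I would apply $\mu_K$ to both sides; since $\mu_K$ is an additive homomorphism on $B(H)$ and $c_V(K)=\mu_K(\Res{G}{H}(G/V))$ by Definition \ref{transtransRD}, this yields
\[ \mu_K(\Res{G}{H}(X))=\sum_{V\in\Phi(G)}\mu_{V}(X)\,c_V(K), \]
which is the asserted formula.

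There is essentially no obstacle here beyond bookkeeping: the content is entirely packaged into the two facts that $\Res{G}{H}$ and $\mu_K$ are additive, both of which are already in hand. The one point worth stating explicitly is that the expansion of $[X]$ has only finitely many terms (since $\Phi(G)$ is finite for a finite group $G$), so that linearity may be applied term by term with no convergence concern. I would also note, for cleanliness, that the well-definedness of $c_V(K)$ as a function of the conjugacy class $V\in\Phi(G)$ and $K\in\Phi(H)$ — rather than of particular subgroups — is exactly what Proposition \ref{transtrans5} guarantees, so the right-hand side is genuinely indexed by $\Phi(G)$ and the formula makes sense as written.
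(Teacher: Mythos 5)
Your argument is correct and follows the paper's proof essentially verbatim: both expand $[X]$ in the basis $\{[G/V]\}$, push through the additive homomorphism $\Res{G}{H}$, and extract the coefficient of $[H/K]$ via the definition $c_V(K)=\mu_K(\Res{G}{H}(G/V))$. The only cosmetic difference is that the paper expands each $[\Res{G}{H}(G/V)]$ as $\sum_{K}c_V(K)[H/K]$ and reads off coefficients, whereas you apply the functional $\mu_K$ directly; these are the same computation.
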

\begin{proof}
A finite $G$-set $X$ has the following form,
\[ [X]=\sum_{V\in\Phi(G)}\mu_V(X)[G/V].\]
Then, we have
\begin{eqnarray*}
[\Res{G}{H}(X)]&=&\sum_{V\in\Phi(G)}\mu_V(X)[\Res{G}{H}(G/V)]\\
&=&\sum_{V\in\Phi(G)}\mu_V(X)\Big(\sum_{K\in\Phi(H)}c_V(K)[H/K]\Big)\\
&=&\sum_{K\in\Phi(H)}\Big(\sum_{V\in\Phi(G)}c_V(K)\mu_V(X)\Big)[H/K].
\end{eqnarray*}
Hence, this proposition holds.
\end{proof}

\subsection{Symmetric powers operations on $\lambda$-rings.}
In \S 2.3, we discuss symmetric powers operations on $\lambda$-rings and a character of symmetric powers of representations of finite groups which will be used in \S 3.2. We will obtain the generating function of the character of the $i$-th symmetric powers of a representation in \S 3.2.

For a relation between representations of finite groups and $\lambda$-rings, see \S2.2 and \S 2.3.

We show that calculating the generating function of the character of the $i$-th symmetric powers of a representation $\rho$ is equivalent to calculating $\lambda_t(\chi)$ where $\chi$ is the character of $\rho$.

\begin{defi}
Let $R$ be a $\lambda$-ring (which was defined in \S 2.2). We define operations $S^n:R\rightarrow R, n=0,1,2,\ldots$ by the following equation,
\[ S_t(r)=\dfrac{1}{\lambda_{-t}(r)}\]
for any $r\in R$, where $S_t(r):=\sum_{n=0}^{\infty}S^n(r)t^n$. The operation $S^n$ is said to be the $n$-th symmetric powers operation.
\end{defi}
Symmetric powers operations were defined in \cite{Mar}. For each finite group $G$, symmetric powers operations of $CF(G)$ give the character of symmetric powers representations of $G$ by the following proposition.
\begin{prop}
Let $\chi$ be the character of a representation $\rho$. Then, the character of the $n$-th symmetric powers of representation $\rho$ is $S^n(\chi)$ for any integer $n\geq 0$.
\end{prop}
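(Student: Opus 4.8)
The plan is to prove that $S^n(\chi)$ is the character of the $n$-th symmetric power $S^n\rho$ by showing that both sides have the same generating function in $t$. The definition gives $S_t(\chi) = 1/\lambda_{-t}(\chi)$, so the natural strategy is to compute the generating function $\sum_n \mathrm{tr}(S^n\rho(g))\, t^n$ of the genuine symmetric-power characters directly, and then to match it against $1/\lambda_{-t}(\chi)$ using the already-cited fact (Knutson, \cite[p.84]{Knu}) that $\lambda^i(\chi)$ is the character of $\Lambda^i\rho$, i.e. $\lambda_{-t}(\chi)(g) = \sum_i (-t)^i\,\mathrm{tr}(\Lambda^i\rho(g))$.

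\medskip

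First I would reduce everything to a single group element $g\in G$. Fixing $g$, let $x_1,\dots,x_d$ be the eigenvalues of $\rho(g)$ on $V$ (with $d=\dim V$), which exist since $\rho(g)$ has finite order and is diagonalizable. The key computation is the classical identity relating the elementary and complete symmetric functions of the $x_j$: the eigenvalues of $\Lambda^i\rho(g)$ are the products $x_{j_1}\cdots x_{j_i}$ over $j_1<\dots<j_i$, so
\[
\lambda_{-t}(\chi)(g) = \sum_{i=0}^{d} (-t)^i\,\mathrm{tr}(\Lambda^i\rho(g)) = \prod_{j=1}^{d}(1 - t\,x_j),
\]
while the eigenvalues of $S^n\rho(g)$ are the degree-$n$ monomials in the $x_j$, giving
\[
\sum_{n=0}^{\infty} \mathrm{tr}(S^n\rho(g))\, t^n = \prod_{j=1}^{d}\dfrac{1}{1 - t\,x_j}.
\]
These two products are mutually reciprocal, so $\sum_n \mathrm{tr}(S^n\rho(g))\,t^n = 1/\lambda_{-t}(\chi)(g) = S_t(\chi)(g)$. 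Comparing coefficients of $t^n$ yields $\mathrm{tr}(S^n\rho(g)) = S^n(\chi)(g)$ for every $g$, which is exactly the claim that $S^n(\chi)$ is the character of $S^n\rho$.

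\medskip

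The main obstacle, and the step I would write out most carefully, is justifying the eigenvalue decompositions of $\Lambda^i\rho(g)$ and $S^i\rho(g)$ and the two product formulas above. Concretely one must check that if $v_1,\dots,v_d$ is an eigenbasis for $\rho(g)$ with eigenvalues $x_1,\dots,x_d$, then the wedge products $v_{j_1}\wedge\cdots\wedge v_{j_i}$ (resp. the symmetric monomials $v_{j_1}\cdots v_{j_i}$) form an eigenbasis for $\Lambda^i\rho(g)$ (resp. $S^i\rho(g)$) with the stated eigenvalues — this follows directly from the defining formulas $\Lambda^i\rho(g)(v_1\wedge\cdots\wedge v_i)=(\rho(g)v_1)\wedge\cdots\wedge(\rho(g)v_i)$ and its symmetric analogue given in the introduction. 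Once the eigenvalues are identified, the generating-function identities are the standard expansions $\prod_j(1+t x_j)=\sum_i e_i(x)\,t^i$ and $\prod_j (1-t x_j)^{-1}=\sum_n h_n(x)\,t^n$, where $e_i$ and $h_n$ are the elementary and complete homogeneous symmetric polynomials; the reciprocal relation $(\sum_i e_i(x)(-t)^i)(\sum_n h_n(x)t^n)=1$ is then precisely the coefficientwise statement $S_t = 1/\lambda_{-t}$. I would also remark that since $CF(G)$ is a $\lambda$-ring whose $\lambda$-operations agree with the exterior-power characters by \cite{Knu}, the operation $S^n$ defined abstractly on $CF(G)$ is forced to coincide with the concrete symmetric-power character, so no separate verification of $\lambda$-ring axioms is needed beyond the cited result.
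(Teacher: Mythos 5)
Your proposal is correct and follows essentially the same route as the paper: fix $g$, diagonalize $\rho(g)$ with eigenvalues $\alpha_1,\dots,\alpha_m$, identify $\lambda^i(\chi)(g)$ and the character of $S^n\rho$ at $g$ with the elementary and complete homogeneous symmetric polynomials in the $\alpha_j$, and conclude via the reciprocal product identity $\prod_j(1-\alpha_j t)^{-1}=\sum_n h_n(\alpha)t^n$. The only difference is that you spell out the eigenbasis verification that the paper dismisses with ``it is known that,'' and you correctly write $1/\lambda_{-t}(\chi)(g)$ where the paper's displayed equation has a sign slip ($\lambda_t$ in place of $\lambda_{-t}$).
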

\begin{proof}
Let $m$ be the degree of $\rho$. For each $g\in G$, let $\alpha_1,\ldots,\alpha_m$ be all eigenvalues of the linear map $\rho(g)$. It is known that 
\[ \lambda^n(\chi)(g)=\sum_{1\leq i_1<\cdots<i_n\leq m}\alpha_{i_1}\cdots\alpha_{i_n},\quad X(S^n\rho)
(g)=\sum_{1\leq i_1\leq\cdots\leq i_n\leq m}\alpha_{i_1}\cdots\alpha_{i_n},\]
where $X(S^n\rho)$ is the character of $S^n\rho$. Then, we have
\[ S_t(\chi)(g)=\dfrac{1}{\lambda_t(\chi)(g)}=\prod_{i=1}^m\dfrac{1}{1-\alpha_{i}t}=\sum_{n=0}^{\infty}X(S^n\rho)(g)t^n\]
where $S_t(\chi)(g):=\sum_{n=0}^{\infty}S^n(\chi)(g)$. Hence, $S^n(\chi)=X(S^n\rho)$ holds for any integer $n\geq 0$. 
\end{proof}

\section{$|A|$-colored $N$-nested $G$-set}
Let $G$ be a finite group. In this section, we define $|A|$-colored $N$-nested $G$-set, written by $J_{N,A,a}(X)$, for a given finite color set $A$, an element $a\in A$, a non-empty subset $N\subset \mathbb{N}\cup\{0\}$ and a finite $G$-set $X$.

In \S 3.1, we will define $|A|$-colored $N$-nested $G$-set and state its properties. In \S 3.2, we will put $|A|=2$ and $N=\mathbb{N}\cup \{0\}$, and discuss generating functions of the character of $n$-th symmetric power of representations of finite groups with $|A|$-colored $N$-nested $G$-set. Note that the discussion in \S 3.2 is also an idea of the definition and properties of $|A|$-colored $N$-nested $G$-set. In \S 3.3, we will put $N=\{0,1\}$ and establish the method of calculating the number of primitive colorings. In \S 3.4, we will generalize identities (\ref{necklaceM}) and (\ref{necklaceF}) for all $\mu_V(J_{N,A,a}(X))$ where $V\in\Phi(G)$.

In this section, we denote the set $\{1,\ldots,n\}$ by $[n]$ for any integer $n\geq 1$.

\subsection{Definition}
In \S 3.1, let $N$ be a non-empty subset of $\mathbb{N}\cup\{0\}$, let $A$ be a finite set with $|A|\geq 2$, and let $a$ be an element of $A$. First, we define $|A|$-colored $N$-nested $G$-set.
\begin{defi}
Let $X$ be a finite $G$-set. We define the $|A|$-colored $N$-nested $G$-set of $X$ with $a\in A$ by
\[ J_{N,A,a}(X):=\Big(\bigcup_{n\in N}{A'}^{[n]}\Big)^X, \]
where $A':=A\setminus\{a\}$ and $A'^{[0]}=\{a\}$. 

We regard the set $\Big(\bigcup_{n\in N}{A'}^{[n]}\Big)$ as the trivial $G$-set. We define a $G$-set structure on $J_{N,A,a}(X)$ by (\ref{GHom}). 
\end{defi}

Next, we state the decomposition of the disjoint union with the degree of $J_{N,A,a}(X)$.
\begin{defi}
For any elements of $a\in {A'}^{[n]}$, the symbol $\deg(a)$ is defined by $n$. We define $\deg(f)$ by
\[\deg(f):=\sum_{x\in X}\deg(f(x))\]
for any $f\in J_{N,A,a}(X)$. Next, we define the set $J_{N,A,a}^n(X)$ by
 \[ J_{N,A,a}^n(X):=\{ f\in J_{N,A,a}(X) \mid \deg(f)=n\} \]
for any integer $n\geq 0$.
\end{defi}
By this definition, we have 
\[ J_{N,A,a}(X)=\bigcup_{n=0}^{\infty}J_{N,A,a}^n(X).\]


\begin{prop}\label{JNProp}
Let $X$ be a finite $G$-set. For any integer $n\geq 0$, the set $J_{N,A,a}^n(X)$ is finite, and closed under the $G$-action of $J_{N,A,a}(X)$.
\end{prop}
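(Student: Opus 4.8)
The plan is to treat the two assertions separately, both of which reduce to elementary bookkeeping once the $G$-action is written out concretely. First I would record the explicit form of the action. Since the target $\bigcup_{n\in N}{A'}^{[n]}$ is regarded as the trivial $G$-set, formula (\ref{GHom}) specializes to $(gf)(x)=g\,f(g^{-1}x)=f(g^{-1}x)$ for every $g\in G$, $f\in J_{N,A,a}(X)$ and $x\in X$; that is, $g$ merely permutes the argument via the bijection $x\mapsto g^{-1}x$ of $X$ and leaves the values in the target untouched. This single observation is what drives both parts of the proof.

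For finiteness, the key point is that although the full target $\bigcup_{n\in N}{A'}^{[n]}$ is infinite when $N$ is infinite, the degree constraint confines any $f\in J^n_{N,A,a}(X)$ to a finite range. Indeed, if $\deg(f)=n$ then $\sum_{x\in X}\deg(f(x))=n$ with all summands non-negative, so $\deg(f(x))\le n$ for every $x\in X$. Hence each value $f(x)$ lies in the set $B_n:=\bigcup_{k\in N,\,0\le k\le n}{A'}^{[k]}$, which is finite, being a finite union of finite sets (here $A'$ is finite and ${A'}^{[0]}=\{a\}$). Therefore $J^n_{N,A,a}(X)\subseteq B_n^{X}$, and since both $X$ and $B_n$ are finite, $B_n^{X}$ is finite, whence $J^n_{N,A,a}(X)$ is finite.

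For $G$-stability I would show that $\deg$ is invariant under the action. Fixing $f\in J^n_{N,A,a}(X)$ and $g\in G$, I would use the explicit action together with the reindexing $y=g^{-1}x$, which is legitimate because $x\mapsto g^{-1}x$ is a bijection of $X$, to compute
\[ \deg(gf)=\sum_{x\in X}\deg\big((gf)(x)\big)=\sum_{x\in X}\deg\big(f(g^{-1}x)\big)=\sum_{y\in X}\deg\big(f(y)\big)=\deg(f)=n. \]
This yields $gf\in J^n_{N,A,a}(X)$, so $J^n_{N,A,a}(X)$ is closed under the $G$-action.

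I do not anticipate a genuine obstacle; the only thing requiring care is to isolate the two independent facts that make everything work, namely that the action on the values is trivial (so the degree of each individual value is preserved) while $x\mapsto g^{-1}x$ merely reorders the terms of the degree sum. If anything is worth stressing, it is that the full set $J_{N,A,a}(X)$ itself need \emph{not} be finite, so the truncation of the range to the finite set $B_n$ is the substantive step rather than a triviality.
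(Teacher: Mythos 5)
Your proposal is correct and follows essentially the same route as the paper: finiteness comes from confining the values of any degree-$n$ element to the finite truncation $\bigcup_{k\le n}{A'}^{[k]}$ so that $J^n_{N,A,a}(X)$ embeds in a finite set of maps, and closure comes from the computation $\deg(gf)=\sum_{x\in X}\deg(f(g^{-1}x))=\deg(f)$ via the bijection $x\mapsto g^{-1}x$. Your remarks making explicit that the action is trivial on values and that the full set $J_{N,A,a}(X)$ may be infinite are welcome clarifications, but the argument is the same.
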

\begin{proof}
For any integer $n\geq 0$, the set $A'^{[n]}$ is finite. Since an element $f\in J_{N,A,a}^n(X)$ satisfies $f(x)\in A^{[0]}\cup A^{[1]}\cup\cdots\cup A^{[n]}$ for any $x\in X$. Then, the set $J_{N,A,a}^n(X)$ is finite.

Next, for any $f\in J_{N,A,a}^n(X)$ and $g\in G$, we have
\[ \deg(gf)=\sum_{x\in X}\deg(gf(x))=\sum_{x\in X}\deg(f(g^{-1}x))=\deg(f)=n.\]
Hence, the element $gf$ belongs to $J_{N,A,a}^n(X)$.
\end{proof}

\begin{prop}
For given finite $G$-sets $X$ and $Y$ which are $G$-isomorphic, the $G$-set $J_{N,A,a}(X)$ is $G$-isomorphic to $J_{N,A,a}(Y)$.
\end{prop}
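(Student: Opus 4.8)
The plan is to construct an explicit $G$-isomorphism $\Phi : J_{N,A,a}(X) \to J_{N,A,a}(Y)$ induced by the given $G$-isomorphism between $X$ and $Y$, and then verify that it respects the $G$-action defined by (\ref{GHom}). Since $X$ and $Y$ are $G$-isomorphic, fix a $G$-isomorphism $\psi : Y \to X$; note that I deliberately take it in the direction $Y \to X$ so that precomposition sends maps out of $X$ to maps out of $Y$. Recall that $J_{N,A,a}(X) = \big(\bigcup_{n\in N}{A'}^{[n]}\big)^X$ consists of maps $f : X \to \bigcup_{n\in N}{A'}^{[n]}$, where the codomain carries the trivial $G$-action. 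The candidate isomorphism is $\Phi(f) := f \circ \psi$, so that $\Phi(f) : Y \to \bigcup_{n\in N}{A'}^{[n]}$.

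First I would check that $\Phi$ is a well-defined bijection of the underlying sets. It is a bijection because $\psi$ is a bijection, with inverse given by precomposition with $\psi^{-1}$. Second, I would verify that $\Phi$ is $G$-equivariant, i.e. $\Phi(gf) = g\,\Phi(f)$ for all $g\in G$ and $f\in J_{N,A,a}(X)$. Using the definition (\ref{GHom}) on both sides and the fact that the codomain is the trivial $G$-set (so $g$ acts as the identity on values), one computes for $y\in Y$ that
\[
\Phi(gf)(y) = (gf)(\psi(y)) = g\,f(g^{-1}\psi(y)) = f(g^{-1}\psi(y)) = f(\psi(g^{-1}y)),
\]
where the last equality uses that $\psi$ is $G$-equivariant, namely $g^{-1}\psi(y) = \psi(g^{-1}y)$. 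On the other hand $\big(g\,\Phi(f)\big)(y) = g\,\Phi(f)(g^{-1}y) = \Phi(f)(g^{-1}y) = f(\psi(g^{-1}y))$, again since the codomain is trivial. The two expressions agree, so $\Phi$ is a $G$-isomorphism.

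The only genuinely delicate point — and the step I expect to be the main obstacle — is keeping the direction of $\psi$ and the two uses of triviality of the codomain straight, since (\ref{GHom}) involves both a $g$ acting on values and a $g^{-1}$ on arguments; it is easy to produce an off-by-inverse error here. Once the bookkeeping is pinned down by the computation above, the argument is complete. I would remark that this is the expected functoriality of the construction $J_{N,A,a}(-)$ in its $G$-set argument, matching the behaviour already established for $\varphi_H$ in Proposition \ref{super}(1).
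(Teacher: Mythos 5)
Your proposal is correct and follows essentially the same route as the paper: the paper also defines the isomorphism by precomposition with the given $G$-isomorphism between the underlying sets (it builds the map $J_{N,A,a}(Y)\rightarrow J_{N,A,a}(X)$ from $\iota:X\rightarrow Y$, which is the mirror image of your $\Phi$), checks bijectivity, and verifies equivariance using the triviality of the codomain exactly as you do. No substantive difference.
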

\begin{proof}
By the assumption, there exists a $G$-isomorphism $\iota: X\rightarrow Y$. We define a map $\iota_1:J_{N,A,a}(Y)\rightarrow J_{N,A,a}(X)$ by $\iota_1(f)(x):=f(\iota(x))$ for any $f\in J_{N,A,a}(Y)$ and $x\in X$. 
First, we prove that the map $\iota_1$ is injective. Let $f_{1}$ and $f_{2}$ be elements of $J_{N,A,a}(Y)$, and we assume $\iota_1(f_1)=\iota_1(f_2)$ holds. Then, we have $f_1(\iota(y))=f_2(\iota(y))$ holds for any $y\in Y$. Thus $f_1=f_2$ holds, that is, the map $\iota_1$ is injective.

Next we prove that the map $\iota_1$ is surjective. Let $f$ be an element of $J_{N,A,a}(X)$. We put $g\in J_{N,A,a}(X)$ which satisfies $g(x)=f(\iota^{-1}(x))$ for any $x\in X$. Thus, we have $\iota_1(g)(y)=g(\iota(y))=f(y)$, that is, the map $\iota_1$ is surjective.

Thus $\iota_1$ is bijective, and
\[ \iota_1(gf)(x)=gf(\iota(x))=f(g^{-1}\iota(x))=f(\iota(g^{-1}x))=\iota_1(f)(g^{-1}x)=g(\iota_1(f))(x)\]
holds for any $f\in J_{N,A,a}(Y)$, $g\in G$ and $x\in X$. 

Hence, the map $\iota_1$ is a $G$-isomorphism, that is, this proposition holds.
\end{proof}

\begin{lemm}\label{XYdisjointL}
Let $S,T$ and $U$ be $G$-sets. Then, the map $\iota_2: U^{S}\times U^{T}\rightarrow U^{S\cup T}$, which is defined by
\[ \iota_2(f,g)(u)=\begin{cases}f(u) & \mbox{if}\ u\in S, \\g(u) & \mbox{if}\ u\in T,\end{cases}\]
where $f\in U^{S}, g\in U^{T}$ and $u\in S\cup T$, is a $G$-isomorphism.
\end{lemm}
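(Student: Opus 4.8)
The plan is to verify the three properties that make $\iota_2$ a $G$-isomorphism: that it is well-defined, that it is bijective, and that it is $G$-equivariant. For well-definedness, note that $S\cup T$ denotes the disjoint union of $G$-sets, so every $u\in S\cup T$ lies in exactly one of $S$ or $T$; the two branches of the piecewise formula therefore never compete, and $\iota_2(f,g)$ is an unambiguous element of $U^{S\cup T}$. (To avoid a clash between the second component of the pair and the group element, I would rename the pair as $(f,h)$ with $h\in U^T$.)

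For bijectivity, rather than arguing injectivity and surjectivity separately I would produce an explicit two-sided inverse. Define $\iota_3:U^{S\cup T}\to U^S\times U^T$ by $\iota_3(F)=(F|_S,F|_T)$, the pair of restrictions of $F$ to $S$ and to $T$. Using only that $S$ and $T$ partition $S\cup T$, one checks directly that $\iota_3\circ\iota_2$ and $\iota_2\circ\iota_3$ are the respective identity maps, so $\iota_2$ is a bijection with inverse $\iota_3$.

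The one step that genuinely uses the $G$-set hypotheses --- and the main point to get right --- is $G$-equivariance, namely $\iota_2(g\cdot(f,h))=g\cdot\iota_2(f,h)$ for all $g\in G$. Here the action on the source is the diagonal action $g\cdot(f,h)=(gf,gh)$, each component transformed by (\ref{GHom}), while the action on the target $U^{S\cup T}$ is also given by (\ref{GHom}). The key observation is that $S$ and $T$ are $G$-stable, so $u\in S$ forces $g^{-1}u\in S$ and $u\in T$ forces $g^{-1}u\in T$; this is precisely what keeps the evaluation $\big(g\cdot\iota_2(f,h)\big)(u)=g\cdot\iota_2(f,h)(g^{-1}u)$ inside the correct branch. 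Splitting into the cases $u\in S$ and $u\in T$ then collapses both sides to $(gf)(u)$ and $(gh)(u)$ respectively, matching $\iota_2(gf,gh)(u)$. I expect the only place a slip could occur is in juggling the diagonal action on the product against the single action on the disjoint union, so I would write that case split out explicitly and leave the restriction-inverse verification as a routine check.
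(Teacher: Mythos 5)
Your proposal is correct and follows essentially the same route as the paper's proof: the paper verifies injectivity and surjectivity separately (the surjectivity step implicitly constructing the same restriction map $F\mapsto(F|_S,F|_T)$ that you package as an explicit two-sided inverse) and then checks equivariance by the same case split on $u\in S$ versus $u\in T$. Your explicit remark that $G$-stability of $S$ and $T$ inside the disjoint union is what keeps $g^{-1}u$ in the correct branch is exactly the point the paper's computation relies on, so there is no substantive difference.
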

\begin{proof}
First, we prove that the map $\iota_2$ is injective. Let $f_{1}$ and $f_{2}$ be elements of $U^S$, and let $g_1$ and $g_2$ be elements of $U^T$. We assume that $\iota_2(f_1, g_1)=\iota_2(f_2,g_2)$ holds. If $u\in S$, then we have $f_1(u)=f_2(u)$. If $z\in T$, then we have $g_1(u)=g_2(u)$. Hence, $f_1=f_2$ and $g_1=g_2$ hold, that is, the map $\iota_2$ is injective.

Next we prove that the map $\iota_2$ is surjective. Let $f$ be an element of $U^{S\cup T}$. We put $f_1\in U^{S}$ and $g_1\in U^{T}$ which satisfies $f_1(x)=f(x)$ for any $x\in S$ and $g_1(x)=f(x)$ for any $y\in T$. Then, we have $\iota_2((f_1,g_1))=f$, that is, the map $\iota_2$ is surjective.

Finally, for any $g\in G$ and $(f_1,g_1)\in U^S\times U^T$, if $u\in S$ we have 
\[ g\iota_2((f_1,g_1))(u)=\iota_2((f_1,g_1))(g^{-1}u)=f_1(g^{-1}u)=(gf_1)(u)=\iota_2(g(f_1,g_1))(z).\]
 Similarly, we have $g\iota_2((f_1,g_1))(u)=\iota_2(g(f_1,g_1))(u)$ if $u\in T$. In any case, we have $g\iota_2((f_1,g_1))=\iota_2(g(f_1,g_2))$.

As a result, the map $\iota$ is a $G$-isomorphism.
\end{proof}

\begin{prop}\label{XYdisjointP}
Let $X$ and $Y$ be finite $G$-sets. Then, two $G$-sets $J_{N,A,a}(X \cup Y)$ and $J_{N,A,a}(X)\times J_{N,A,a}(Y)$ are $G$-isomorphic. In particular, for any integer $n\geq 1$ the following two $G$-sets are $G$-isomorphic:
\begin{eqnarray}\label{XYdisjoint}
\bigcup_{i+j=n}J_{N,A,a}^i(X)\times J_{N,A,a}^j(Y), \quad J_{N,A,a}^n(X\cup Y). 
\end{eqnarray}
\end{prop}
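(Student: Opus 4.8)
The plan is to observe that $J_{N,A,a}(X)$ is by definition the $G$-set $U^X$, where $U := \bigcup_{n\in N}{A'}^{[n]}$ is regarded as a trivial $G$-set, and then to apply Lemma \ref{XYdisjointL} verbatim. Indeed, taking $S = X$, $T = Y$ and $U$ as above, Lemma \ref{XYdisjointL} already furnishes a $G$-isomorphism
\[ \iota_2 : J_{N,A,a}(X)\times J_{N,A,a}(Y) = U^X\times U^Y \xrightarrow{\ \sim\ } U^{X\cup Y} = J_{N,A,a}(X\cup Y), \]
which settles the first assertion with essentially no extra work. The substance of the proposition is therefore the compatibility of $\iota_2$ with the degree decomposition.

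For the second assertion, the key step is to check that $\iota_2$ is additive on degrees. Given $(f,g)\in J_{N,A,a}^i(X)\times J_{N,A,a}^j(Y)$, I would compute, using that $X\cup Y$ is a disjoint union so that the defining sum splits,
\[ \deg(\iota_2(f,g)) = \sum_{u\in X\cup Y}\deg(\iota_2(f,g)(u)) = \sum_{x\in X}\deg(f(x)) + \sum_{y\in Y}\deg(g(y)) = \deg(f)+\deg(g) = i+j. \]
Hence $\iota_2$ carries $J_{N,A,a}^i(X)\times J_{N,A,a}^j(Y)$ into $J_{N,A,a}^{i+j}(X\cup Y)$. Conversely, any $h\in J_{N,A,a}^n(X\cup Y)$ equals $\iota_2$ applied to the pair of its restrictions to $X$ and $Y$, whose degrees sum to $n$, so it lies in the image of exactly one summand; the pieces $J_{N,A,a}^i(X)\times J_{N,A,a}^j(Y)$ over distinct pairs $(i,j)$ with $i+j=n$ are pairwise disjoint, so their union is a genuine disjoint union of $G$-sets.

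It then remains only to note that restricting a $G$-isomorphism to a $G$-invariant subset whose image is $G$-invariant again yields a $G$-isomorphism. Here the relevant subsets are $G$-invariant by Proposition \ref{JNProp} (each $J_{N,A,a}^n$ is closed under the $G$-action), so the restriction of $\iota_2$ gives the desired $G$-isomorphism
\[ \bigcup_{i+j=n}J_{N,A,a}^i(X)\times J_{N,A,a}^j(Y)\ \xrightarrow{\ \sim\ }\ J_{N,A,a}^n(X\cup Y). \]
I do not anticipate a genuine obstacle here: once Lemma \ref{XYdisjointL} is invoked, everything reduces to bookkeeping for the degree function, and the only point requiring a moment's care is confirming that the sum defining $\deg$ splits cleanly along the disjoint union $X\cup Y$ and that the degree-$n$ summands are indeed pairwise disjoint.
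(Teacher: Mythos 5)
Your proposal is correct and follows essentially the same route as the paper: invoke Lemma \ref{XYdisjointL} for the first assertion, then verify that the isomorphism is additive on degrees and maps the union of the degree-$(i,j)$ pieces onto $J^n_{N,A,a}(X\cup Y)$. Your additional remarks on the pairwise disjointness of the summands and the $G$-invariance of the degree-$n$ subsets (via Proposition \ref{JNProp}) only make explicit what the paper leaves implicit.
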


\begin{proof}
The first statement of this proposition holds by Lemma \ref{XYdisjointL}. Then, we show the second statement of this proposition.

Let $n\geq 1$ be an integer and let $\iota_3: J_{N,A,a}(X)\times J_{N,A,a}(Y)\rightarrow J_{N,A,a}(X\cup Y)$ be a map defined by Lemma \ref{XYdisjointL}. For any $f\in J^i_{N,A,a}(X)$ and $g\in J^j_{N,A,a}(Y)$ with $i+j=n$,
\[ \deg(\iota_3((f,g)))=\sum_{x\in X}\deg(f(x))+\sum_{y\in Y}\deg(g(y))=i+j=n\]
holds. Conversely, for any $h\in J^n_{N,A,a}(X\cup Y)$, $f\in J_{N,A,a}(X)$ and $g\in J_{N,A,a}(Y)$ with $\iota_3(f,g)=h$, we put $i=\deg(f)$ and $j=\deg(f)$. Then, $f\in J^i_{N,A,a}(X)$ and $g\in J^j_{N,A,a}(Y)$ hold. Hence,
\[ \iota_3\Big(\bigcup_{i+j=n}J_{N,A,a}^i(X)\times J_{N,A,a}^j(Y)\Big)= J_{N,A,a}^n(X\cup Y)\]
holds. That is, two $G$-sets (\ref{XYdisjoint}) are $G$-isomorphic. 
\end{proof}

Next, we consider the super character and orbits of $J^n_{N,A,a}(X)$.
\begin{defi}\label{generating}
Let $X$ be a finite $G$-set and let $H$ be a subgroup of $G$. By Proposition \ref{JNProp}, the set $J^n_{N,A,a}(X)$ is a finite $G$-set. We define two power serieses $\varphi_{H,t}(J_{N,A,a}(X))$ and $\mu_{H,t}(J_{N,A,a}(X))$ by
\[ \varphi_{H,t}(J_{N,A,a}(X)):=\sum_{n=0}^{\infty}\varphi_H (J_{N,A,a}^n(X) ) t^n \]
and
\[ \mu_{H,t}(J_{N,A,a}(X)):=\sum_{n=0}^{\infty}\mu_H(J_{N,A,a}^n(X))t^n \]
with an indeterminate variable $t$.
\end{defi}


By Proposition \ref{XYdisjointP}, we have
\begin{eqnarray}\label{XYdisjointP1}
\varphi_{H,t}(J_{N,A,a}(X\cup Y))=\varphi_{H,t}(J_{N,A,a}(X))\varphi_{H,t}(J_{N,A,a}(Y))
\end{eqnarray}
for any two finite $G$-sets $X$ and $Y$.

To calculate $\varphi_{H,t}(J_{N,A,a}(X))$, we show the following theorem.
\begin{theo}\label{Xpower}
For any subgroup $H$ of $G$ and finite $G$-set $X$, we have
\begin{eqnarray*}\label{Xpower0}
\varphi_{H,t}(J_{N,A,a}(X))=\prod_{i=1}^{\infty}\Big(\sum_{n\in N}(|A'|t^i)^n\Big)^{O_{X,H,i}}
\end{eqnarray*}
where $O_{X,H,i}$ is the number of orbits $Hx\subset \Res{G}{H}(X)$ such that $|Hx|=i$ holds for any integer $i\geq 1$.
\end{theo}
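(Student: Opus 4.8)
The plan is to reduce the computation of $\varphi_{H,t}(J_{N,A,a}(X))$ to a product over $H$-orbits of $X$. First I would invoke the $H$-orbit decomposition: restricting $X$ to an $H$-set, we can write $\Res{G}{H}(X)=\bigcup_{k} O_k$ where the $O_k$ are the distinct $H$-orbits. Since $\varphi_{H,t}$ is multiplicative over disjoint unions by the identity (\ref{XYdisjointP1}) — which itself follows from Proposition \ref{XYdisjointP} — and since $\varphi_H$ depends only on the $H$-set structure, it suffices to prove
\[ \varphi_{H,t}(J_{N,A,a}(O))=\sum_{n\in N}(|A'|t^{|O|})^n \]
for a single transitive $H$-set $O$, and then take the product over all orbits, grouping them by size $i=|O|$ to produce the exponent $O_{X,H,i}$.

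Next I would analyze the single-orbit case. Here $O$ is a transitive $H$-set, so $O\cong H/K$ for some subgroup $K\subset H$, and $|O|=i$. An element $f\in J_{N,A,a}(O)$ is a map $O\to\bigcup_{n\in N}{A'}^{[n]}$, and I need to count those $f\in (J_{N,A,a}(O))_H$, i.e. the $f$ fixed by every $h\in H$ under the action $(hf)(u)=h f(h^{-1}u)=f(h^{-1}u)$ (the codomain is the trivial $G$-set, so $h$ acts trivially on values). Thus $f$ is $H$-fixed exactly when $f$ is constant on the orbit $O$. So an $H$-fixed $f$ is determined by a single value $v\in\bigcup_{n\in N}{A'}^{[n]}$, and its degree is $\deg(f)=\sum_{u\in O}\deg(v)=i\cdot\deg(v)$. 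Summing $t^{\deg f}$ over all fixed $f$ gives $\sum_{v}t^{i\deg(v)}=\sum_{n\in N}|{A'}^{[n]}|\,t^{in}=\sum_{n\in N}(|A'|t^i)^n$, using $|{A'}^{[n]}|=|A'|^n$. This is precisely the single-orbit generating function I wanted, and the degree-graded count is exactly what $\varphi_{H,t}$ records.

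The main obstacle — really the only subtle point — is handling the degree grading correctly across the orbit decomposition so that the generating-function product matches. The multiplicativity (\ref{XYdisjointP1}) is stated only for the full generating function $\varphi_{H,t}$, not orbit by orbit, but since disjoint union of $G$-sets corresponds to the Cartesian product of $J$'s with degrees adding (the second part of Proposition \ref{XYdisjointP}), the power series identity carries through the convolution correctly; I would make sure to iterate (\ref{XYdisjointP1}) over the finitely many orbits and confirm the degree bookkeeping is consistent, namely that $\deg$ on $J_{N,A,a}(\bigcup_k O_k)$ decomposes as the sum of the degrees on each factor. After that, collecting equal-size orbits converts the product $\prod_k$ into $\prod_{i\geq 1}(\cdots)^{O_{X,H,i}}$, giving the stated formula. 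I expect the verification that $H$-fixed points of $J_{N,A,a}(O)$ are exactly the constant functions to be the conceptual heart, and everything else to be routine assembly.
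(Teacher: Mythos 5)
Your proposal is correct and follows essentially the same route as the paper: the paper likewise reduces to the transitive case where $H$-fixed elements are exactly the constant functions (giving $\sum_{n\in N}(|A'|t^i)^n$ per orbit of size $i$), multiplies over the orbit decomposition via (\ref{XYdisjointP1}), and handles general $H$ via the restriction isomorphism $\Res{G}{H}(J_{N,A,a}(X))\cong J_{N,A,a}(\Res{G}{H}(X))$ of Proposition \ref{XYRes} — the only cosmetic difference is that you restrict to $H$ at the outset while the paper first treats $H=G$ and restricts at the end.
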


We prepare the following proposition.
\begin{prop}\label{XYRes}
Let $X$ be a finite $G$-set, and let $H$ be a subgroup of $G$. Then, two $G$-sets 
$\Res{G}{H}(J_{N,A,a}(X))$ and $J_{N,A,a}(\Res{G}{H}(X))$ are $H$-isomorphic.
\end{prop}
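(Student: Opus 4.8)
The plan is to show that the two $H$-sets in the statement are in fact identical, so that the identity map on their common underlying set is the desired $H$-isomorphism. First I would observe that restricting a $G$-set to $H$ does not change its underlying set, so $\Res{G}{H}(X)$ has the same underlying set as $X$. Consequently both $\Res{G}{H}(J_{N,A,a}(X))$ and $J_{N,A,a}(\Res{G}{H}(X))$ have one and the same underlying set, namely the set of all maps $f\colon X\to \bigcup_{n\in N}{A'}^{[n]}$, where $A'=A\setminus\{a\}$.

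Next I would compare the two $H$-actions on this common set. On $\Res{G}{H}(J_{N,A,a}(X))$ the action is the restriction to $H$ of the $G$-action defined by (\ref{GHom}); since the codomain $\bigcup_{n\in N}{A'}^{[n]}$ is declared to be the trivial $G$-set, the formula $(gf)(x)=g\,f(g^{-1}x)$ collapses to $(hf)(x)=f(h^{-1}x)$ for $h\in H$ and $x\in X$. On $J_{N,A,a}(\Res{G}{H}(X))$ the action is the one given by (\ref{GHom}) applied to the $H$-set $\Res{G}{H}(X)$, again with the codomain regarded as the trivial $H$-set, so it is likewise $(hf)(x)=f(h^{-1}x)$. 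Since the two formulas coincide on the same underlying set, the identity map is $H$-equivariant and bijective, hence an $H$-isomorphism, which proves the proposition.

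The only point requiring care — and the step I would state explicitly — is that the general action formula (\ref{GHom}), which in principle involves the group action on the codomain, specializes to the same simple formula $f\mapsto f(h^{-1}\cdot)$ in both constructions precisely because the codomain is stipulated to be a trivial set on which the group acts trivially, and because a trivial $G$-set restricts to a trivial $H$-set. There is no genuine obstacle here: the content of the proposition is the functorial bookkeeping observation that the construction $J_{N,A,a}(-)$ commutes with the restriction functor $\Res{G}{H}$. If one preferred to avoid invoking equality of underlying sets outright, one could instead exhibit the identity map, check bijectivity trivially, and verify the equivariance identity $h\,\mathrm{id}(f)=\mathrm{id}(hf)$ directly as in the proofs of Lemma \ref{XYdisjointL} and Proposition \ref{Res}, but the direct identification is the cleanest route.
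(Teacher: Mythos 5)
Your proposal is correct and takes essentially the same route as the paper: the paper also exhibits the identity map on $J_{N,A,a}(X)$ and verifies $hF(f)(x)=F(f)(h^{-1}x)=f(h^{-1}x)=F(hf)(x)$, which is exactly the collapse of (\ref{GHom}) to $f\mapsto f(h^{-1}\cdot)$ that you justify via the triviality of the action on the codomain. Your write-up merely makes explicit the bookkeeping that the paper leaves implicit.
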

\begin{proof}
We show that the identity map on $J_{N,A,a}(X)$, written by $F$ in this proof, is a $G$-isomorphism. For any $f\in J_{N,A,a}(X)$, $x\in X$ and $h\in H$,
\[ hF(f)(x)=F(f)(h^{-1}x)=f(h^{-1}x)=hf(x)=F(hf)(x)\]
holds. Then, $\Res{G}{H}(J_{N,A,a}(X))$ and $J_{N,A,a}(\Res{G}{H}(X))$ are $H$-isomorphic.
\end{proof}

\begin{proof}[Proof of Theorem \ref{Xpower}]
First, we consider the case of $H=G$ and $X=G/K$. we prove
\begin{eqnarray}\label{Xpower1}
 \varphi_{G}(J_{N,A,a}^n(G/K))=\begin{cases} |A'|^{n/|G/K|} & \mbox{if}\ \frac{n}{|G/K|}\in N, \\ 0 & \mbox{otherwise} \end{cases}
\end{eqnarray}
for any subgroup $K$ of $G$. Any element $f \in J^n_{N,A,a}(G/K)$ satisfies $gf=f$ for all $g\in G$,
so that $f$ is a constant map since $G/K$ is a transitive $G$-set. Thus, there does not exist an element $f\in J^n_{N,A,a}(G/K)$ such that $|G/K| \mid n$ holds.

We assume $n\mid |G/K|$. Let $\alpha$ be an element of $G/K$ with $n'=\deg(f(\alpha))$. Then we have $n'\in N$ and $n'|G/K|=n$. The number of $f$ such that $f(\alpha)(m)\in A'$ holds for any $1\leq m\leq n'$ is $|A'|^{n'}$. Hence, we have (\ref{Xpower1}). 

Next, we show when $H=G$. By (\ref{Xpower1}), we have 
\[ \varphi_{G,t}(J_{N,A,a}(G/H))=\sum_{n\in N}(|A'|t^{|G/H|})^n.\]
Next, we prove this theorem for any finite $G$-set $X$. We use the orbit decomposition of $X$ and (\ref{XYdisjointP1}). Thus, we have
\begin{eqnarray}\label{XPower2}
 \varphi_{G,t}(J_{N,A,a}(X))=\prod_{i=1}^{\infty}\Big(\sum_{n\in N}(|A'|t^{i})^n\Big)^{O_{X,G,i}}.
\end{eqnarray}
Finally, we calculate the power series $\varphi_{H,t}(J_{N,A,a}(X))$ for any subgroup $H$ of $G$ with (\ref{XPower2}). By Proposition \ref{XYRes}, we have
\begin{eqnarray*}
\varphi_{H,t}(J_{N,A,a}(X))&=&\sum_{n=0}^{\infty}\varphi_H(J_{N,A,a}(X))t^n\\
&=&\sum_{n=0}^{\infty}\varphi_H(\Res{G}{H}(J_{N,A,a}(X)))t^n\\
&=&\sum_{n=0}^{\infty}\varphi_H(J_{N,A,a}(\Res{G}{H}(X)))t^n\\
&=&\varphi_{H,t}(J_{N,A,a}(\Res{G}{H}(X)))=\prod_{i=1}^{\infty}\Big(\sum_{n\in N}(|A'|t^i)^n\Big)^{O_{X,H,i}}.
\end{eqnarray*}
Hence, this theorem holds.
\end{proof}

To calculate $\mu_{H,t}(J_{N,A,a}(X))$, we use the following proposition.
\begin{prop}\label{superG1}
For any $V\in\Phi(G)$ and finite $G$-set $X$, 
\[ \mu_{H,t}(J_{N,A,a}(X))=\sum_{H\lesssim V}a_{H,V}\varphi_{V,t}(J_{N,A,a}(X))\]
holds where rational numbers $a_{H,V}$'s satisfy the identity of Theorem \ref{super1}.
\end{prop}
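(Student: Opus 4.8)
The plan is to reduce the statement about the generating power series $\mu_{H,t}(J_{N,A,a}(X))$ to the corresponding statement about the integers $\mu_H(J^n_{N,A,a}(X))$ for each fixed degree $n$, and then apply Theorem~\ref{super1} term by term. First I would recall that by Proposition~\ref{JNProp}, each $J^n_{N,A,a}(X)$ is a genuine finite $G$-set, so the class $[J^n_{N,A,a}(X)]$ is a well-defined element of the Burnside ring $B(G)$. Applying Theorem~\ref{super1} to the element $\alpha=[J^n_{N,A,a}(X)]\in B(G)$ gives
\[ \mu_H(J^n_{N,A,a}(X))=\sum_{H\lesssim V}a_{H,V}\,\varphi(\alpha)(V)=\sum_{H\lesssim V}a_{H,V}\,\varphi_V(J^n_{N,A,a}(X)), \]
where the rational numbers $a_{H,V}$ depend only on $G$, $H$, and $V$, not on $n$ or on $X$. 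This is the key point: the coefficients $a_{H,V}$ are constants that can be pulled outside a sum over $n$.

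Next I would multiply both sides by $t^n$ and sum over all $n\geq 0$. On the left-hand side this produces exactly $\mu_{H,t}(J_{N,A,a}(X))$ by Definition~\ref{generating}. On the right-hand side, since the index set $\{V\in\Phi(G)\mid H\lesssim V\}$ is finite (as $\Phi(G)$ is finite) and each $a_{H,V}$ is independent of $n$, I can interchange the finite sum over $V$ with the infinite sum over $n$:
\[ \sum_{n=0}^{\infty}\Big(\sum_{H\lesssim V}a_{H,V}\,\varphi_V(J^n_{N,A,a}(X))\Big)t^n=\sum_{H\lesssim V}a_{H,V}\Big(\sum_{n=0}^{\infty}\varphi_V(J^n_{N,A,a}(X))t^n\Big). \]
By Definition~\ref{generating} again, the inner sum on the right is precisely $\varphi_{V,t}(J_{N,A,a}(X))$, which yields the claimed identity.

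The argument is essentially a bookkeeping manipulation, so there is no deep obstacle; the only point requiring care is the justification for exchanging the two summations. Since the sum over $V$ is finite, this interchange is purely formal and valid at the level of formal power series in $t$, with no convergence issue to worry about: for each fixed power $t^n$ the coefficient identity is just the instance of Theorem~\ref{super1} applied to $[J^n_{N,A,a}(X)]$. Thus the main step to state explicitly is that $a_{H,V}$ is independent of the degree $n$, which follows because Theorem~\ref{super1} produces coefficients depending only on the pair $(H,V)$. With that observed, the proof is complete.
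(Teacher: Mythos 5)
Your argument is correct and is essentially identical to the paper's own proof: apply Theorem \ref{super1} to each finite $G$-set $J^n_{N,A,a}(X)$ (legitimate by Proposition \ref{JNProp}), multiply by $t^n$, sum over $n$, and swap the infinite sum with the finite sum over $V$. Your explicit remarks that the $a_{H,V}$ are independent of $n$ and that the interchange is purely formal are points the paper leaves implicit, but the route is the same.
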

\begin{proof}
By Theorem \ref{super1}, we have
\begin{eqnarray*}
\mu_{H,t}(J_{N,A,a}(X))&=&\sum_{i=0}^{\infty}\mu_H(J^i_{N,A,a}(X))t^i\\
&=&\sum_{i=0}^{\infty}\sum_{H\lesssim V}a_{H,V}\varphi_{V}(J^i_{N,A,a}(X))t^i\\
&=&\sum_{H\lesssim V}a_{H,V}\varphi_{V,t}(J_{N,A,a}(X)).
\end{eqnarray*}
\end{proof}

\subsection{A relation with symmetric powers of representations of finite group}
We discuss a generating function of the character of the $n$-th symmetric powers of representation with \S 3.1. 

In \S 3.2, we denote $\mathbb{N}\cup\{0\}$ by $N_s$, and we assume $|A|=2$. We consider $J_{N_s,A,a}(X)$ for any finite $G$-set $X$.

Dress and Siebeneicher \cite[\S 2.13]{DS} defined the $n$-th symmetric power of a finite $G$-set $X$, written by $S^n(X)$, by the set of $f:X \rightarrow\mathbb{N}\cup\{0\}$ satisfying $\sum_{x\in X}f(x)=n$. The set $S^n(X)$ has a $G$-set structure by (\ref{GHom}). In this condition, an element $f\in S^n(X)$ can be written 
\[ f=x_1\cdots x_1\cdots x_m\cdots x_m \]
where $X=\{x_1,\dots,x_m\}$ and the element $x_i$ appears $f(x_i)$ times. Moreover, for any $g\in G$ we have $gf=gx_1\cdots gx_1\cdots gx_m\cdots gx_m$. Hence, the permutation representation associated with $S^n(X)$ is the $n$-th symmetric power of the permutation representation associated with $X$.

For $S^n(X)$, we have the following proposition.

\begin{prop}
The $G$-set $S^n(X)$ is $G$-isomorphic to $J^n_{N_s,A,a}(X)$ for any integer $n\geq 1$.
\end{prop}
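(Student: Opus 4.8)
The plan is to exhibit an explicit degree-preserving bijection and verify that it is $G$-equivariant. The conceptual crux is that the hypothesis $|A|=2$ forces $A'=A\setminus\{a\}$ to be a singleton, say $A'=\{b\}$. Consequently, for each $n\in N_s$ the set ${A'}^{[n]}$ consists of exactly one map (the constant map with value $b$, and $\{a\}$ when $n=0$), so the degree function restricts to a bijection $\bigcup_{n\in N_s}{A'}^{[n]}\to\mathbb{N}\cup\{0\}$; note that both sides carry the trivial $G$-action.

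First I would define a map $\Phi$ from $J_{N_s,A,a}(X)$ to the set of all maps $X\to\mathbb{N}\cup\{0\}$ by $\Phi(f)(x):=\deg(f(x))$. By the singleton observation above, an element of $\bigcup_{n\in N_s}{A'}^{[n]}$ is uniquely determined by its degree, so $\Phi$ is a bijection: its inverse sends a map $\tilde f:X\to\mathbb{N}\cup\{0\}$ to the map $x\mapsto u_{\tilde f(x)}$, where $u_m$ denotes the unique element of ${A'}^{[m]}$. Next I would observe that $\Phi$ preserves degree, since $\deg(f)=\sum_{x\in X}\deg(f(x))=\sum_{x\in X}\Phi(f)(x)$ for every $f$. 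Therefore $f\in J^n_{N_s,A,a}(X)$ if and only if $\sum_{x\in X}\Phi(f)(x)=n$, i.e.\ $\Phi(f)\in S^n(X)$; hence $\Phi$ restricts to a bijection $J^n_{N_s,A,a}(X)\to S^n(X)$ for each integer $n\geq 1$.

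Finally I would check $G$-equivariance. Both the color set $\bigcup_{n\in N_s}{A'}^{[n]}$ and $\mathbb{N}\cup\{0\}$ are trivial $G$-sets, so the actions on $J_{N_s,A,a}(X)$ and on $S^n(X)$ are each given by (\ref{GHom}) with trivial action on the target, namely $(gf)(x)=f(g^{-1}x)$. Thus $\Phi(gf)(x)=\deg(f(g^{-1}x))=\Phi(f)(g^{-1}x)=(g\,\Phi(f))(x)$, so $\Phi(gf)=g\,\Phi(f)$, and $\Phi$ is the desired $G$-isomorphism.

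The argument involves no genuine obstacle; the only point requiring care is the singleton observation, which reduces the data of $f\in J_{N_s,A,a}(X)$ at each $x$ to the single numerical value $\deg(f(x))$, matching exactly the defining data of $S^n(X)$. Once that reduction is in place, equivariance is immediate, because both $G$-actions leave the coefficients untouched and merely permute the argument $x$ by $g^{-1}$.
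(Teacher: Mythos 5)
Your proposal is correct and follows essentially the same route as the paper: the same map $f\mapsto(x\mapsto\deg(f(x)))$, the same key observation that $|A|=2$ makes each ${A'}^{[n]}$ a singleton so elements are determined by their degree, and the same equivariance computation. The only cosmetic difference is that you define the bijection on all of $J_{N_s,A,a}(X)$ and then restrict by degree, while the paper works directly on $J^n_{N_s,A,a}(X)$.
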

\begin{proof}
From $|A|=2$, the set ${A'}^{[n]}$ has the unique element $f$ such that $f(k)=b$ holds for any $k=1,\ldots, n$, where $b$ is the unique element of $A\setminus\{a\}$. Thus, if $x,y\in \bigcup_{n\in N_s}^{\infty} {A'}^{[n]}$ satisfy $\deg(x)=\deg(y)$, then $x=y$ holds. 
 
We define a map $\iota_s:J^n_{N_s,A,a}(X)\rightarrow S^n(X)$ by $\iota_s(f)(x):=\deg(f(x))$ for any $x\in X$. 

First, we show that the map $\iota_s$ is injective. Let $f$ and $g$ be elements of $J^n_{N_s,A,a}(X)$ satisfying $\iota_s(f)=\iota_s(g)$. For any $x\in X$, we have $\deg(f(x))=\deg(g(x))$. Hence, we have $f(x)=g(x)$. That is, the map $\iota_s$ is injective.

Next, for any $g\in S^n(X)$, we put $f\in J^n_{N_s,A,a}(X)$ satisfying $f(x)\in A'^{[\deg(f(x))]}$. Then, we have $\iota_s(f)(x)=g(x)$. That is, the map $\iota_s$ is surjective.

Finally, we prove that $\iota_s(gf)=g\iota_s(f)$ holds for any $g\in G$ and $f\in J^n_{N_s,A,a}(X)$. One has 
\[ \iota_s(gf)(x)=\deg(gf(x))=\deg(f(g^{-1}x))=\iota_s(f(g^{-1}x)=g\iota_s(f)(x).\]
Then, the map $\iota_s$ is $G$-isomorphism.
\end{proof}

Substituting $N=N_s$ in the identities of Theorem \ref{Xpower}, we have
\begin{eqnarray}\label{Symrelation}
\varphi_{\circg{g},t}(J_{N_s,A,a}(X))=\prod_{i=1}^{\infty}\Big(\dfrac{1}{1-t^i}\Big)^{O_{X,\circg{g},i}}.
\end{eqnarray} 
For any $g\in G$, the right side of (\ref{Symrelation}) coincide with the generating function of the $S^n(\chi)(g)$ where $\chi$ is the character of the permutation representation associated with finite $G$-set $X$, and $S^n(\chi)$ is the character of the $n$-th symmetric power of permutation representation associated with a finite $G$-set $X$.

\begin{exa}\label{zeta}
Let $S_n$ be the symmetric group on $n$-letters. We consider the natural action of $S_n$ on $[n]$. Then, $S_n$-set $[n]$ is $S_n$-isomorphic to $S_n/S_{n-1}$ where we identify $S_{n-1}$ with the isotropy subgroup $\{\sigma \in S_n \mid \sigma (1)=1\}$ of $S_n$. 

Let $\{\lambda_1,\ldots,\lambda_m\}$ be the cycle structure of $\sigma \in S_n$. Then, the number of orbits of $\Res{S_n}{\circg{\sigma}}([n])$ whose cardinality is $i$, which is $O_{[n],\circg{\sigma},i}$, is equal to the number of $\lambda_k$ such that $\lambda_k=i$. Hence, we have 
\[ \varphi_{\circg{\sigma},t}(J_{N_s,A,a}([n]))=\prod_{i=1}^{\infty}\Big(\dfrac{1}{1-t^i}\Big)^{O_{[n],\circg{\sigma},i}}\]

\end{exa}
%
%
%
%

\subsection{Calculation of the number of primitive colorings}
In \S 3.3, we discuss $|A|$-colored $N$-nested $G$-set $X$ when $N=N_c:=\{0,1\}$ in order to calculate the number of primitive colorings. 

\begin{theo}\label{JPcolor}
For any finite $G$-set $X$, the map $\iota_c:J_{N_c,A,a}(X)\rightarrow A^X$ defined by 
\[ \iota_c(f)(x)=\begin{cases} a & \mbox{if}\ f(x)\in {A'}^{[0]}, \\ f(x)(1)& \mbox{if}\ f(x)\in {A'}^{[1]}, \end{cases} \]
is a $G$-isomorphism. In particular, two $G$-sets $J_{N_c,A,a}(X)$ and $A^X$ are $G$-isomorphic, and the set $J_{N_c,A,a}(X)$ is finite. Moreover, for any integer $n\geq 0$ and $f\in J^n_{N_c,A,a}(X)$, 
\[ |\{ x\in X \mid \iota_c(f)(x)\neq a \}|=n\]
 holds.
\end{theo}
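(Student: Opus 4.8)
The plan is to verify directly that $\iota_c$ is a well-defined $G$-isomorphism, then read off the degree statement. First I would check that $\iota_c$ is well-defined: since $N_c=\{0,1\}$, for each $x\in X$ the value $f(x)$ lies in $\bigcup_{n\in\{0,1\}}{A'}^{[n]}={A'}^{[0]}\cup{A'}^{[1]}=\{a\}\cup {A'}^{[1]}$, so exactly one of the two cases in the definition applies and $\iota_c(f)(x)$ is an unambiguous element of $A$. Thus $\iota_c(f)\in A^X$.

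Next I would prove bijectivity by exhibiting the inverse. Define $\kappa:A^X\to J_{N_c,A,a}(X)$ by sending $h\in A^X$ to the map $f$ with $f(x)=$ (the unique element of ${A'}^{[0]}$) if $h(x)=a$, and $f(x)=$ (the element of ${A'}^{[1]}$ with value $h(x)(1)=h(x)$) if $h(x)\neq a$. Here I use that an element of ${A'}^{[1]}$ is just a choice of an element of $A'=A\setminus\{a\}$, so $\kappa$ is well-defined and one checks $\iota_c\circ\kappa=\identitymap{A^X}$ and $\kappa\circ\iota_c=\identitymap{J_{N_c,A,a}(X)}$ by tracing through the two cases. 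This is the only genuinely combinatorial step, and it is routine since both sets are built pointwise over $X$.

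For $G$-equivariance I would verify $\iota_c(gf)=g\,\iota_c(f)$ using the definition (\ref{GHom}) of the $G$-action on both $J_{N_c,A,a}(X)$ and $A^X$. The key point is that the color set $\bigcup_{n\in N_c}{A'}^{[n]}$ carries the trivial $G$-action, so $\deg$ and the ``which case'' distinction are $G$-invariant; hence evaluating $\iota_c(gf)$ at $x$ gives $\iota_c(f)(g^{-1}x)$, which matches $(g\,\iota_c(f))(x)$ by (\ref{GHom}). Once $\iota_c$ is a bijective $G$-map it is a $G$-isomorphism, and since $A^X$ is finite (as $A$ and $X$ are finite) so is $J_{N_c,A,a}(X)$.

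Finally, for the degree statement, note that for $f\in J^n_{N_c,A,a}(X)$ we have $\deg(f(x))\in\{0,1\}$ for every $x$, with $\deg(f(x))=1$ exactly when $f(x)\in{A'}^{[1]}$, i.e.\ exactly when $\iota_c(f)(x)\neq a$. Therefore
\[
n=\deg(f)=\sum_{x\in X}\deg(f(x))=|\{x\in X\mid \iota_c(f)(x)\neq a\}|,
\]
which is the claimed identity. I do not anticipate a serious obstacle here; the main care needed is bookkeeping around the two-case definition, and the one subtlety worth stating explicitly is the canonical identification of ${A'}^{[1]}$ with $A'$ that makes $\kappa$ a genuine inverse.
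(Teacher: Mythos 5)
Your proof is correct and follows essentially the same direct-verification route as the paper: the same pointwise case analysis establishes bijectivity and $G$-equivariance, and packaging injectivity and surjectivity as an explicit inverse $\kappa$ is only a cosmetic difference (modulo the small notational slip where you write $h(x)(1)=h(x)$ for the element of ${A'}^{[1]}$ sending $1\mapsto h(x)$). You in fact go slightly further than the paper's own proof, which omits the verification of the final degree identity $|\{x\in X\mid \iota_c(f)(x)\neq a\}|=n$ that you supply via $\deg(f)=\sum_{x\in X}\deg(f(x))$.
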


\begin{proof}
First, we prove that the map $\iota_c$ is injective. Let $f$ and $g$ be elements of $J_{N_c,A,a}(X)$ with $\iota_c(f)=\iota_c(g)$. For any $x\in X$, if $f(x)\in {A'}^{[0]}$ then we have $f(x)=g(x)=a$. If $f(x)\in {A'}^{[1]}$, then $g(x)$ also belongs to $A'^{[1]}$ and $f(x)(1)=g(x)(1)$ holds. In any cases, we have $f(x)=g(x)$, thus, the map $\iota_c $ is injective.

Next, we prove that the map $\iota_c$ is surjective. For any $g\in A^X$, we put $f\in J_{N_c,A,a}(X)$ which satisfies 
\[ f(x)=\begin{cases} a & \mbox{if}\ g(x)=a,\\ 1\mapsto g(x) & \mbox{if}\ g(x)\neq a.\end{cases}\]
Thus, we have $\iota_c(f)(x)=g(x)$, that is, the map $\iota_c$ is surjective.

Finally, we prove $g\iota_c(f)=\iota_c(gf)$ for any $g\in G$ and $f\in J_{N_c,A,a}(X)$. Let $x$ be an element of $X$. If $f(g^{-1}x)\in {A'}^{[0]}$, then we have $g\iota_c(f)(x)=\iota_c(f)(g^{-1}x)=a$ and $\iota_c(gf)(x)=a$. If $f(g^{-1}x)\in {A'}^{[1]}$, then $(gf)(x)$ also belongs to ${A'}^{[1]}$, and $\iota_c(f)(g^{-1})(x)=f(g^{-1}x)(1)=(gf)(x)(1)=\iota_c(gf)(x)$ holds. In any case, we have $g\iota_c(f)=\iota_c(gf)$.
\end{proof}

We discuss a relation between the $G$-set $J_{N_c,A,a}(X)$ and the number of primitive colorings. First, we give the set of objects $X$ on polyhedrons, for example all vertices or all edges, and a rotation group $G$ of $X$. We assume that $X$ is a $G$-set.

\begin{defi}\label{color}
We define a coloring on $X$ with $A$ by an element of $A^X$. We define a primitive coloring on $X$ of $G$ with $A$ by a coloring on $X$ which is asymmetric under rotations of $G$. In other words, a primitive coloring is defined by an element $f\in A^X$ such that $|Gf|=|G|$ holds. 
\end{defi}
By the definition, the number of primitive colorings up to symmetry is $\mu_{C_1}(A^X)$.

An element $f\in \iota_c(J^n_{N_c,A,a}(X))$, which satisfies that the number of $x\in X$ such that $f(x)\neq a$ is equal to $n$, is a colorings that there exists $(|X|-n)$ times of $a$ colored place. The number of primitive colorings which has $(|X|-n)$ times of $a $ colored place is equal to $\mu_{C_1}(\iota_c(J^n_{N_c,A,a}(X)))$.

To calculate the number of primitive colorings, we calculate $\mu_{C_1}(J_{N_c,A,a}(X))$ and $\mu_{C_1}(J^{n}_{N_c,A,a}(X))$. Calculation of $\mu_{C_1}(J^n_{N_c,A,a}(X))$ is equivalent to calculating the generating function $\mu_{C_1,t}(J_{N_c,A,a}(X))$, and we obtain $\mu_{C_1,t}(J_{N_c,A,a}(X))$ by substituting $H=C_1$ in the identity of Proposition \ref{superG1}. 

Thus, we investigate rational numbers $a_{V,W}$'s and $\varphi_{H,t}(J^{n}_{N_c,A,a}(X))$ for any subgroup $H$ of $G$. To calculate $\varphi_{H,t}(J^{n}_{N_c,A,a}(X))$, we use the following theorem.

\begin{theo}\label{73eq}
For any finite $G$-set $X$ and subgroup $H$ of $G$,
\begin{eqnarray}\label{73eq1}
\varphi_{H,t}(J_{N_c,A,a}(X))=\prod_{i=1}^{\infty}(1+|A'|t^i)^{O_{X,H,i}}
\end{eqnarray}
holds. In particular,
\begin{eqnarray}\label{73eq2}
\varphi_H(J_{N_c,A,a}(X))=|A|^{O_H(X)}
\end{eqnarray}
holds where $O_H(X)$ is the number of orbits of $\Res{G}{H}(X)$ as an $H$-set.
\end{theo}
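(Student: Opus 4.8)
The plan is to derive both identities as specializations of Theorem \ref{Xpower}, which already computes $\varphi_{H,t}(J_{N,A,a}(X))$ for an arbitrary index set $N$. First I would substitute $N = N_c = \{0,1\}$ into the formula of Theorem \ref{Xpower}. The inner sum collapses to a two-term expression,
\[ \sum_{n\in N_c}(|A'|t^i)^n = (|A'|t^i)^0 + (|A'|t^i)^1 = 1 + |A'|t^i, \]
and substituting this into the product over $i$ yields (\ref{73eq1}) immediately. No further work is needed for the first identity; it is a direct corollary.

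For the second identity (\ref{73eq2}), I would observe that $\varphi_H(J_{N_c,A,a}(X))$ is exactly the value of the generating function $\varphi_{H,t}(J_{N_c,A,a}(X))$ at $t=1$. Indeed, since $J_{N_c,A,a}(X) = \bigcup_{n\geq 0} J^n_{N_c,A,a}(X)$ is a disjoint union and each $J^n_{N_c,A,a}(X)$ is a finite $G$-set by Proposition \ref{JNProp}, the additivity of $\varphi_H$ under disjoint unions (Proposition \ref{super}(3)) gives $\varphi_H(J_{N_c,A,a}(X)) = \sum_{n\geq 0}\varphi_H(J^n_{N_c,A,a}(X))$, which is precisely $\varphi_{H,t}(J_{N_c,A,a}(X))$ evaluated at $t=1$. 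Setting $t=1$ in (\ref{73eq1}) then produces $\prod_{i=1}^{\infty}(1+|A'|)^{O_{X,H,i}} = (1+|A'|)^{\sum_{i}O_{X,H,i}}$. Since $A' = A\setminus\{a\}$ we have $1+|A'| = |A|$, and since $\sum_i O_{X,H,i}$ counts the orbits of $\Res{G}{H}(X)$ of every cardinality it equals the total number $O_H(X)$ of $H$-orbits. This gives $\varphi_H(J_{N_c,A,a}(X)) = |A|^{O_H(X)}$, which is (\ref{73eq2}).

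The only point requiring care — and the closest thing to an obstacle — is legitimizing the substitution $t=1$ into a seemingly infinite product. This is harmless because $X$ is finite: an orbit of $\Res{G}{H}(X)$ has cardinality at most $|X|$, so $O_{X,H,i}=0$ for all $i>|X|$, and the product in (\ref{73eq1}) is in fact finite. Equivalently, $J^n_{N_c,A,a}(X)$ is empty for $n>|X|$, so the defining sum for $\varphi_H(J_{N_c,A,a}(X))$ is a finite sum and both the disjoint-union decomposition and the evaluation at $t=1$ are unproblematic.

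As an alternative route to (\ref{73eq2}) that avoids generating functions altogether, one could instead invoke Theorem \ref{JPcolor} to identify $J_{N_c,A,a}(X)$ with $A^X$ as a $G$-set, restrict the action to $H$, and count the $H$-fixed maps $X\to A$ directly: such a map is constant on each orbit of $\Res{G}{H}(X)$, so there are exactly $|A|^{O_H(X)}$ of them. I would present the $t=1$ argument as the primary proof, since the word ``In particular'' signals that (\ref{73eq2}) is meant to follow from (\ref{73eq1}), and mention the counting argument only as a sanity check.
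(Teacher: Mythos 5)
Your proposal is correct and follows essentially the same route as the paper: identity (\ref{73eq1}) is obtained by substituting $N=N_c$ into Theorem \ref{Xpower}, and identity (\ref{73eq2}) by evaluating at $t=1$, with the finiteness justification you give being exactly the content of Lemmas \ref{73eqlemm1} and \ref{73eqlemm2} that the paper invokes. The alternative counting argument via Theorem \ref{JPcolor} is a reasonable sanity check but is not part of the paper's proof.
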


To prove this theorem, we state the following lemmas.

\begin{lemm}\label{73eqlemm1}
Let $X$ be a finite $G$-set. Then, an element $f\in J_{N_c,A,a}(X)$ satisfies $\deg(f)\leq |X|$. In particular, the set $J^n_{N_c,A,a}(X)$ is the empty set for any integer $n>|X|$.
\end{lemm}
\begin{proof}
From $N_c=\{0,1\}$, the degree of $f(x)$ is $0$ or $1$ for any $x\in X$. Then, 
\[ \deg(f)=\sum_{x\in X}\deg(f(x))\leq |X|\]
holds. In particular, there does not exists an element $f\in J_{N_c,A,a}(X)$ such that $\deg(f)>|X|$. Hence, the set $J^n_{N_c,A,a}(X)$ is the empty set for any integer $n>|X|$.
\end{proof}

The following lemma shows that $\varphi_{H,t}$ is a refinement of $\varphi_H$ and that $\mu_{H,t}$ is that of $\mu_H$.

\begin{lemm}\label{73eqlemm2}
Let $X$ be a finite $G$-set and let $H$ be a subgroup of $G$. Consider that we substitute $t=1$ in $\mu_{H,t}(J_{N_c,A,a}(X))$ and $\varphi_{H,t}(J_{N_c,A,a}(X))$. Then,
\[ \mu_{H,1}(J_{N_c,A,a}(X))=\mu_{H}(J_{N_c,A,a}(X)),\quad \varphi_{H,1}(J_{N_c,A,a}(X))=\varphi_{H}(J_{N_c,A,a}(X)) \]
hold.
\end{lemm}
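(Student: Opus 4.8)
The plan is to exploit the fact that for $N_c=\{0,1\}$ both defining power series are in fact finite sums, so that setting $t=1$ merely collapses the grading by degree. First I would invoke Lemma \ref{73eqlemm1}: since $J^n_{N_c,A,a}(X)$ is the empty set whenever $n>|X|$, the series
\[ \varphi_{H,t}(J_{N_c,A,a}(X))=\sum_{n=0}^{\infty}\varphi_H(J^n_{N_c,A,a}(X))t^n,\qquad \mu_{H,t}(J_{N_c,A,a}(X))=\sum_{n=0}^{\infty}\mu_H(J^n_{N_c,A,a}(X))t^n \]
are both polynomials of degree at most $|X|$. Hence the substitution $t=1$ is literally a finite sum and no convergence issue arises; this finiteness is the only genuinely delicate point, and it is exactly what distinguishes the case $N=N_c$ from the case $N=N_s$ treated in \S 3.2, where the series need not terminate.

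For the identity involving $\varphi_H$, I would use the disjoint-union decomposition
\[ J_{N_c,A,a}(X)=\bigcup_{n=0}^{\infty}J^n_{N_c,A,a}(X), \]
which by the previous step is a \emph{finite} disjoint union, together with the additivity of $\varphi_H$ over disjoint unions, namely Proposition \ref{super}(3). Note that $J_{N_c,A,a}(X)$ is itself a finite $G$-set: by Theorem \ref{JPcolor} it is $G$-isomorphic to $A^X$. Applying Proposition \ref{super}(3) finitely many times then yields
\[ \varphi_H(J_{N_c,A,a}(X))=\sum_{n=0}^{\infty}\varphi_H(J^n_{N_c,A,a}(X))=\varphi_{H,1}(J_{N_c,A,a}(X)), \]
as desired.

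For the identity involving $\mu_H$, I would argue in the same spirit, replacing Proposition \ref{super}(3) by the fact recorded in \S 2.2 that $\mu_H\colon B(G)\to\mathbb{Z}$ is an additive homomorphism and that the disjoint union of finite $G$-sets corresponds to addition in the Burnside ring $B(G)$. Since the union $J_{N_c,A,a}(X)=\bigcup_{n}J^n_{N_c,A,a}(X)$ is finite, additivity gives
\[ \mu_H(J_{N_c,A,a}(X))=\sum_{n=0}^{\infty}\mu_H(J^n_{N_c,A,a}(X))=\mu_{H,1}(J_{N_c,A,a}(X)), \]
completing the proof. I do not expect any substantive obstacle beyond carefully recording that the termination from Lemma \ref{73eqlemm1} legitimizes evaluating the (now polynomial) generating functions at $t=1$; once that is in place, both halves follow from additivity of the respective invariants over disjoint unions.
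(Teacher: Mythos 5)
Your proposal is correct and follows essentially the same route as the paper: both invoke Lemma \ref{73eqlemm1} to truncate the series to a finite sum over $n\leq |X|$, and then use additivity of $\mu_H$ (as an additive homomorphism on $B(G)$) and of $\varphi_H$ (Proposition \ref{super}(3)) over the finite disjoint union $J_{N_c,A,a}(X)=\bigcup_{n=0}^{|X|}J^n_{N_c,A,a}(X)$. The only difference is cosmetic (you treat $\varphi_H$ first and $\mu_H$ second, the paper does the reverse), so there is nothing to add.
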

\begin{proof}
By Definition \ref{generating}, we recall
\[\mu_{H,t}(J_{N_c,A,a}(X))=\sum_{n=0}^{\infty}\mu_{H}(J^n_{N_c,A,a}(X))t^n.\]
By Lemma \ref{73eqlemm1}, we have $\mu_{H}(J^n_{N_c,A,a}(X))=0$ for any integer $n> |X|$. Substituting $t=1$ in $\mu_{H,t}(J_{N_c,A,a}(X))$, we have
\begin{eqnarray*}
\mu_{H,1}(J_{N_c,A,a}(X))&=&\sum_{n=0}^{\infty}\mu_{H}(J^n_{N_c,A,a}(X))\\
&=&\sum_{n=0}^{|X|}\mu_{H}(J^n_{N_c,A,a}(X))\\
&=&\mu_{H}\Big(\bigcup_{n=0}^{|X|}J^n_{N_c,A,a}(X)\Big)=\mu_H(J_{N_c,A,a}(X)).
\end{eqnarray*}
Similarly, we have $\varphi_{H,1}(J_{N_c,A,a}(X))=\varphi_{H}(J_{N_c,A,a}(X))$ since Proposition \ref{super} (3) and 
\[\varphi_{H,t}(J_{N_c,A,a}(X))=\sum_{n=0}^{\infty}\mu_{H}(J^n_{N_c,A,a}(X))t^n\] 
hold.
\end{proof}

\begin{proof}[Proof of Theorem \ref{73eq}]
For (\ref{73eq1}), we substitute $N=N_c$ in the identities of Theorem \ref{Xpower}. Thus, we have (\ref{73eq1}).

For (\ref{73eq2}), we substitute $t=1$ in the identity (\ref{73eq1}). The left side of (\ref{73eq1}) is equal to $\varphi_H(J_{N_c,A,a}(X))$ since Lemma \ref{73eqlemm2} holds and the right side of (\ref{73eq1}) is 
\[ |A|^{O_{X,H,1}+O_{X,H,2}+\cdots+O_{X,H,|X|}}=|A|^{O_H(X)}\]
by the definition of $O_H(X)$ and $O_{X,H,i}$'s.
\end{proof}

\begin{exa}\label{zeta2}
Let $n\geq 1$ be an integer. We denote the set of all vertices of a regular $n$-gon by $X_n$, and we denote by $C_n$ the cyclic group of order $n$, which is the group of rotation symmetry of 
the regular $n$-gon. Then, we can regard $X_n$ as a $C_n$-set, and the set $X_n$ is isomorphic to $C_n/C_1$. 

We show $\mu_{C_1}(J_{N_c,A,a}(X))=M(|A|,n)$ which is the same result as the number of primitive necklaces \cite{MR}. First, we calculate $\varphi_{C_{n/d},t}(J_{N_c,A,a}(X_n))$ for any divisor $d$ of $n$. By (\ref{Cn0}), we have
\[ O_{X_n,C_{n/d},i}=\begin{cases} d & \mbox{if}\ i=\frac{n}{d}, \\ 0 & \mbox{otherwise}. \end{cases} \]
By (\ref{73eq1}), we have
\[ \varphi_{C_{n/d},t}(J_{N_c,A,a}(X_n))=(1+|A'|t^{n/d})^d.\]
Thus, for any divisor $d$ of $n$, we have 
\begin{eqnarray}\label{EXCRR}
\begin{split}
\mu_{C_{n/d},t}(J_{N_c,A,a}(X_n))=&\sum_{i=0}^{\infty}\mu_{C_{n/d}}(J^i_{N_c,A,a}(X_n))t^i\\
=&\dfrac{1}{d}\sum_{i=0}^{\infty}\sum_{d'\mid d}\mu\Big(\dfrac{d}{d'}\Big)\varphi_{C_{n/d'}}(J^i_{N_c,A,a}(X_n))t^i\\
=&\dfrac{1}{d}\sum_{d'\mid d}\mu\Big(\dfrac{d}{d'}\Big)\varphi_{C_{n/d'},t}(J_{N_c,A,a}(X_n))\\
=&\dfrac{1}{d}\sum_{d'\mid d}\mu\Big(\dfrac{d}{d'}\Big)(1+|A'|t^{n/d'})^{d'}
\end{split}
\end{eqnarray}
where the third equation follows from (\ref{Cn2}) in the case of $J^i_{N_c,A,a}(X)$. Substituting $t=1$, we have
\begin{eqnarray}\label{ngonnecklace}
\mu_{C_{n/d}}(J_{N_c,A,a}(X_n))=\dfrac{1}{d} \sum_{d'\mid d}\mu\Big(\dfrac{d}{d'}\Big)|A|^{d'}=M(|A|,d)
\end{eqnarray}
by Lemma \ref{73eqlemm2} and (\ref{necklace}). We substitute $d=n$ in (\ref{ngonnecklace}), and we obtain $\mu_{C_1}(J_{N_c,A,a}(X_n))=M(|A|,n)$.

For example, we consider when $n=6$, $A=\{0,1\}$ and $a=2$. Substituting these condition and $d=n=6$ in (\ref{EXCRR}). Then we have
\begin{eqnarray*}
\mu_{C_1,t}(J_{N_c,A,a}(X_6))&=&\dfrac{1}{6}\Big((1+t)^6-(1+t^2)^3-(1+t^3)^2+(1+t^6)\Big)\\
&=&t+2t^2+3t^3+2t^4+t^5.
\end{eqnarray*}
Substituting $t=1$, we have $\mu_{C_1}(J_{N_c,A,a}(X_6))=9$ by Lemma \ref{73eqlemm2}. Figure 3 is all primitive colorings on $6$-gon of $C_6$ with $A=\{0,1\}$.
\begin{figure}[h]
	\centering
	\includegraphics[width=8cm,height=8cm]{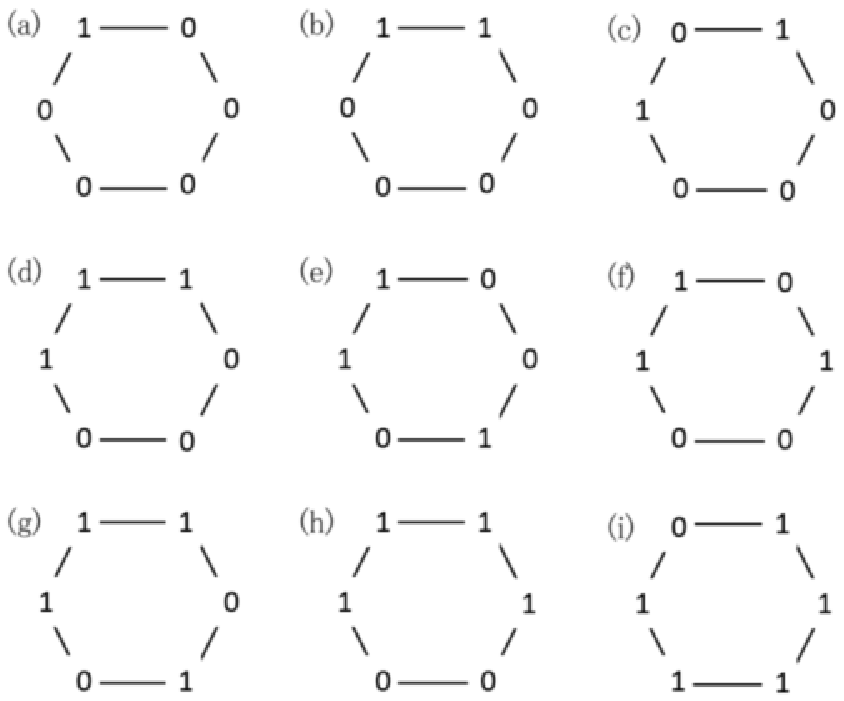}\\
	Figure 3: Primitive colorings on vertices of a regular 6-gon with $2$ colors $\{0,1\}$.
\end{figure}

Recall that the coefficient of $t^n$ in $\mu_{C_1,t}(J_{N_c,A,a}(X_6))$ is the number of primitive colorings on $X_6$ of $C_6$ which has $(6-n)$ times of $0$-colored places. In Figure 3, the image (a) is the unique primitive coloring which has five $0$-colored places, images (b) and (c), or (g) and (h) are primitive colorings which have four or two $0$-colored places, respectively, images (c), (d) and (e) are primitive colorings which have three $0$-colored places, the image (i) is the unique primitive coloring which has one $0$-colored place.
\end{exa}

\subsection{Generalization of identities of necklace polynomials}

In \S 3.4, we generalize identities (\ref{necklaceM}) and (\ref{necklaceF}) for $\mu_V(J_{N_c,A,a}(X))$ where $V$ is an element of $\Phi(G)$, $A$ is a finite set with $|A|\geq 2$ and $a\in A$ and $X$ is a finite $G$-set $X$. We show that if $X=X_n$, then we obtain identities (\ref{necklaceM}) and (\ref{necklaceF}).

\begin{theo}\label{GeneM}
Let $X$ be a finite $G$-set. Take finite sets $A$ and $B$ with $|A|, |B|\geq 2$, and take elements $a\in A$ and $b\in B$. Then, two $G$-sets \\$J_{A\times B,N_c,(a,b)}(X)$ and $J_{A,N_c,a}(X) \times J_{B,N_c,b}(X)$ are $G$-isomorphic. In particular, one has
\begin{eqnarray}\label{Power2}
\begin{split}
&\mu_{H}(J_{N_c,A\times B,(a,b)}(X))\\
=&\sum_{V_1, V_2\in\Phi(G)}b_{V_1,V_2}(H)\mu_{V_1}(J_{N_c,A,a}(X))\mu_{V_2}(J_{N_c,B,b}(X))
\end{split}
\end{eqnarray}
for any $H\in\Phi(G)$.
\end{theo}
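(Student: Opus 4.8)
The plan is to reduce the statement to the purely set-theoretic identity $(A\times B)^X \cong A^X\times B^X$ and then feed the resulting product decomposition into the machinery of \S 2.2. By Theorem \ref{JPcolor}, the three nested $G$-sets in question are $G$-isomorphic to $(A\times B)^X$, $A^X$ and $B^X$ respectively, where each color set carries the trivial $G$-action and each mapping set carries the action (\ref{GHom}); under these identifications the distinguished null-colors correspond to $(a,b)$, $a$ and $b$. Since the projections $p_A\colon A\times B\to A$ and $p_B\colon A\times B\to B$ satisfy $p_A(a,b)=a$ and $p_B(a,b)=b$, it suffices to produce a $G$-isomorphism $(A\times B)^X \cong A^X\times B^X$ and then compose it with the $G$-isomorphisms of Theorem \ref{JPcolor}.

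The second step is to construct this isomorphism explicitly. I would set $\theta\colon (A\times B)^X\to A^X\times B^X$, $\theta(f):=(p_A\circ f,\ p_B\circ f)$, with inverse sending a pair $(f_1,f_2)$ to the map $x\mapsto (f_1(x),f_2(x))$; bijectivity is immediate from the universal property of the product. For $G$-equivariance one uses that $A\times B$ is a trivial $G$-set, so that $(gf)(x)=f(g^{-1}x)$ under (\ref{GHom}); then $p_A\circ(gf)=g(p_A\circ f)$ in $A^X$ and likewise for $p_B$, whence $\theta(gf)=g\,\theta(f)$. Composing $\theta$ with the isomorphisms of Theorem \ref{JPcolor} yields the desired $G$-isomorphism $J_{N_c,A\times B,(a,b)}(X)\cong J_{N_c,A,a}(X)\times J_{N_c,B,b}(X)$, which is the first assertion.

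For the ``in particular'' clause I would observe that $\mu_H$ is invariant under $G$-isomorphism, since it factors through the Burnside ring $B(G)$; hence $\mu_H(J_{N_c,A\times B,(a,b)}(X))=\mu_H(J_{N_c,A,a}(X)\times J_{N_c,B,b}(X))$. Applying Proposition \ref{transtransM} to the cartesian product on the right, with the $X,Y$ there taken to be $J_{N_c,A,a}(X)$ and $J_{N_c,B,b}(X)$ and with $b_{V_1,V_2}(H)$ as in Definition \ref{transtransMD}, gives exactly (\ref{Power2}).

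The one genuine point to watch is that this decomposition is available only for the full (ungraded) $G$-sets: the degree of $f\in J_{N_c,A\times B,(a,b)}(X)$ equals $|\{x\in X\mid \iota_A(x)\neq a\ \text{or}\ \iota_B(x)\neq b\}|$, which is \emph{not} the sum of the degrees of the two components, so $\theta$ fails to respect the grading and the result cannot be upgraded to a statement about the pieces $J^n$ by a naive convolution. Thus the main work is confined to the careful bookkeeping of the distinguished elements and the routine check that $\theta$ is $G$-equivariant; no deeper obstacle arises, because Proposition \ref{transtransM} already encodes the behavior of $\mu_H$ under cartesian products.
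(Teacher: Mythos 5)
Your proof is correct, but it takes a genuinely different route from the paper. The paper does not construct any map at all: it computes the super characters of both sides using the formula $\varphi_H(J_{N_c,A,a}(X))=|A|^{O_H(X)}$ from (\ref{73eq2}), observes that $(|A||B|)^{O_H(X)}=|A|^{O_H(X)}|B|^{O_H(X)}=\varphi_H(J_{N_c,A,a}(X)\times J_{N_c,B,b}(X))$ for every subgroup $H$, and then relies on the injectivity of $\varphi$ (Theorem \ref{super1}) to conclude that the two finite $G$-sets have equal classes in $B(G)$, hence are $G$-isomorphic; the identity (\ref{Power2}) then follows from Proposition \ref{transtransM} exactly as in your last step. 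You instead produce the isomorphism explicitly, by composing the identification $\iota_c$ of Theorem \ref{JPcolor} with the canonical $G$-equivariant bijection $(A\times B)^X\cong A^X\times B^X$; your equivariance check is right, since the color sets are trivial $G$-sets. What your approach buys is a constructive isomorphism that is independent of the super-character machinery and of the injectivity of $\varphi$; what the paper's approach buys is brevity, since (\ref{73eq2}) is already available, and it is the same technique the paper reuses for Theorem \ref{GeneF}. Your closing remark that the isomorphism does not respect the grading by degree, so that (\ref{Power2}) cannot be refined to the pieces $J^n$ by convolution, is accurate and is a point the paper leaves implicit.
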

For the number $b_{V_1,V_2}(H)$, recall Definition \ref{transtransMD}.
\begin{proof}
We calculate both super characters of two $G$-sets $J_{N_c,A\times B, (a,b)}(X)$ and $J_{N_c,A,a}(X)\times J_{N_c,B,b}(X)$. By (\ref{73eq2}), we have
\begin{eqnarray*}
\varphi_H(J_{N_c,A\times B,(a,b)}(X))&=&(|A||B|)^{O_H(X)}\\
&=&|A|^{O_H(X)}|B|^{O_H(X)}=\varphi_H(J_{N_c,A,a}(X)\times J_{N_c,B,b}(X)).
\end{eqnarray*}
for any subgroup $H\subset G$.

For (\ref{Power2}), we calculate two integers $\mu_H(J_{A\times B,N_c,(a,b)}(X))$ and \\$\mu_H(J_{A,N_c,a}(X) \times J_{B,N_c,b}(X))$. Hence, we obtain the identity (\ref{Power2}) by Proposition \ref{transtransM}.
\end{proof}

\begin{exa}
Under the assumption of Example \ref{zeta2}, we show that the identity (\ref{GeneM}) is a generalization of the identity (\ref{necklaceM}).

Substituting $X=X_n$ in (\ref{Power2}), we have
\begin{eqnarray}\label{Power22}
\begin{split}
&\mu_{C_1}(J_{N_c,A\times B,(a,b)}(X_n))\\
=&\sum_{i,j\mid n}b_{C_{n/i}, C_{n/j}}(C_n)\mu_{C_{n/i}}(J_{N_c,A,a}(X_n))\mu_{C_{n/j}}(J_{N_c,B,b}(X_n)).
\end{split}
\end{eqnarray}

We use Proposition \ref{TransM} to calculate numbers $b_{C_{n/i}, C_{n/j}}$. For any divisors $i$ and $j$ of $n$, if $[i,j]=n$ then the number of double cosets $C_{n/i}gC_{n/j}$ in $C_n$ such that $C_{n/i}\cap C_{n/j}=C_n$ is $(i,j)$. If $[i,j]\neq n$, there does not exist a double coset satisfying $C_{n/i}\cap C_{n/j}=C_n$. With this fact and (\ref{ngonnecklace}), we have
\[ M(k_1k_2,n)=\sum_{[i,j]=n}(i,j)M(k_1,i)M(k_2,j)\]
where $k_1=|A|$ and $k_2=|B|$. It is the same as (\ref{necklaceM}).
\end{exa}

Next, we consider a generalization of (\ref{necklaceF})
\begin{theo}\label{GeneF}
Let $X$ be a finite $G$-set and let $r\geq 1$ be an integer. Take a finite set $|A|$ with $|A|\geq 2$, and an element $a\in A$. We put $P=A\times\cdots\times A$ $($$r$-times of cartesian product$)$ and $p=(a,\ldots,a)\in P$. 

Suppose that there exists a finite group $G'$ and a $G'$-set $Y$ such that $G$ is embedded to $G'$ and $[\Res{G'}{G}(Y)]=r[X]$ holds. Then the $G$-set $J_{N_c,P,p}(X)$ is $G$-isomorphic to $\Res{G'}{G}(J_{N_c,A,a}(Y))$. In particular, one has
\begin{eqnarray}\label{GeneFF}
\mu_{H}(J_{N_c,P,p}(X))=\sum_{V\in \Phi(G')}c_{V}(H)\mu_{V}(J_{N_c,A,a}(Y))
\end{eqnarray}
for any $H\in\Phi(G)$.
\end{theo}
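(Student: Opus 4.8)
The plan is to establish the asserted $G$-isomorphism first, and then read off (\ref{GeneFF}) exactly as in the proof of Theorem \ref{GeneM}. Because the super character homomorphism $\varphi$ is injective (Theorem \ref{super1}), two finite $G$-sets are $G$-isomorphic as soon as all of their super characters agree; so it suffices to verify that $\varphi_H(J_{N_c,P,p}(X)) = \varphi_H(\Res{G'}{G}(J_{N_c,A,a}(Y)))$ for every $H\in\Phi(G)$. This reduces the whole theorem to two applications of (\ref{73eq2}) together with one orbit count.

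For the left-hand side, since $|P| = |A|^r$, equation (\ref{73eq2}) gives directly $\varphi_H(J_{N_c,P,p}(X)) = |P|^{O_H(X)} = |A|^{r\,O_H(X)}$. For the right-hand side I would first note that, because $H\subset G\subset G'$, the set of $H$-fixed points of a $G'$-set is unchanged by restricting the action to $G$; thus $\varphi_H(\Res{G'}{G}(J_{N_c,A,a}(Y)))$ equals $\varphi_H(J_{N_c,A,a}(Y))$ computed for the $G'$-set $Y$, which by (\ref{73eq2}) applied in $G'$ is $|A|^{O_H(Y)}$, where $O_H(Y)$ counts the orbits of $\Res{G'}{H}(Y)$.

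The one substantive step is the orbit-counting identity $O_H(Y) = r\,O_H(X)$. Writing $\Res{G'}{H}(Y) = \Res{G}{H}(\Res{G'}{G}(Y))$ and using the hypothesis $[\Res{G'}{G}(Y)] = r[X]$ together with Proposition \ref{Res} (restriction preserves $G$-isomorphism and commutes with disjoint unions), one obtains $\Res{G'}{H}(Y)\cong r\,\Res{G}{H}(X)$, so its number of $H$-orbits is $r$ times that of $\Res{G}{H}(X)$. Hence the right-hand side is also $|A|^{r\,O_H(X)}$, the two super characters coincide for all $H\in\Phi(G)$, and the desired $G$-isomorphism follows from injectivity of $\varphi$.

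Finally, given $J_{N_c,P,p}(X)\cong\Res{G'}{G}(J_{N_c,A,a}(Y))$, I would obtain (\ref{GeneFF}) by applying Proposition \ref{transtransR} with $G'$ in the role of $G$ and $G$ in the role of $H$ to the $G'$-set $J_{N_c,A,a}(Y)$, then taking $\mu_H$ of both sides of the isomorphism. I expect the only genuinely delicate point to be the identity $O_H(Y)=r\,O_H(X)$, precisely because the hypothesis is stated only at the level of $\Res{G'}{G}(Y)$ and carries no information about the $G'$-action itself; fortunately super characters see only the restricted action, so this causes no trouble. As an alternative one could avoid super characters and build the isomorphism directly through the chain $J_{N_c,P,p}(X)\cong P^X = (A^r)^X \cong (A^X)^r \cong A^{rX}\cong \Res{G'}{G}(A^Y)\cong \Res{G'}{G}(J_{N_c,A,a}(Y))$ using Theorem \ref{JPcolor} and Lemma \ref{XYdisjointL}, but checking $G$-equivariance at each link is more laborious than the super-character comparison above.
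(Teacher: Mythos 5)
Your proposal is correct and follows essentially the same route as the paper: compare super characters of $J_{N_c,P,p}(X)$ and $\Res{G'}{G}(J_{N_c,A,a}(Y))$ via (\ref{73eq2}), conclude the $G$-isomorphism from injectivity of $\varphi$, and deduce (\ref{GeneFF}) from Proposition \ref{transtransR}. The only difference is that you spell out the orbit-count identity $O_H(\Res{G'}{G}(Y))=r\,O_H(X)$ via Proposition \ref{Res}, which the paper leaves implicit.
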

For the number $c_V(H)$, recall Definition \ref{transtransRD}.
\begin{proof}
We calculate both super characters of two $G$-sets $J_{N_c,P,p}(X)$ and \\ $\Res{G'}{G}(J_{N_c,A,a}(Y))$.  For any subgroup $H$ of $G$, we have
\begin{eqnarray*}
\varphi_H(J_{N_c,P,p}(X))&=&|P|^{O_H(X)}\\
&=&|A|^{rO_H(X)}\\
&=&|A|^{O_H(\Res{G'}{G}(Y))}\\
&=&\varphi_H(J_{N_c,A,a}(\Res{G'}{G}(Y)))=\varphi_H(\Res{G'}{G}(J_{N_c,A,a}(Y))).
\end{eqnarray*}
where the first and fourth equality follows from (\ref{73eq2}) and fifth equality follows from Proposition \ref{XYRes}.

Calculating $\mu_H(J_{N_c,P,p}(X))$ and $\mu_H(\Res{G'}{G}(J_{N_c,A,a}(Y)))$ we obtain the identity (\ref{GeneFF}) by Proposition \ref{transtransR}.
\end{proof}

\begin{exa}
Under the assumption of Example \ref{zeta2}, we show that the identity (\ref{GeneF}) is a generalization of the identity (\ref{necklaceF}). Let $r\geq 1$ be an integer.

We can identify $C_n$ with $C_{nr/r}\subset C_{nr}$, and we have $[\Res{C_{nr}}{C_{nr/r}}(X_{nr})]=r[X_n]$.


Then, two $C_n$-sets $J_{N_c,P,p}(X_n)$ and $ \Res{C_{nr}}{C_{nr/r}}(J_{N_c,A,a}(X_{nr}))$ are $C_n$-isomorphic. In particular, we have 
\begin{eqnarray*}
\mu_{C_1}(J_{N_c,P,p}(X_n))=\sum_{d\mid nr}c_{C_{nr/d}}(C_1)\mu_{C_{nr/d}}(J_{N_c,A,a}(X_{nr}))
\end{eqnarray*}
We use Proposition \ref{TransR} to calculate numbers $c_{C_{nr/d}}(C_1)$. The number of double cosets $C_{nr/d}gC_{nr/r}$ in $C_{nr}$ such that $C_{nr/r}\cap C_{nr/d}=C_1$ is $(d,r)$ when $[d,r]=nr$. In this case, we have $(d,r)=d/n$. Ir $[d,r]\neq nr$, then there does not exists a double coset $C_{nr/d}gC_{nr/r}$ in $C_{nr}$ such that $C_{nr/r}\cap C_{nr/d}=C_1$. Thus, we have
\[ M(k^r,n)=\sum_{[d,r]=nr}\dfrac{d}{n}M(k,d)\]
which is the same as (\ref{necklaceF}).
\end{exa}

We assume that the subgroup $C_1 \times G\subset C_r\times G$ is identified with $G$. Substituting $Y=C_r\times X$ in Theorem \ref{GeneF}, we have the following corollary.
\begin{cor}
For any finite $G$-set $X$, two $G$-sets $\Res{C_r\times G}{G}(J_{N_c,A,a}(C_r\times X))$ and $J_{N_c,P,p}(X)$ are $G$-isomorphic. In particular, one has
\begin{eqnarray*}
\mu_{H}(J_{N_c,P,p}(X))=\sum_{V\in\Phi(C_r\times G)}c_{V}(H)\mu_{V}(J_{N_c,A,a}(C_r\times X))
\end{eqnarray*}
for any subgroup $H$.
\end{cor}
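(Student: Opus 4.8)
The plan is to derive this statement as a direct specialization of Theorem \ref{GeneF}, taking $G' = C_r \times G$ and $Y = C_r \times X$, where $C_r$ carries its regular $C_r$-action (so that $C_r \cong C_r/C_1$ has exactly $r$ elements) and $C_r \times X$ is viewed as a $(C_r \times G)$-set through the product action. Under the identification of $G$ with the subgroup $C_1 \times G$ of $C_r \times G$ that is assumed in the preamble, the embedding $G \hookrightarrow G'$ required by Theorem \ref{GeneF} is precisely this inclusion. Hence the only hypothesis of Theorem \ref{GeneF} that needs checking is the relation $[\Res{C_r\times G}{G}(C_r \times X)] = r[X]$ in the semiring $M'(G)$.

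To verify it, I would first unwind the restricted action: for $(1,g) \in C_1 \times G$ and $(c,x) \in C_r \times X$ the product action gives $(1,g)(c,x) = (c,gx)$, since $1$ acts as the identity on the first factor. Thus restricting to $G = C_1 \times G$ leaves the $C_r$-coordinate fixed and acts only on the $X$-coordinate. Each fiber $\{c\} \times X$, as $c$ ranges over the $r$ elements of $C_r$, is therefore a $G$-stable subset, and $x \mapsto (c,x)$ is a $G$-isomorphism of $X$ onto it. Taking the disjoint union over $c \in C_r$ produces a $G$-isomorphism
\[ \Res{C_r\times G}{G}(C_r \times X) \cong \bigcup_{c \in C_r}\bigl(\{c\} \times X\bigr), \]
so that $[\Res{C_r\times G}{G}(C_r \times X)] = |C_r|\,[X] = r[X]$, as needed.

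Once this identity is in place, Theorem \ref{GeneF} applies word for word: its conclusion yields the $G$-isomorphism between $J_{N_c,P,p}(X)$ and $\Res{C_r\times G}{G}(J_{N_c,A,a}(C_r \times X))$, and the accompanying formula (\ref{GeneFF}), specialized to $G' = C_r \times G$ and $Y = C_r \times X$, is exactly the displayed identity, with the sum indexed by $V \in \Phi(C_r \times G)$ and the coefficients $c_V(H)$ as in Definition \ref{transtransRD}. I do not anticipate a genuine obstacle here, since the entire argument is a substitution into Theorem \ref{GeneF}; the one point meriting care is confirming that the identity element of $C_r$ fixes the first coordinate, so that the restriction really splits into $r$ copies of $X$ rather than fewer, which is what the computation above secures.
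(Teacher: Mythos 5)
Your proposal is correct and follows the same route as the paper, which obtains the corollary simply by substituting $G'=C_r\times G$ and $Y=C_r\times X$ into Theorem \ref{GeneF}. The only addition is your explicit verification that $[\Res{C_r\times G}{G}(C_r\times X)]=r[X]$ via the fiberwise decomposition over $c\in C_r$, a hypothesis check the paper leaves implicit; this is a worthwhile and accurate filling-in, not a different argument.
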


\section{The number of primitive colorings on some objects of polyhedrons.}
In this section, we assume $N=N_c=\{0,1\}$ and calculate the number of primitive colorings on a given the set of object $X$ and a rotation group $G$ of $X$ with a color set $A$. For more detail, we calculate $\mu_{C_1}(J_{N_c,A,a}(X))$ and \\ $\mu_{C_1,t}(J_{N_c,A,a}(X))$ in the following cases as examples where $a\in A$.
\begin{itemize}
\item[(i)] We assume that $X$ is the set of all vertices of $n$-prism and $G$ is the dihedral group $D_{n}$.
\item[(ii)] We assume that $X$ is the set of all vetrices of $n$-gon and $G$ is the dihedral group.
\end{itemize}
In Example \ref{zeta2}, we considered the set of all vertices of $n$-gon $X_n$ and the cyclic group $C_n$. The difference of the condition of Example \ref{zeta2} and (ii) is that we consider not only rotations but also reverses in (ii).
In \S 4.1.1, we investigate the cardinality of orbits of $\Res{D_n}{K}(D_n/K) $ and $\varphi_K(D_{n}/K')$ for any $K, K'\in \Phi(D_{n})$. In addition, we represent $\mu_K(X)$ as all $\varphi_K(X)$ for any $K\in\Phi(D_{n})$ (Corollary \ref{DDDDD2}). In \S 4.1.2, we consider the case (ii), and in \S 4.1.3 we consider the case (ii).

\subsection{A dihedral group }
We consider the case of $G=D_{n}$ which is a finite group generated by two elements $a$ and $b$ satisfying $a^n=b^2=1$ and $bab=a^{-1}$. In \S 4.1, we redefine $C_{n/d}$ by the subgroup $\circg{a^d}$ for any divisor $d$ of $n$.

\subsubsection{Calculation of orbits}
First, we investigate all elements of $D_{n}$.
\begin{prop}\label{CongDn}
Conjugacy subgroups of $D_{n}$ are the following subgroups.
\begin{eqnarray*}
C_{n/d}&:=&\{1, a^d,\ldots, a^{(n/d-1)d} \},\\
D_{n/d}&:=&C_{n/d}\cup bC_{n/d}, \\
D'_{n/d}&:=&C_{n/d}\cup baC_{n/d},
\end{eqnarray*}
for each divisor $d$ of $n$ where two subgroups $D_{n/d}$ and $D'_{n/d}$ are conjugate if $2\nmid d$.
\end{prop}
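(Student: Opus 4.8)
The goal is to classify the conjugacy classes of subgroups of the dihedral group $D_n = \langle a, b \mid a^n = b^2 = 1,\ bab = a^{-1}\rangle$, showing that every subgroup is conjugate to exactly one of the listed families $C_{n/d}$, $D_{n/d}$, $D'_{n/d}$ for divisors $d$ of $n$, and to settle when $D_{n/d}$ and $D'_{n/d}$ coincide up to conjugacy. The plan is to first enumerate \emph{all} subgroups, then collapse the list under conjugation.

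First I would recall the standard description of the elements of $D_n$: every element is either a rotation $a^k$ (there are $n$ of these, forming the cyclic normal subgroup $C_n = \langle a\rangle$) or a reflection $a^k b$ (there are $n$ of these, each of order $2$). Given any subgroup $K \subseteq D_n$, I would split into two cases according to whether $K$ contains a reflection. If $K$ contains only rotations, then $K \subseteq C_n$, and since subgroups of a cyclic group of order $n$ are in bijection with divisors $d \mid n$, we have $K = \langle a^d\rangle = C_{n/d}$ for a unique $d$; these are the subgroups of the first family. If $K$ does contain a reflection, then $K \cap C_n$ is a proper-or-full cyclic subgroup $\langle a^d\rangle = C_{n/d}$ of rotations (here $d \mid n$), and $K$ is generated by $C_{n/d}$ together with one reflection $a^r b$. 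A counting argument shows $K = C_{n/d} \cup (a^r b)\, C_{n/d}$, a dihedral-type group of order $2n/d$. Since $(a^r b) a^d = a^{r-d} b$, the coset $(a^r b)C_{n/d}$ depends only on $r \bmod d$, so for each $d$ there are exactly $d$ such subgroups, indexed by $r \in \{0, 1, \dots, d-1\}$.

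Next I would carry out the conjugation analysis. Conjugating a reflection by a rotation gives $a^j (a^r b) a^{-j} = a^{r+2j} b$, so the rotation $a^j$ sends the subgroup determined by residue $r \bmod d$ to the one determined by $r + 2j \bmod d$. As $j$ ranges over $\mathbb{Z}$, the reachable residues are $\{r + 2j \bmod d\}$, which is the coset of the subgroup $2\mathbb{Z} \bmod d$ inside $\mathbb{Z}/d\mathbb{Z}$. When $d$ is odd, $2$ is invertible mod $d$, so $2\mathbb{Z} \bmod d = \mathbb{Z}/d\mathbb{Z}$ and all $d$ residues are conjugate: there is a single conjugacy class, which I would name $D_{n/d}$ (taking the representative $r = 0$, i.e.\ $D_{n/d} = C_{n/d} \cup bC_{n/d}$). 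When $d$ is even, $2\mathbb{Z} \bmod d$ has index $2$, splitting the residues into the even ones (representative $r=0$, giving $D_{n/d} = C_{n/d}\cup bC_{n/d}$) and the odd ones (representative $r=1$, giving $D'_{n/d} = C_{n/d} \cup baC_{n/d}$), yielding two distinct conjugacy classes. I should also check that conjugating by a reflection produces no new identifications — conjugation by $b$ sends $a^r b$ to $a^{-r} b$, hence $r \mapsto -r$, which already lies in the same even/odd parity class, so it adds nothing new.

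The main obstacle is the even case: verifying that $D_{n/d}$ and $D'_{n/d}$ are genuinely \emph{non}-conjugate when $d$ is even, rather than merely that my chosen representatives differ. This amounts to confirming that the parity of $r \bmod d$ is a genuine conjugation invariant, which follows because every conjugation either shifts $r$ by an even number (conjugation by rotations) or negates $r$ (conjugation by reflections), both of which preserve parity when $d$ is even. I would also need a small sanity check on degenerate divisors — for instance $d = n$ giving the order-$2$ reflection subgroups, and the interaction with $C_{n/d}$ being trivial — but these fall out of the same parity bookkeeping. Everything else is routine coset and counting manipulation, and the statement that $D_{n/d}$ and $D'_{n/d}$ are conjugate precisely when $2 \nmid d$ is exactly the even/odd dichotomy above.
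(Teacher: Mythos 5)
Your proposal is correct and follows essentially the same route as the paper: enumerate all subgroups by their intersection with $\langle a\rangle$ plus (possibly) one reflection coset, then track how conjugation by $a$ shifts the reflection's exponent by $2$ and conjugation by $b$ negates it, yielding the parity dichotomy in $d$. Your write-up is in fact slightly more careful than the paper's, since you explicitly verify that the parity of the residue modulo an even $d$ is a genuine conjugation invariant (so $D_{n/d}$ and $D'_{n/d}$ are truly non-conjugate), a point the paper merely asserts.
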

\begin{proof}
Put $D_{n}=\{1,a,a^2,\ldots,a^{n-1}, b,ba,ba^2,\ldots,ba^{n-1}\}$. First, we investigate all subgroups $H$ of $D_{n}$. Let $d$ be the minimum number such that $a^d$ is an element of $H$. Thus, $C_{n/d}$ is contained to $H$. If there does not exist an element which has the form $ba^k$ in $H$, then $H=C_{n/d}$ holds. If $H$ has an element which has the form $ba^k$, put $k=0,\ldots,n-1$ which is the minimum number such that $ba^k$ is an element of $H$. Hence, we have $H=C_{n/d}\cup ba^kC_{n/d}$. 

Next, we consider all conjugacy subgroups of $D_{n}$. Let $d$ be a divisor of $n$. For elements $a,b\in D_{n}$, we have $aC_{n/d}a^{-1}=C_{n/d}$ and $bC_{n/d}b^{-1}=C_{n/d}$ by the definition of $D_{n}$. Thus, $C_{n/d}$ is one of conjugacy subgroup of $D_{n}$. In addition, we have $a(C_{n/d}\cap ba^k C_{n/d})a^{-1}=C_{n/d}\cap ba^{k-2}$ and $b(C_{n/d}\cap ba^k C_{n/d})b^{-1}=C_{n/d}\cap ba^{-k}$. Now, put
\[D_{n/d}:=C_{n/d}\cup bC_{n/d},\quad D'_{n/d}:=C_{n/d}\cup baC_{n/d}.\]
Thus, all subgroups of $D_{n}$ which has the form $C_{n/d}\cup ba^{k}C_{n/d}$ are conjugate to either of the above subgroups. Moreover, if $2\nmid d$ then the above two subgroups are conjugate. If $2\mid d$ then the above two subgroups are not conjugate.
\end{proof}

We investigate orbits of $K$-set $\Res{D_{n}}{K}(D_{n}/K')$ for any $K, K'\in\Phi(D_{n})$. For the calculation, we recall Proposition \ref{TransR}, and we focus on the cardinality of orbits of $\Res{D_{n}}{K}(D_{n}/K')$ from (\ref{73eq1}).
\begin{itemize}
\item [(i)] First, we consider the case of $K'=C_{n/d'}$. Recall that $C_{n/d'}$ is the normal subgroup of $D_{n}$. Then, we have
\begin{eqnarray*}
[\Res{D_{n}}{C_{n/d'}}(D_{n}/K)]=\sum_{C_{n/d'}gK}[ C_{n/d'}/(C_{n/d'}\cap gKg^{-1})]
\end{eqnarray*}
where $g$ ranges over a set of double coset representatives of $C_{n/d'}$ and $K$ in $D_{n}$. For any $g\in D_{n}$, we have $|(C_{n/d'}\cap gKg^{-1})|=|C_{n/d'}\cap K|$. In addition, the number of double cosets of $C_{n/d'}$ and $K$ is $|D_{n}/C_{n/d'}K|$. Thus, we have
\begin{eqnarray}\label{DD3CC}
O_{D_{n}/K, C_{n/d'},i}=\begin{cases} |D_n/C_{n/d'}K| & \mbox{if}\ i=|C_{n/d'}/(C_{n/d'}\cap K)|, \\ 0 & \mbox{otherwise}. \end{cases}
\end{eqnarray}
Hence, we have
\begin{eqnarray*}
\varphi_{C_{n/d'}}(D_{n}/K)=\begin{cases} |D_{n}/C_{n/d'}K| & \mbox{if}\ C_{n/d'}\subset K, \\ 0 & \mbox{otherwise}, \end{cases}
\end{eqnarray*}
and by (\ref{73eq1}) and (\ref{DD3CC}), we have
\begin{eqnarray*}
\varphi_{C_{n/d'},t}(J_{N_c,A,a}(D_{n}/K))=(1+|A'|t^{|C_{n/d'}/(C_{n/d'}\cap K)|})^{|D_{n}/C_{n/d'}K|}.
\end{eqnarray*}
In particular,
\begin{eqnarray}
\label{DD1CC}\varphi_{C_{n/d'}}(D_{n}/C_{n/d})&=&\begin{cases} 2d' & \mbox{if}\ d'\mid d, \\ 0 & \mbox{if}\ d'\nmid d, \end{cases}\\
\label{DD1CD}\varphi_{C_{n/d'}}(D_{n}/D_{n/d})&=&\begin{cases} d' & \mbox{if}\ d'\mid d, \\ 0 & \mbox{if}\ d'\nmid d, \end{cases}\\
\label{DD1CE}\varphi_{C_{n/d'}}(D_{n}/D'_{n/d})&=&\begin{cases} d' & \mbox{if}\ d'\mid d, \\ 0 & \mbox{if}\ d'\nmid d, \end{cases}
\end{eqnarray}
and 
\begin{eqnarray}
\label{DD2CC}\varphi_{C_{n/d'},t}(J_{N_c,A,a}(D_{n}/C_{n/d}))&=&(1+|A'|t^{[d,d']/d'})^{2(d,d')},\\
\label{DD2CD}\varphi_{C_{n/d'},t}(J_{N_c,A,a}(D_{n}/D_{n/d}))&=&(1+|A'|t^{[d,d']/d'})^{(d,d')},\\
\label{DD2CE}\varphi_{C_{n/d'},t}(J_{N_c,A,a}(D_{n}/D'_{n/d}))&=&(1+|A'|t^{[d,d']/d'})^{(d,d')}
\end{eqnarray}
hold. Identities (\ref{DD2CD}) and (\ref{DD2CE}) are equal.

\item [(ii)] We consider the case of $K=C_{n/d}$. We discuss the method similar to (i). We have
\begin{eqnarray*}
\Res{D_{n}}{K}(D_{n}/C_{n/d})]=\sum_{KgC_{n/d}}[ K/K\cap gC_{n/d}g^{-1}]
\end{eqnarray*}
where $g$ ranges over a set of double coset representatives of $K$ and $C_{n/d}$ in $D_{n}$. For any $g\in D_n$, we have $|K\cap gC_{n/d}g^{-1})|=|K\cap C_{n/d}|$. In addition, the number of double cosets of $C_{n/d}$ and $K$ is $|D_{n}/C_{n/d}K|$. Thus, we have
\begin{eqnarray}\label{DD4CC}
O_{D_{n}/C_{n/d}, K,i}=\begin{cases} |D_n/C_{n/d}K| & \mbox{if}\ i=|K/(K\cap C_{n/d})|, \\ 0 & \mbox{otherwise}. \end{cases}
\end{eqnarray}
Hence, we have
\begin{eqnarray*}
\varphi_{K}(D_{n}/C_{n/d})=\begin{cases} |D_{n}/C_{n/d}K| & \mbox{if}\ K\subset C_{n/d}, \\ 0 & \mbox{otherwise}, \end{cases}
\end{eqnarray*}
and by (\ref{73eq1}) and (\ref{DD4CC}), we have
\begin{eqnarray*}
\label{DD1}\varphi_{K,t}(J_{N_c,A,a}(D_{n}/C_{n/d}))=(1+|A'|t^{|K/K\cap C_{n/d'})|})^{|D_{n}/C_{n/d}K|}.
\end{eqnarray*}
In particular,
\begin{eqnarray}
\label{DD1CC2}\varphi_{C_{n/d'}}(D_{n}/C_{n/d})&=&\begin{cases} 2d' & \mbox{if}\ d'\mid d, \\ 0 & \mbox{if}\ d'\nmid d, \end{cases}\\
\label{DD1DC}\varphi_{D_{n/d'}}(D_{n}/C_{n/d})&=&0,\\
\label{DD1EC}\varphi_{D'_{n/d'}}(D_{n}/C_{n/d})&=&0,
\end{eqnarray}
and 
\begin{eqnarray}
\label{DD2CC2}\varphi_{C_{n/d'},t}(J_{N_c,A,a}(D_{n}/C_{n/d}))&=&(1+|A'|t^{[d,d']/d'})^{2(d,d')},\\
\label{DD2DC}\varphi_{D_{n/d'},t}(J_{N_c,A,a}(D_{n}/C_{n/d}))&=&(1+|A'|t^{2[d,d']/d'})^{(d,d')},\\
\label{DD2EC}\varphi_{D'_{n/d'},t}(J_{N_c,A,a}(D_{n}/C_{n/d}))&=&(1+|A'|t^{2[d,d']/d'})^{(d,d')}
\end{eqnarray}
hold. We remark that identities (\ref{DD1CC}) and (\ref{DD1CC2}), or (\ref{DD2CC}) and (\ref{DD2CC2}) are same calculation results respectively, and identities (\ref{DD2DC}) and (\ref{DD2EC}) are equal.

\item [(iii)] We consider the case of $K=D_{n/d}$ and $K'=D_{n/d'}$. For any integer $i\geq 0$, we have
\[ D_{n/d'}a^i D_{n/d}=a^i C_{n/(d,d')} \cup a^{-i} C_{n/(d,d')} \cup ba^i C_{n/(d,d')} \cup ba^{-i} C_{n/(d,d')}\]
which gives the following double cosets representation,
\begin{eqnarray*}\label{DD5}
D_{n}=\begin{cases}\bigcup_{i=0}^{(d,d')/2}D_{n/d'}a^i D_{n/d} & \mbox{if}\ 2\mid (d,d'), \\ \bigcup_{i=0}^{((d,d')-1)/2}D_{n/d'}a^i D_{n/d} & \mbox{if}\ 2\nmid (d,d').\end{cases}
\end{eqnarray*}
Next, we investigate a subgroup $D_{n/d'}\cap a^{i} D_{n/d} a^{-i}$ for any integer $i\geq 0$ to consider orbits of $\Res{D_{n}}{D_{n/d'}}(D_{n}/D_{n/d})$. A subgroup $D_{n/d'}\cap a^{i} D_{n/d} a^{-i}$ is conjugate to $C_{n/[d,d']}, D_{n/[d,d']}$ or $D'_{n/[d,d']}$. An integer $i\geq 0$ satisfies $2i\mid (d,d')$ if and only if $D_{n/d'}\cap a^{i} D_{n/d} a^{-i}$ has an element which has the form $ba^k$. \par 

If $2\mid (d,d')$, then $D_{n/d'}\cap a^{i} D_{n/d} a^{-i}$ is conjugate to $D_{n/[d,d']}$ when $i=0$ or $(d,d')/2$, and is conjugate to $C_{n/[d,d']}$ when $i\neq 0$ and $(d,d')/2$. Then, we have
\begin{eqnarray}\label{DD3DD}
 O_{D_{n}/D_{n/d}, D_{n/d'},i}=\begin{cases} 2 & \mbox{if}\ i=\frac{[d,d']}{d'},\\ \frac{(d,d')}{2}-1 & \mbox{if}\ i=\frac{2[d,d']}{d'}, \\ 0 & \mbox{otherwise}. \end{cases}
\end{eqnarray}
Hence, we have
\begin{eqnarray}
\label{DD1DD}\varphi_{D_{n/d'}}(D_{n}/D_{n/d})=\begin{cases} 2 & \mbox{if}\ d'\mid d, \\ 0& \mbox{if}\ d'\nmid d,\end{cases}
\end{eqnarray}
and by (\ref{73eq1}) and (\ref{DD3DD}), we have
\begin{eqnarray}
\begin{split}
\label{DD2DD}
&\varphi_{D_{n/d'},t}(J_{N_c,A,a}(D_{n}/D_{n/d}))\\
=&(1+|A'|t^{[d,d']/d'})^2(1+|A'|t^{2[d,d']/d'})^{((d,d')/2)-1}.
\end{split}
\end{eqnarray}
If $2\nmid (d,d')$, then $D_{n/d'}\cap a^{i} D_{n/d} a^{-i}$ is conjugate to $D_{n/[d,d']}$ when $i=0$ and is conjugate to $C_{n/[d,d']}$ when $i\neq 0$. Then, we have
\begin{eqnarray}\label{DD4DD}
 O_{D_{n}/D_{n/d}, D_{n/d'},i}=\begin{cases} 1 & \mbox{if}\ i=\frac{[d,d']}{d'},\\ \frac{(d,d')-1}{2} & \mbox{if}\ i=\frac{2[d,d']}{d'}, \\ 0 & \mbox{otherwise}. \end{cases}
\end{eqnarray}
Hence, we have
\begin{eqnarray}
\label{DD1DDD}\varphi_{D_{n/d'}}(D_{n}/D_{n/d})=\begin{cases} 1 & \mbox{if}\ d'\mid d, \\ 0& \mbox{if}\ d'\nmid d,\end{cases}
\end{eqnarray}
and by (\ref{73eq1}) and (\ref{DD4DD}), we have
\begin{eqnarray}
\begin{split}
\label{DD2DDD}
&\varphi_{D_{n/d'},t}(J_{N_c,A,a}(D_{n}/D_{n/d}))\\
=&(1+|A'|t^{[d,d']/d'})(1+|A'|t^{2[d,d']/d'})^{((d,d')-1)/2}.
\end{split}
\end{eqnarray}

\item [(iv)] We consider the case of $K=D_{n/d}$ and $K'=D'_{n/d'}$. For any integer $i\geq 0$ we have
\[ D'_{n/d'}a^i D_{n/d}=a^i C_{n/(d,d')} \cup a^{-(i+1)} C_{n/(d,d')} \cup ba^{i+1} C_{n/(d,d')} \cup ba^{-i} C_{n/(d,d')}\]
which gives the following double cosets representation.
\begin{eqnarray}\label{DD8}
D_{n}=\begin{cases}\bigcup_{i=0}^{((d,d')/2)-1}D'_{n/d'}a^i D_{n/d} & \mbox{if}\ 2\mid (d,d'), \\ \bigcup_{i=0}^{((d,d')-1)/2}D'_{n/d'}a^i D_{n/d} & \mbox{if}\ 2\nmid (d,d').\end{cases}
\end{eqnarray}
A subgroup $D'_{n/d'}\cap a^{i} D_{n/d} a^{-i}$ is conjugate to either $C_{n/[d,d']}$, $D_{n/[d,d']}$ or $D'_{n/[d,d']}$. An integer $i\geq 0$ satisfies $2i+1\mid (d,d')$ if and only if $D_{n/d'}\cap a^{i} D_{n/d} a^{-i}$ has an element which has the form $ba^k$. \par 
If $2\mid (d,d')$, then there exists no integer $i\geq 0$ such that $D'_{n/d'}\cap a^{i} D_{n/d} a^{-i}$ is conjugate to $D_{n/[d,d']}$ or $D'_{n/[d,d']}$. Then, we have
\begin{eqnarray}\label{DD3DE}
O_{D_{n}/D_{n/d}, D_{n/d'}',i}=\begin{cases} \frac{(d,d')}{2} & \mbox{if}\ i=\frac{2[d,d']}{d'}, \\ 0 & \mbox{otherwise} \end{cases}
\end{eqnarray}
which is equal to (\ref{DD4DD}). Hence, we have
\begin{eqnarray}\label{DD1DE}
\varphi_{D'_{n/d'}}(D_{n}/D_{n/d})=0,
\end{eqnarray}
and by (\ref{73eq1}) and (\ref{DD3DE}), we have
\begin{eqnarray}\label{DD2DE}
\begin{split}
\varphi_{D'_{n/d'},t}(J_{N_c,A,a}(D_{n}/D_{n/d}))=(1+|A'|t^{2[d,d']/d'})^{(d,d')/2}.
\end{split}
\end{eqnarray}

If $2\nmid (d,d')$, then $D_{n/d'}\cap a^{i} D_{n/d} a^{-i}$ is conjugate to $D_{n/[d,d']}$ when $i=0$ and is conjugate to $C_{n/[d,d']}$ when $i\neq 0$. Then, we have
\begin{eqnarray}\label{DD4DE}
 O_{D_{n}/D_{n/d}, D'_{n/d'},i}=\begin{cases} 1 & \mbox{if}\ i=\frac{[d,d']}{d'},\\ \frac{(d,d')-1}{2} & \mbox{if}\ i=\frac{2[d,d']}{d'}, \\ 0 & \mbox{otherwise}. \end{cases}
\end{eqnarray}
Hence, we have
\begin{eqnarray}\label{DD1DEE}
\varphi_{D'_{n/d'}}(D_{n}/D_{n/d})=\begin{cases} 1 &\mbox{if}\ d'\mid d,\\ 0&\mbox{if}\ d'\nmid d,\end{cases}
\end{eqnarray}
which is equal to (\ref{DD1DDD}), and by (\ref{73eq1}) and (\ref{DD4DE}), we have
\begin{eqnarray}\label{DD2DEE}
\begin{split}
&\varphi_{D'_{n/d'},t}(J_{N_c,A,a}(D_{n}/D_{n/d}))\\
=&(1+|A'|t^{[d,d']/d'})(1+|A'|t^{2[d,d']/d'})^{((d,d')-1)/2}
\end{split}
\end{eqnarray}
which is equal to (\ref{DD2DDD}).

\item [(v)] We consider the case of $K=D'_{n/d}$ and $K'=D_{n/d'}$ with the method similar to (iv). By (\ref{DD8}), we have
\begin{eqnarray}\label{DD8}
D_{n}=\begin{cases}\bigcup_{i=0}^{((d,d')/2)-1}D_{n/d'}a^{-i} D'_{n/d} & \mbox{if}\ 2\mid (d,d'), \\ \bigcup_{i=0}^{((d,d')-1)/2}D_{n/d'}a^{-i} D'_{n/d} & \mbox{if}\ 2\nmid (d,d').\end{cases}
\end{eqnarray}
If $2\mid (d,d')$, then there exists no integer $i\geq 0$ such that $D'_{n/d}\cap a^{-i} D_{n/d'} a^{i}$ is conjugate to $D_{n/[d,d']}$ or $D'_{n/[d,d']}$. Then, we have
\begin{eqnarray}\label{DD3ED}
O_{D_{n}/D'_{n/d}, D_{n/d'},i}=\begin{cases} \frac{(d,d')}{2} & \mbox{if}\ i=\frac{2[d,d']}{d'}, \\ 0 & \mbox{otherwise}. \end{cases}
\end{eqnarray}
Hence, we have
\begin{eqnarray}\label{DD1ED}
\varphi_{D_{n/d'}}(D_{n}/D'_{n/d})=0,
\end{eqnarray}
and by (\ref{73eq1}) and (\ref{DD3ED}), we have
\begin{eqnarray}
\begin{split}\label{DD2ED}
\varphi_{D_{n/d'},t}(J_{N_c,A,a}(D_{n}/D'_{n/d}))=(1+|A'|t^{2[d,d']/d'})^{(d,d')/2}.
\end{split}
\end{eqnarray}
If $2\nmid (d,d')$, then $D_{n/d'}\cap a^{i} D_{n/d} a^{-i}$ is conjugate to $D_{n/[d,d']}$ when $i=0$ and is conjugate to $C_{n/[d,d']}$ when $i\neq 0$. Thus, we have
\begin{eqnarray}\label{DD4ED}
 O_{D_{n}/D'_{n/d}, D_{n/d'},i}=\begin{cases} 1 & \mbox{if}\ i=\frac{[d,d']}{d'},\\ \frac{(d,d')-1}{2} & \mbox{if}\ i=\frac{2[d,d']}{d'}, \\ 0 & \mbox{otherwise}. \end{cases}
\end{eqnarray}
Hence, we have
\begin{eqnarray}\label{DD1EDD}
\varphi_{D_{n/d'}}(D_{n}/D_{n/d})=\begin{cases} 1 &\mbox{if}\ d'\mid d,\\ 0&\mbox{if}\ d'\nmid d,\end{cases}
\end{eqnarray}
and by (\ref{73eq1}) and (\ref{DD4ED}), we have
\begin{eqnarray}
\begin{split}
\label{DD2EDD}
&\varphi_{D_{n/d'},t}(J_{N_c,A,a}(D_{n}/D'_{n/d}))\\
=&(1+|A'|t^{[d,d']/d'})(1+|A'|t^{2[d,d']/d'})^{((d,d')-1)/2}.
\end{split}
\end{eqnarray}

\item [(vi)] We assume $2\mid n$, and we consider when $K=D'_{n/d}$ and $K'=D'_{n/d'}$. For any integer $i\geq 0$ we have\[ D'_{n/d'}a^i D'_{n/d}=a^i C_{n/(d,d')} \cup a^{-i} C_{n/(d,d')} \cup ba^{i+1} C_{n/(d,d')} \cup ba^{-i+1} C_{n/(d,d')}\]
which gives the following double cosets representation,
\begin{eqnarray}\label{DD11}
D_{n}=\begin{cases}\bigcup_{i=0}^{(d,d')/2}D'_{n/d'}a^i D'_{n/d} & \mbox{if}\ 2\mid (d,d'), \\ \bigcup_{i=0}^{((d,d')-1)/2}D'_{n/d'}a^i D'_{n/d} & \mbox{if}\ 2\nmid (d,d').\end{cases}
\end{eqnarray}
A subgroup $D'_{n/d'}\cap a^{i} D'_{n/d} a^{-i}$ is conjugate to $C_{n/[d,d']}$, $D_{n/[d,d']}$ or $D'_{n/[d,d']}$. An integer $i\geq 0$ satisfies $2i\mid (d,d')$ if and only if $D'_{n/d'}\cap a^{i} D'_{n/d} a^{-i}$ has an element which has a form $ba^k$. Then we have
\begin{eqnarray}\label{DD4EE}
 O_{D_{n}/D'_{n/d}, D_{n/d'},i}=\begin{cases} 2 & \mbox{if}\ i=\frac{[d,d']}{d'},\\ \frac{(d,d')}{2}-1 & \mbox{if}\ i=\frac{2[d,d']}{d'}, \\ 0 & \mbox{otherwise}. \end{cases}
\end{eqnarray}
Hence, we have
\begin{eqnarray}\label{DD1EE}
\varphi_{D'_{n/d'}}(D'_{n}/D'_{n/d})=\begin{cases} 2 & \mbox{if}\ d'\mid d, \\ 0 & \mbox{if}\ d'\nmid d,\end{cases} 
\end{eqnarray}
which is equal to (\ref{DD1DD}), and by (\ref{73eq1}) and (\ref{DD4EE}), we have
\begin{eqnarray}
\begin{split}
\label{DD2EE}
&\varphi_{D'_{n/d'},t}(J_{N_c,A,a}(D_{n}/D'_{n/d}))\\
=&(1+|A'|t^{[d,d']/d'})^2(1+|A'|t^{2[d,d']/d'})^{((d,d')/2)-1}
\end{split}
\end{eqnarray}
which is equal to (\ref{DD2DD}).
\end{itemize}
Next, we represent $\mu_{H}(X)$ with $\varphi_{V}(X)$ for any $D_{n}$-set $X$. 
\begin{prop}\label{DDD2}
For any $D_{n}$-set $X$ and divisor $d$ of $n$, the followings hold,\\
$(1)$
\begin{eqnarray*}
&{}&\dfrac{1}{d}\sum_{d'\mid d}\mu\Big(\dfrac{d}{d'}\Big)\varphi_{C_{n/d'}}(X)\\
&=&\begin{cases} 2\mu_{C_{n/d}}(X)+\mu_{D_{n/d}}(X) & \mbox{if}\ 2\nmid d,\\ 2\mu_{C_{n/d}}(X)+\mu_{D_{n/d}}(X)+\mu_{D'_{n/d}}(X) & \mbox{if}\ 2\mid d.\end{cases}
\end{eqnarray*}
$(2)$
\begin{eqnarray*}
\sum_{d'\mid d}\mu\Big(\dfrac{d}{d'}\Big)\varphi_{D_{n/d'}}(X)=\begin{cases} \mu_{D_{n/d}}(X) & \mbox{if}\ 2\nmid d, \\ 2\mu_{D_{n/d}}(X)& \mbox{if}\ 2\mid d. \end{cases}
\end{eqnarray*}
$(3)$ If $2\mid d$, then 
\begin{eqnarray*}
\sum_{d'\mid d}\mu\Big(\dfrac{d}{d'}\Big)\varphi_{D'_{n/d'}}(X)=2\mu_{D'_{n/d}}(X)
\end{eqnarray*}
holds.
\end{prop}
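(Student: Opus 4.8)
The plan is to reproduce, for the three families of conjugacy classes $C_{n/d}$, $D_{n/d}$, $D'_{n/d}$ of $D_n$ (Proposition \ref{CongDn}), the Möbius-inversion scheme carried out for cyclic groups in Example \ref{EXcr}. I would begin from the Burnside decomposition
\[ [X]=\sum_{H\in\Phi(D_n)}\mu_H(X)[D_n/H], \]
apply each additive super-character $\varphi_{C_{n/d'}}$, $\varphi_{D_{n/d'}}$, $\varphi_{D'_{n/d'}}$ (additivity being Proposition \ref{super}(3)), and substitute the values of $\varphi_V(D_n/H)$ computed in (\ref{DD1CC})--(\ref{DD1EE}). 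Collecting the nonzero contributions, each $\varphi_{V}(X)$ becomes a sum over the \emph{divisors} $e$ of $d'$ of an orbit-length factor times a fixed $\mathbb{Z}$-combination of the $\mu_H(X)$; forming $\sum_{d'\mid d}\mu(d/d')(\cdots)$ and inverting as in the passage from (\ref{Cn1}) to (\ref{Cn2}) then isolates the term indexed by $d$.

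For part $(1)$, substituting (\ref{DD1CC}), (\ref{DD1CD}), (\ref{DD1CE}) and discarding the classes that contribute zero gives
\[ \varphi_{C_{n/d'}}(X)=\sum_{e\mid d'}e\,\Psi(e),\qquad \Psi(e)=2\mu_{C_{n/e}}(X)+\mu_{D_{n/e}}(X)+\mu_{D'_{n/e}}(X), \]
where the last summand of $\Psi(e)$ is present only for even $e$, since $D'_{n/e}$ is a conjugacy class distinct from $D_{n/e}$ exactly when $2\mid e$ (Proposition \ref{CongDn}). Möbius inversion over the divisors of $d$ yields $\Psi(d)=\frac{1}{d}\sum_{d'\mid d}\mu(d/d')\varphi_{C_{n/d'}}(X)$, and reading off $\Psi(d)$ --- which is $2\mu_{C_{n/d}}(X)+\mu_{D_{n/d}}(X)$ for odd $d$ and additionally carries $\mu_{D'_{n/d}}(X)$ for even $d$ --- gives precisely the stated case split.

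Parts $(2)$ and $(3)$ run identically, the orbit-length factors now being $1$ or $2$ according to parity. From (\ref{DD1DC}), (\ref{DD1DD}), (\ref{DD1DDD}), (\ref{DD1ED}), (\ref{DD1EDD}) one gets $\varphi_{D_{n/d'}}(X)=\sum_{e\mid d'}c_e\,\mu_{D_{n/e}}(X)$ with $c_e=1$ for odd $e$ and $c_e=2$ for even $e$ --- the contributions of $C_{n/e}$ and of $D'_{n/e}$ vanishing --- so inversion gives $\sum_{d'\mid d}\mu(d/d')\varphi_{D_{n/d'}}(X)=c_d\mu_{D_{n/d}}(X)$, which is $(2)$. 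For $(3)$, with $2\mid d$, the identifications $\varphi_{D'_{n/d'}}=\varphi_{D_{n/d'}}$ for odd $d'$ (Proposition \ref{super}(2) with the conjugacy $D'_{n/d'}\sim D_{n/d'}$) together with (\ref{DD1EC}), (\ref{DD1DE}), (\ref{DD1DEE}), (\ref{DD1EE}) yield $\varphi_{D'_{n/d'}}(X)=\sum_{e\mid d'}g(e)$, where $g(e)=\mu_{D_{n/e}}(X)$ for odd $e$ and $g(e)=2\mu_{D'_{n/e}}(X)$ for even $e$; inverting and evaluating at the even index $d$ returns $2\mu_{D'_{n/d}}(X)$.

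The Möbius inversion itself is routine --- it is literally Example \ref{EXcr}. The part demanding care, and where I expect the real work to lie, is the bookkeeping that turns the raw data (\ref{DD1CC})--(\ref{DD1EE}) into these clean divisor sums: one must keep the three subgroup families separate, attach the correct orbit-length and multiplicity factors, and, above all, track the even/odd dichotomy so that $D'_{n/e}$ is counted as its own conjugacy class exactly when $e$ is even and is absorbed into $D_{n/e}$ when $e$ is odd. Checking that all contributions assemble into sums indexed by the divisors of $d'$ --- so that the inversion formula of Example \ref{EXcr} applies verbatim --- is the crux.
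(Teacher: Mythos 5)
Your proposal is correct and follows essentially the same route as the paper: the paper likewise expands $[X]$ over the conjugacy classes of Proposition \ref{CongDn} (its equation (\ref{DDD1})), applies the super characters using the fixed-point data of \S 4.1.1 to obtain exactly your divisor sums (its (\ref{DDD21})--(\ref{DDD23})), and then applies \Mobius\ inversion as in Example \ref{EXcr}. Your explicit attention to the parity bookkeeping --- $D'_{n/e}$ being a separate class only for even $e$ --- is precisely the point the paper's case split encodes.
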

\begin{proof}
By Proposition \ref{CongDn}, an arbitrary $D_{n}$-set $X$ has the following form,
\begin{eqnarray}\label{DDD1}
\begin{split}
[X]=&\sum_{d'\mid n, 2\nmid d'}\Big( \mu_{C_{n/d'}}(X)[D_{n}/C_{n/d'}]+\mu_{D_{n/d'}}(X)[D_{n}/D_{n/d'}]\Big)\\
&+ \sum_{d'\mid n, 2\mid d'} \Big( \mu_{C_{n/d'}}(X)[D_{n}/C_{n/d'}]+ \mu_{D_{n/d'}}(X) [D_{n}/D_{n/d'}]\\
&+ \mu_{D_{n/d'}}(X)[D_{n}/D'_{n/d'}]\Big).
\end{split}
\end{eqnarray}
By (\ref{DDD1}), (\ref{DD1CC}), (\ref{DD1CD}) and (\ref{DD1CE}), we have
\begin{eqnarray}
\begin{split}\label{DDD21}
\varphi_{C_{n/d}}(X)=&\sum_{d'\mid d, 2\nmid d'} (2d'\mu_{C_{n/d'}}(X) + d'\mu_{D_{n/d'}}(X))\\
&+\sum_{d'\mid d, 2\mid d'} (2d'\mu_{C_{n/d'}}(X)+ d'\mu_{D_{n/d'}}(X)+d'\mu_{D_{n/d'}}(X)).
\end{split}
\end{eqnarray}
Thus, we have the identity $(1)$ by (\ref{DDD21}) and the \Mobius\ inversion formula.

By (\ref{DDD1}), (\ref{DD1DC}), (\ref{DD1DD}), (\ref{DD1DDD}), (\ref{DD1ED}) and (\ref{DD1EDD}) we have
\begin{eqnarray}\label{DDD22}
\varphi_{D_{n/d}}(X)&=&\sum_{d'\mid d, 2\nmid d'}\mu_{D_{n/d'}}(X)+\sum_{d'\mid d, 2\mid d'}2\mu_{D_{n/d'}}(X).
\end{eqnarray}
Thus, we have the identity $(2)$ by (\ref{DDD22}) and the \Mobius\ inversion formula.

By (\ref{DDD1}), (\ref{DD1EC}), (\ref{DD1ED}), (\ref{DD1EDD}) and (\ref{DD1EE}) we have 
\begin{eqnarray}\label{DDD23}
\varphi_{D_{n/d'}}(X)&=&\sum_{d'\mid d, 2\nmid d'}\mu_{D_{n/d'}}(X)+\sum_{d'\mid d, 2\mid d'}2\mu_{D'_{n/d'}}(X).
\end{eqnarray}
Thus, we have the identity $(3)$ by (\ref{DDD23}) and the \Mobius\ inversion formula.
\end{proof}

By Proposition \ref{DDD2}, we have the following proposition.
\begin{prop}\label{DDDD2}
For any $D_{n}$-set $X$ and divisor $d$ of $n$, the followings hold,\\
$(1)$
\begin{eqnarray*}
&{}&\dfrac{1}{d}\sum_{d'\mid d}\mu\Big(\dfrac{d}{d'}\Big)\varphi_{C_{n/d'},t}(J_{N_c,A,a}(X))\\
&=&\begin{cases} 2\mu_{C_{n/d},t}(J_{N_c,A,a}(X))+\mu_{D_{n/d},t}(J_{N_c,A,a}(X)) & \mbox{if}\ 2\nmid d,\\ 2\mu_{C_{n/d},t}(J_{N_c,A,a}(X))\\
+\mu_{D_{n/d},t}(J_{N_c,A,a}(X))+\mu_{D'_{n/d},t}(J_{N_c,A,a}(X)) & \mbox{if}\ 2\mid d.\end{cases}
\end{eqnarray*}
$(2)$
\begin{eqnarray*}
\sum_{d'\mid d}\mu\Big(\dfrac{d}{d'}\Big)\varphi_{D_{n/d'},t}(J_{N_c,A,a}(X))=\begin{cases} \mu_{D_{n/d},t}(J_{N_c,A,a}(X)) & \mbox{if}\ 2\nmid d, \\ 2\mu_{D_{n/d},t}(J_{N_c,A,a}(X))& \mbox{if}\ 2\mid d. \end{cases}
\end{eqnarray*}
$(3)$ If $2\mid d$, then 
\begin{eqnarray*}
&{}&\sum_{d'\mid d}\mu\Big(\dfrac{d}{d'}\Big)\varphi_{D'_{n/d'},t}(J_{N_c,A,a}(X))=2\mu_{D'_{n/d},t}(J_{N_c,A,a}(X))
\end{eqnarray*}
holds.
\end{prop}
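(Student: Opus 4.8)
The plan is to deduce each of the three identities coefficientwise from the corresponding identity of Proposition \ref{DDD2}, which already holds for an \emph{arbitrary} $D_n$-set. The crucial observation is that, by Proposition \ref{JNProp}, for every non-negative integer $i$ the set $J^i_{N_c,A,a}(X)$ is itself a finite $D_n$-set, so Proposition \ref{DDD2} may be applied to it with $J^i_{N_c,A,a}(X)$ in place of the generic $X$.

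First I would fix a divisor $d$ of $n$ and, for part (1), substitute the $D_n$-set $J^i_{N_c,A,a}(X)$ into part (1) of Proposition \ref{DDD2}. In the case $2\nmid d$ this reads
\[ \dfrac{1}{d}\sum_{d'\mid d}\mu\Big(\dfrac{d}{d'}\Big)\varphi_{C_{n/d'}}(J^i_{N_c,A,a}(X))=2\mu_{C_{n/d}}(J^i_{N_c,A,a}(X))+\mu_{D_{n/d}}(J^i_{N_c,A,a}(X)), \]
and analogously in the case $2\mid d$. I would then multiply both sides by $t^i$ and sum over all $i\geq 0$. Because the \Mobius\ coefficients $\mu(d/d')$ and the factor $1/d$ do not depend on $i$, and because the inner sum over divisors $d'$ of $d$ is finite, these constants pass freely through the summation over $i$. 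By Definition \ref{generating} the series $\sum_{i\geq 0}\varphi_{C_{n/d'}}(J^i_{N_c,A,a}(X))t^i$ is exactly $\varphi_{C_{n/d'},t}(J_{N_c,A,a}(X))$, and each $\sum_{i\geq 0}\mu_{W}(J^i_{N_c,A,a}(X))t^i$ is $\mu_{W,t}(J_{N_c,A,a}(X))$ for $W=C_{n/d}, D_{n/d}, D'_{n/d}$; this produces precisely the stated form of part (1).

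For parts (2) and (3) I would repeat the identical procedure, substituting $J^i_{N_c,A,a}(X)$ into parts (2) and (3) of Proposition \ref{DDD2}, multiplying by $t^i$, and summing over $i\geq 0$. The entire computation takes place in the ring of formal power series in $t$, so no question of convergence arises: the identity among generating functions holds because it holds in each degree $i$ separately, for the individual finite $D_n$-set $J^i_{N_c,A,a}(X)$.

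Since the whole argument is a coefficientwise transfer of already-established linear relations into generating functions, there is no genuine obstacle. The only two points that must be verified are that Proposition \ref{DDD2} is legitimately applicable to each graded piece $J^i_{N_c,A,a}(X)$ — guaranteed by the finiteness and $G$-stability recorded in Proposition \ref{JNProp} — and that the scalar coefficients in the divisor sums commute with the formal summation over $i$, which is immediate as we are working with formal power series.
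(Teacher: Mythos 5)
Your proposal is correct and follows essentially the same route as the paper: the paper's proof also applies Proposition \ref{DDD2} to each graded piece $J^i_{N_c,A,a}(X)$ and assembles the result coefficientwise into the generating functions, with the \Mobius\ coefficients passing through the formal sum over $i$. Your explicit appeal to Proposition \ref{JNProp} to justify that each $J^i_{N_c,A,a}(X)$ is a finite $D_n$-set is a small but welcome addition of rigor that the paper leaves implicit.
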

Note that $(2)$ and $(3)$ are equal if $2\mid d$ holds.
\begin{proof}
To proof this Proposition \ref{DDDD2}, we consider the coefficient of $t^n$ in these elements and use Proposition \ref{DDD2}.

(1) If $2\nmid d$, we have
\begin{eqnarray*}
&{}&\dfrac{1}{d}\sum_{d'\mid d}\mu\Big(\dfrac{d}{d'}\Big)\varphi_{C_{n/d'},t}(J_{N_c,A,a}(X))\\
&=&\dfrac{1}{d}\sum_{d'\mid d}\sum_{i=0}^{\infty}\mu\Big(\dfrac{d}{d'}\Big)\varphi_{C_{n/d'}}(J^i_{N_c,A,a}(X))t^i\\
&=&\sum_{i=0}^{\infty}(2\mu_{C_{n/d}}(J^i_{N_c,A,a}(X))+\mu_{D_{n/d}}(J^i_{N_c,A,a}(X)))t^i\\
&=&2\mu_{C_{n/d},t}(J_{N_c,A,a}(X))+\mu_{D_{n/d},t}(J_{N_c,A,a}(X)).
\end{eqnarray*}
Similarly, if $2\mid d$, we have
\begin{eqnarray*}
&{}&\dfrac{1}{d}\sum_{d'\mid d}\mu\Big(\dfrac{d}{d'}\Big)\varphi_{C_{n/d'},t}(J_{N_c,A,a}(X))\\
&=&\dfrac{1}{d}\sum_{d'\mid d}\sum_{i=0}^{\infty}\mu\Big(\dfrac{d}{d'}\Big)\varphi_{C_{n/d'}}(J^i_{N_c,A,a}(X))t^i\\
&=&\sum_{i=0}^{\infty}(2\mu_{C_{n/d'}}(J^i_{N_c,A,a}(X))+\mu_{D_{n/d}}(J^i_{N_c,A,a}(X))+\mu_{D'_{n/d}}(J^i_{N_c,A,a}(X)))t^i\\
&=&2\mu_{C_{n/d},t}(J_{N_c,A,a}(X))+\mu_{D_{n/d},t}(J_{N_c,A,a}(X))+\mu_{D_{n/d'},t}(J_{N_c,A,a}(X)).
\end{eqnarray*}

(2) If $2\nmid d$, we have
\begin{eqnarray*}
&{}&\dfrac{1}{d}\sum_{d'\mid d}\mu\Big(\dfrac{d}{d'}\Big)\varphi_{D_{n/d'},t}(J_{N_c,A,a}(X))\\
&=&\dfrac{1}{d}\sum_{d'\mid d}\sum_{i=0}^{\infty}\mu\Big(\dfrac{d}{d'}\Big)\varphi_{D_{n/d'}}(J^i_{N_c,A,a}(X))t^i\\
&=&\sum_{i=0}^{\infty}\mu_{D_{n/d}}(J^i_{N_c,A,a}(X))t^i=\mu_{D_{n/d},t}(J_{N_c,A,a}(X)).
\end{eqnarray*}
Similarly, if $2\mid d$, we have
\begin{eqnarray*}
&{}&\dfrac{1}{d}\sum_{d'\mid d}\mu\Big(\dfrac{d}{d'}\Big)\varphi_{D_{n/d'},t}(J_{N_c,A,a}(X))\\
&=&\dfrac{1}{d}\sum_{d'\mid d}\sum_{i=0}^{\infty}\mu\Big(\dfrac{d}{d'}\Big)\varphi_{D_{n/d'}}(J^i_{N_c,A,a}(X))t^i\\
&=&\sum_{i=0}^{\infty}2\mu_{D_{n/d}}(J^i_{N_c,A,a}(X))t^i=2\mu_{D_{n/d},t}(J_{N_c,A,a}(X)).
\end{eqnarray*}
(3) We assume $2\mid d$. Thus, we have
\begin{eqnarray*}
&{}&\dfrac{1}{d}\sum_{d'\mid d}\mu\Big(\dfrac{d}{d'}\Big)\varphi_{D'_{n/d'},t}(J_{N_c,A,a}(X))\\
&=&\dfrac{1}{d}\sum_{d'\mid d}\sum_{i=0}^{\infty}\mu\Big(\dfrac{d}{d'}\Big)\varphi_{D'_{n/d'}}(J^i_{N_c,A,a}(X))t^i\\
&=&\sum_{i=0}^{\infty}2\mu_{D'_{n/d}}(J^i_{N_c,A,a}(X))t^i=2\mu_{D'_{n/d},t}(J_{N_c,A,a}(X)).
\end{eqnarray*}
\end{proof}

By Proposition \ref{DDDD2}, we have the following corollary.
\begin{cor}\label{DDDDD2}
For any $D_{n}$-set $X$ and divisor $d$ of $n$, the followings hold,\\
$(1)$ If $2\nmid d$, the following identities hold,
\begin{eqnarray}
\label{CAL1D}&{}&\mu_{D_{n/d},t}(J_{N_c,A,a}(X))=\sum_{d'\mid d}\mu\Big(\dfrac{d}{d'}\Big)\varphi_{D_{n/d'},t}(J_{N_c,A,a}(X)),\\
\label{CAL1CC}&{}&\mu_{C_{n/d},t}(J_{N_c,A,a}(X))\\
\nonumber &=&\sum_{d'\mid d}\mu\Big(\dfrac{d}{d'}\Big)\Big(\dfrac{1}{2d}\varphi_{C_{n/d'},t}(J_{N_c,A,a}(X))-\dfrac{1}{2}\varphi_{D_{n/d'},t}(J_{N_c,A,a}(X))\Big).
\end{eqnarray}
$(2)$ If $2\mid d$, the following identities hold,
\begin{eqnarray}
\label{CAL2D}&{}&\mu_{D_{n/d},t}(J_{N_c,A,a}(X))=\dfrac{1}{2}\sum_{d'\mid d}\mu\Big(\dfrac{d}{d'}\Big)\varphi_{D_{n/d'},t}(J_{N_c,A,a}(X)),\\
\label{CAL2DD}&{}&\mu_{D'_{n/d},t}(J_{N_c,A,a}(X))=\dfrac{1}{2}\sum_{d'\mid d}\mu\Big(\dfrac{d}{d'}\Big)\varphi_{D'_{n/d'},t}(J_{N_c,A,a}(X)),\\
\nonumber&{}&\mu_{C_{n/d},t}(J_{N_c,A,a}(X))\\
\label{CAL2C}&=&\dfrac{1}{2d}\sum_{d'\mid d}\mu\Big(\dfrac{d}{d'}\Big)\varphi_{C_{n/d'},t}(J_{N_c,A,a}(X))-\dfrac{1}{2}\mu_{D_{n/d},t}(J_{N_c,A,a}(X))\\
\nonumber&{}&-\dfrac{1}{2}\mu_{D_{n/d},t}(J_{N_c,A,a}(X))\\
\label{CAL2CC}&=&\sum_{d'\mid d}\mu\Big(\dfrac{d}{d'}\Big)\Big(\dfrac{1}{2d}\varphi_{C_{n/d'},t}(J_{N_c,A,a}(X)) -\dfrac{1}{4}\varphi_{D_{n/d'},t}(J_{N_c,A,a}(X))\\
\nonumber&{}&-\dfrac{1}{4}\varphi_{D'_{n/d'},t}(J_{N_c,A,a}(X))\Big).
\end{eqnarray}
\end{cor}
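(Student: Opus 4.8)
The plan is to treat the three identities of Proposition \ref{DDDD2} as a linear system in which the unknowns are the power series $\mu_{C_{n/d},t}(J_{N_c,A,a}(X))$, $\mu_{D_{n/d},t}(J_{N_c,A,a}(X))$, and, when $2\mid d$, also $\mu_{D'_{n/d},t}(J_{N_c,A,a}(X))$, while the right-hand data are the \Mobius-weighted sums of the super-character power series $\varphi_{V,t}(J_{N_c,A,a}(X))$ appearing on the left of Proposition \ref{DDDD2}. Since the \Mobius\ inversion is already built into that proposition, the corollary follows by solving this system; no further \Mobius\ inversion is needed, only elementary rearrangement, which I would carry out separately for the two parities of $d$.

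First I would handle the case $2\nmid d$. Identity $(2)$ of Proposition \ref{DDDD2} reads $\sum_{d'\mid d}\mu(d/d')\varphi_{D_{n/d'},t} = \mu_{D_{n/d},t}$, which is precisely (\ref{CAL1D}). I would then substitute this expression for $\mu_{D_{n/d},t}$ into identity $(1)$, whose right-hand side is $2\mu_{C_{n/d},t}+\mu_{D_{n/d},t}$, and solve for $\mu_{C_{n/d},t}$; collecting both \Mobius\ sums under a single summation over $d'\mid d$ yields the coefficients $\tfrac{1}{2d}$ and $-\tfrac12$ of (\ref{CAL1CC}).

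For $2\mid d$ the same strategy applies, but now identities $(2)$ and $(3)$ carry a factor $2$ on the $\mu$-side, so dividing by $2$ gives (\ref{CAL2D}) and (\ref{CAL2DD}) at once. Here identity $(1)$ couples both dihedral families, its right-hand side being $2\mu_{C_{n/d},t}+\mu_{D_{n/d},t}+\mu_{D'_{n/d},t}$; solving for $\mu_{C_{n/d},t}$ produces (\ref{CAL2C}), and substituting the formulas just obtained for $\mu_{D_{n/d},t}$ and $\mu_{D'_{n/d},t}$ collapses the three \Mobius\ sums into one, giving (\ref{CAL2CC}) with coefficients $\tfrac1{2d}$, $-\tfrac14$, $-\tfrac14$. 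The only point requiring care is the bookkeeping: one must track the parity-dependent factors of $2$ distinguishing the $2\mid d$ and $2\nmid d$ versions of identities $(2)$ and $(3)$, and remember that in the even branch the $C$-equation mixes $D_{n/d}$ and $D'_{n/d}$. There is no conceptual obstacle beyond this linear-algebraic inversion.
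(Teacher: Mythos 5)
Your proposal is correct and follows essentially the same route as the paper: read off the $D$- and $D'$-identities directly from Proposition \ref{DDDD2} (2) and (3) (dividing by $2$ in the even case), then substitute them into Proposition \ref{DDDD2} (1) and solve for $\mu_{C_{n/d},t}(J_{N_c,A,a}(X))$, collecting the \Mobius\ sums to obtain the stated coefficients. The bookkeeping you describe matches the paper's computation exactly.
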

\begin{proof}
We assume $2\nmid d$. The identity about $\mu_{D_{n/d},t}(J_{N_c,A,a}(X))$ holds since Proposition \ref{DDDD2} (2) holds. For $\mu_{C_{n/d},t}(J_{N_c,A,a}(X))$, substitute the identity of Proposition \ref{DDDD2} (2) in (1). Then, we have the identity (\ref{CAL1CC}).

Next, we assume $2\mid d$. The identities (\ref{CAL2DD}) or (\ref{CAL2DD}) holds by Proposition \ref{DDDD2} (2) or (3), respectively. Then, by Proposition \ref{DDDD2} (1), (\ref{CAL2D})and (\ref{CAL2DD}), we have the identity (\ref{CAL2C}) and (\ref{CAL2CC}).
\end{proof}

\subsubsection{The set of all vertices of a regular $n$-prism}
Let $X_n'$ be the set of all vertices of a regular $n$-prism. We can regard the set $X_n'$ as a $D_{n}$-set, and $X_n'$ is $D_{n}$-isomorphic to $D_{n}/C_1$. Substituting $d=n$ in (\ref{DD2CC}), (\ref{DD2DC}) and (\ref{DD2EC}), we have $n/n=1$ and
\begin{eqnarray*}
\varphi_{C_{n/d'},t}(J_{N_c,A,a}(D_{n}/C_{1}))&=&(1+|A'|t^{n/d'})^{2d'},\\
\varphi_{D_{n/d'},t}(J_{N_c,A,a}(D_{n}/C_{1}))&=&(1+|A'|t^{2n/d'})^{d'},\\
\varphi_{D'_{n/d'},t}(J_{N_c,A,a}(D_{n}/C_{1}))&=&(1+|A'|t^{2n/d'})^{d'}.
\end{eqnarray*}
 Let $d$ be a divisor of $n$. If $2\nmid d$, then we substitute these identities in (\ref{CAL1CC}). Then, we have
\begin{eqnarray}\label{Prism1}
\begin{split}
{}&\mu_{C_{n/d},t}(J_{N_c,A,a}(D_{n}/C_{1}))\\
=&\sum_{d'\mid d}\mu\Big(\dfrac{d}{d'}\Big)\Big(\dfrac{1}{2d}(1+|A'|t^{n/d'})^{2d'}-\dfrac{1}{2}(1+|A'|t^{2n/d'})^{d'}\Big).
\end{split}
\end{eqnarray}
If $2\mid d$, them we substitute in (\ref{CAL2C}). Then, we have
\begin{eqnarray*}
&{}&\mu_{C_{n/d},t}(J_{N_c,A,a}(D_{n}/C_{1}))\\
&=&\sum_{d'\mid d}\mu\Big(\dfrac{d}{d'}\Big)\Big(\dfrac{1}{2d}(1+|A'|t^{n/d'})^{2d'}-\dfrac{1}{4}(1+|A'|t^{2n/d'})^{d'}\\
&{}&-\dfrac{1}{4}(1+|A'|t^{2n/d'})^{d'}\Big)\\
&=&\sum_{d'\mid d}\mu\Big(\dfrac{d}{d'}\Big)\Big(\dfrac{1}{2d}(1+|A'|t^{n/d'})^{2d'}-\dfrac{1}{2}(1+|A'|t^{2n/d'})^{d'}\Big).\\
\end{eqnarray*}
In any case, we have (\ref{Prism1}) for any divisor $d$ of $n$. Substituting $d=n$ in (\ref{Prism1}), we have 
\begin{eqnarray}\label{Prism2}
\begin{split}
{}&\mu_{C_{1},t}(J_{N_c,A,a}(D_{n}/C_{1}))\\
=&\sum_{d\mid n}\mu\Big(\dfrac{n}{d}\Big)\Big(\dfrac{1}{2n}(1+|A'|t^{n/d})^{2d}-\dfrac{1}{2}(1+|A'|t^{2n/d})^{d}\Big)
\end{split}
\end{eqnarray}
and substituting $t=1$ in (\ref{Prism2}), we have
\begin{eqnarray*}
&{}&\mu_{C_{1}}(J_{N_c,A,a}(D_{n}/C_{1}))\\
&=&\dfrac{1}{2}\sum_{d\mid n}\mu\Big(\dfrac{n}{d}\Big)\Big(\dfrac{1}{n}|A|^{2d}-|A|^{d}\Big)=\dfrac{1}{2}(M(|A|^2,n)-nM(|A|,n))
\end{eqnarray*}
by Lemma \ref{73eqlemm2}, which is the number of primitive colorings on the set of all vertices of the $n$-prism.
\begin{exa}
We consider the case of $A=\{0,1\}$, $a=0$ and $n=3$. Substitute these condition and $d=n$ in (\ref{Prism2}), we have 
\begin{eqnarray*}
&{}&\mu_{C_{1},t}(J_{N_c,A,a}(D_3/C_1))\\
&=&\dfrac{1}{2}\Big(-\dfrac{1}{3}(1+t^3)^2+(1+t^6)+\dfrac{1}{3}(1+t)^6-(1+t^2)^3\Big)\\
&=&t+t^2+3t^3+t^4+t^5.
\end{eqnarray*}
In particular, we have $\mu_{C_{1}}(J_{N_c,A,a}(D_3/C_1))=7$. All results of placing $2$ colored on the vertices of a regular $3$-prism are described in Figure 4.
\begin{figure}[h]\label{Img3}
	\centering
	\includegraphics[width=10cm,height=10cm]{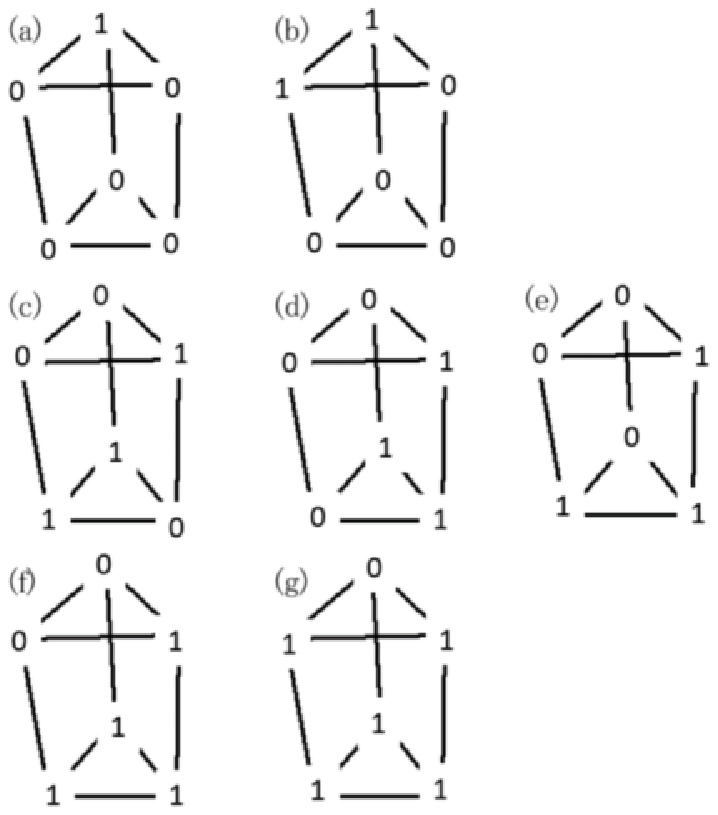}\\
	Figure 4: Primitive colorings on vertices of a regular 3-prism of $D_3$ with $2$-colors $\{0,1\}$.
\end{figure}
In Figure 4, the image (a), (b), or (f) is the unique primitive coloring which has five, four or two $0$-colored places, respectively, images (c), (d) and (e) are primitive colorings which have $3$-times $0$-colored places, and the image (g) is the unique primitive coloring which has one $0$-colored place.
\end{exa}

\subsubsection{The set of all vertices of a regular $n$-gon with the dihedral group}
Let $X_n$ be the set of all vertices of a $n$-gon. In \S 4.1, we considered cycle rotations. In addition, we consider cycle rotations and reverses of a regular $n$-gon. We assume that the set $X_n$ is a $D_{n}$-set which is $D_{n}$-isomorphic to $D_{n}/D_1$, and we calculate $\mu_{C_1,t}(J_{N_c,A,a}(D_{n}/D_1))$ and $\mu_{C_1}(J_{N_c,A,a}(D_{n}/D_1))$.

In the remainder of this section, we define a polynomial $M(k,n)\in\mathbb{C}[k]$ by (\ref{necklace}). There are cases that we substitute $k=\sqrt{A}$ in $M(k,n)$.

Substituting $d=n$ in (\ref{DD2CD}), we have
\[\varphi_{C_{n/d'},t}(J_{N_c,A,a}(D_{n}/D_{1}))=(1+|A'|t^{n/d'})^{d'}.\]
A divisor $d'$ of $n$ satisfies $2\mid d'$ if and only if $2\mid (n,d')$. Thus, if $2\mid d'$, then we have
\begin{eqnarray*}
\varphi_{D_{n/d'},t}(J_{N_c,A,a}(D_{n}/D_{1}))&=&(1+|A'|t^{n/d'})^2 (1+|A'|t^{2n/d'})^{(d'/2)-1},\\
\varphi_{D'_{n/d'},t}(J_{N_c,A,a}(D_{n}/D_{1}))&=&(1+|A'|t^{2n/d'})^{d'/2},
\end{eqnarray*}
since we substitute $d=n$ in (\ref{DD2DD}) and (\ref{DD2DE}).

If $2\nmid d'$, then we have
\begin{eqnarray*}
\varphi_{D_{n/d'},t}(J_{N_c,A,a}(D_{n}/D_{1}))&=&(1+|A'|t^{n/d'})(1+|A'|t^{2n/d'})^{(d'-1)/2},\\
\varphi_{D'_{n/d'},t}(J_{N_c,A,a}(D_{n}/D_{1}))&=&(1+|A'|t^{n/d'})(1+|A'|t^{2n/d'})^{(d'-1)/2},
\end{eqnarray*}
since we substitute $d=n$ in (\ref{DD2DDD}) and (\ref{DD2DEE}).

Substituting these five identities in identities of Corollary \ref{DDDDD2}, we calculate $\mu_{C_{n/d}}(J_{N_c,A,a}(D_n/D_1))$ for any divisor $d$ of $n$. 
\begin{itemize}
\item[{\rm (I) }] If $2\nmid d$, then we have 
\begin{eqnarray}\label{DR1}
\begin{split}
&\mu_{C_{n/d},t}(J_{N_c,A,a}(D_{n}/D_{1}))\\
=&\sum_{d'\mid d}\mu\Big(\dfrac{d}{d'}\Big)\Big(\dfrac{1}{2d}(1+|A'|t^{n/d'})^{d'}\\
&-\dfrac{1}{2}(1+|A'|t^{n/d'})(1+|A'|t^{2n/d'})^{(d'-1)/2}\Big)
\end{split}
\end{eqnarray}
by (\ref{CAL1CC}). Substituting $t=1$, we have 
\begin{eqnarray}\label{DR11}
\begin{split}
&\mu_{C_{n/d}}(J_{N_c,A,a}(D_{n}/D_{1}))\\
=&\sum_{d'\mid d}\mu\Big(\dfrac{d}{d'}\Big)\Big(\dfrac{1}{2d}|A|^{d'}-\dfrac{1}{2}|A|^{(d'+1)/2}\Big)\\
=&\dfrac{1}{2}\Big(M(|A|,d)-d\sqrt{|A|}M(\sqrt{|A|},d)\Big)
\end{split}
\end{eqnarray}
by Lemma \ref{73eqlemm2}.
\item [{\rm (II)}] If $2\mid d$ and $4\nmid d$, then we have
\begin{eqnarray*}
&{}&\mu_{D_{n/d},t}(J_{N_c,A,a}(D_{n}/D_{1}))\\
&=&\dfrac{1}{2}\sum_{d'\mid d}\mu\Big(\dfrac{d}{ d'}\Big)\varphi_{D_{n/d'},t}(J_{N_c,A,a}(D_n/D_1))\\
&=&\dfrac{1}{2}\sum_{d'\mid d/2}\mu\Big(\dfrac{d}{2d'}\Big)\Big(-\varphi_{D_{n/d'},t}(J_{N_c,A,a}(D_n/D_1))\\
&{}&+\varphi_{D_{n/2d'},t}(J_{N_c,A,a}(D_n/D_1))\Big)\\
&=&\dfrac{1}{2}\sum_{d'\mid d/2}\mu\Big(\dfrac{d}{2d'}\Big)\Big(-(1+|A'|t^{n/d'})(1+|A'|t^{2n/d'})^{(d'-1)/2}\\
&{}&+(1+|A'|t^{n/2d'})^2(1+|A'|t^{n/d'})^{d'-1}\Big)
\end{eqnarray*}
and
\begin{eqnarray*}
&{}&\mu_{D'_{n/d},t}(J_{N_c,A,a}(D_{n}/D_{1}))\\
&=&\dfrac{1}{2}\sum_{d'\mid d}\mu\Big(\dfrac{d}{d'}\Big)\varphi_{D'_{n/d'},t}(J_{N_c,A,a}(D_n/D_1))\\
&=&\dfrac{1}{2}\sum_{d'\mid d/2}\mu\Big(\dfrac{d}{2d'}\Big)\Big((-\varphi_{D'_{n/d'},t}(J_{N_c,A,a}(D_n/D_1))\\
&{}&+\varphi_{D'_{n/2d'},t}(J_{N_c,A,a}(D_n/D_1))\Big)\\
&=&\dfrac{1}{2}\sum_{d'\mid d/2}\mu\Big(\dfrac{d}{2d'}\Big)\Big(-(1+|A'|t^{n/d'})(1+|A'|t^{2n/d'})^{(d'-1)/2}\\
&{}&+(1+|A'|t^{n/d'})^{d'}\Big)
\end{eqnarray*}
by (\ref{CAL2D}). Thus, we have
\begin{eqnarray}\label{DR2}
\begin{split}
&\mu_{C_{n/d},t}(J_{N_c,A,a}(D_{n}/D_{1}))\\
=&\dfrac{1}{2d}\sum_{d'\mid d}\mu\Big(\dfrac{d}{d'}\Big)(1+|A'|t^{n/d'})^{d'}\\
&-\dfrac{1}{4}\sum_{d'\mid d/2}\mu\Big(\dfrac{d}{2d'}\Big)\Big(-2(1+|A'|t^{n/d'})(1+|A'|t^{2n/d'})^{(d'-1)/2}\\
&+(1+|A|t^{n/2d'})^2(1+|A'|t^{n/d'})^{d'-1}+(1+|A'|t^{n/d'})^{d'}\Big)
\end{split}
\end{eqnarray}
by (\ref{CAL2C}). Substituting $t=1$, we have
\begin{eqnarray}\label{DR21}
\begin{split}
&\mu_{C_{n/d}}(J_{N_c,A,a}(D_{n}/D_{1}))\\
=&\dfrac{1}{2d}\sum_{d'\mid d}\mu\Big(\dfrac{d}{d'}\Big)|A|^{d'}\\
&-\dfrac{1}{4}\sum_{d'\mid d/2}\mu\Big(\dfrac{d}{2d'}\Big)(-2|A|^{(d'+1)/2}+|A|^{d'+1}+|A|^{d'})\\
=&\dfrac{1}{2}M(|A|,d)-\dfrac{d}{8}(|A|+1)M(|A|,\dfrac{d}{2})+\dfrac{d}{4}\sqrt{|A|}M(\sqrt{|A|},\dfrac{d}{2})
\end{split}
\end{eqnarray}
by Lemma \ref{73eqlemm2}.
\item [{\rm (III)}] If $4\mid d$, then all divisor $d'$ of $d$ such that $\mu(d/d')\neq 0$ holds satisfy $2\mid d'$. Then, we have
\begin{eqnarray}\label{DR3}
\begin{split}
&\mu_{C_{n/d},t}(J_{N_c,A,a}(D_{n}/D_{1}))\\
=&\sum_{d'\mid d}\mu\Big(\dfrac{d}{d'}\Big)\Big(\dfrac{1}{2d}(1+|A'|t^{n/d'})^{d'}\\
&-\dfrac{1}{4}(1+|A'|t^{n/d'})^2(1+|A'|t^{2n/d'})^{(d'/2)-1}\\
&-\dfrac{1}{4}(1+|A'|t^{2n/d'})^{d'/2}\Big)
\end{split}
\end{eqnarray}
by (\ref{CAL2CC}). Substituting $t=1$, we have
\begin{eqnarray}\label{DR31}
\begin{split}
&\mu_{C_{n/d}}(J_{N_c,A,a}(D_{n}/D_{1}))\\
=&\sum_{d'\mid d}\mu\Big(\dfrac{d}{d'}\Big)\Big(\dfrac{1}{2d}|A|^{d'}-\dfrac{1}{4}|A|^{(d'/2)+1}-\dfrac{1}{4}|A|^{d'/2}\Big)\\
=&\dfrac{1}{2}M(|A|,d)-\dfrac{d}{4}(|A|+1)M(\sqrt{|A|},d)
\end{split}
\end{eqnarray}
by Lemma \ref{73eqlemm2}.
\end{itemize}

Substituting $d=n$ in (\ref{DR11}), (\ref{DR21}) and (\ref{DR31}), we have the following results.
\begin{itemize}
\item [{\rm (I)}] If $2\nmid n$, we have
\[\mu_{C_1}(J_{N_c,A,a}(D_n/D_1))=\dfrac{1}{2}\Big(M(|A|,n)-n\sqrt{|A|}M(\sqrt{|A|},n)\Big).\]
\item [{\rm (II)}] If $2\mid n$ and $4\nmid n$, we have
\begin{eqnarray*}
&{}&\mu_{C_1}(J_{N_c,A,a}(D_n/D_1))\\
&=&\dfrac{1}{2}M(|A|,n)-\dfrac{n}{8}(|A|+1)M(|A|,\dfrac{n}{2})+\dfrac{n}{4}\sqrt{|A|}M(\sqrt{|A|},\dfrac{n}{2}).
\end{eqnarray*}
\item [{\rm (III)}] If $4\mid n$, we have
\begin{eqnarray*}
\mu_{C_1}(J_{N_c,A,a}(D_n/D_1))=&\dfrac{1}{2}M(|A|,n)-\dfrac{n}{4}(|A|+1)M(\sqrt{|A|},n).
\end{eqnarray*}
\end{itemize}
The number $\mu_{C_1}(J_{N_c,A,a}(D_n/D_1))$ is the number of all primitive colorings on a regular $n$-gon of $D_n$ with $A$.

\begin{exa}
We consider when $n=4$, $A=\{0,1,2\}$ and $a=0$. Substituting these condition and $d=n$ in (\ref{DR3}), we have 
\begin{eqnarray*}
&{}&\mu_{C_{1},t}(J_{N_c,A,a}(D_4/D_1))\\
&=&-\dfrac{1}{8}(1+2t^2)^2+\dfrac{1}{4}(1+2t^2)^2+\dfrac{1}{4}(1+2t^4)\\
&&+\dfrac{1}{8}(1+2t)^4-\dfrac{1}{4}(1+2t)^2(1+2t^2)-\dfrac{1}{4}(1+2t^2)^2\\
&=&t^2+2t^3.
\end{eqnarray*}
In particular, we have $\mu_{C_1}(J_{N_c,A,a}(D_4/D_1))=3$. All results of placing $3$ colored on the vertices of a regular $4$-gon are described in Figure 5.
\begin{figure}[h]
	\centering
	\includegraphics[width=8cm,height=2.5cm]{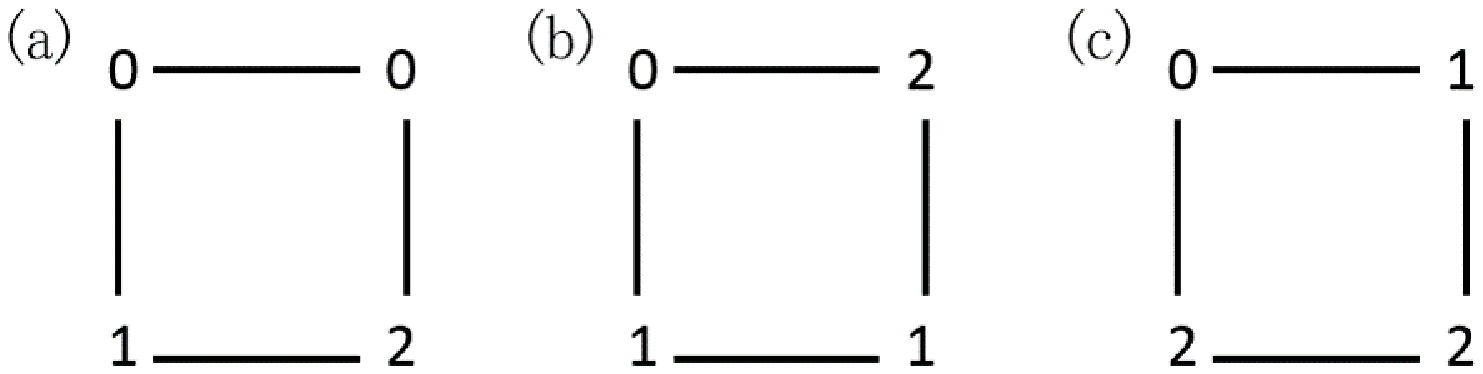}\\
	Figure 5: Primitive colorings on vertices of a regular 4-gon, of $D_4$, with $3$-colors $\{0,1,2\}$.
\end{figure}
In Figure 5, the image (a) is the primitive coloring which has two $0$-colored places, and images (b) and (c) are primitive colorings which have one $0$-colored place.
\end{exa}

\begin{exa}
We consider when $n=5$, $A=\{0,1,2\}$ and $a=0$. Substituting these condition and $d=n$ in (\ref{DR1}), we have 
\begin{eqnarray*}
&{}&\mu_{C_{1},t}(J_{N_c,A,a}(D_5/D_1))\\
&=&-\dfrac{1}{10}(1+2t)^5+\dfrac{1}{2}(1+2t^5)+\dfrac{1}{10}(1+2t)^5-\dfrac{1}{2}(1+2t)(1+2t^2)^2\\
&=&2t^2+4t^3+6t^4.
\end{eqnarray*}
In particular, we have $\mu_{C_1}(J_{N_c,A,a}(D_5/D_1))=12$. All results of placing $3$ colored on the vertices of a regular $5$-gon are described in Figure 6. 
\begin{figure}[h]
	\centering
	\includegraphics[width=9cm,height=6.5cm]{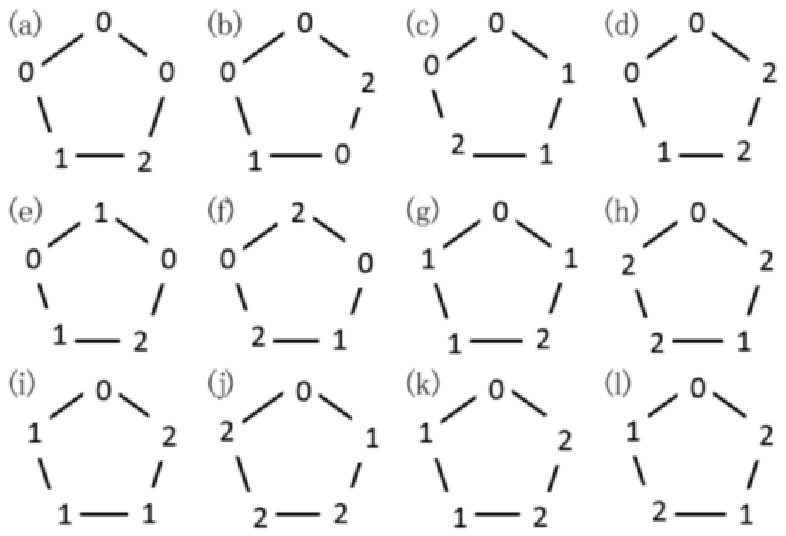}\\
	Figure 6: Primitive colorings on vertices of a regular 5-gon, of $D_5$, with $3$-colors $\{0,1,2\}$.
\end{figure}

In Figure 6, images (a) and (b) are primitive colorings which have three $0$-colored places, and images (c), (d), (e) and (f) are primitive colorings which have two $0$-colored places, and (g), (h), (i), (j), (k) and (l) are primitive colorings which have one $0$-colored place.
\end{exa}

\begin{exa}
We consider when $n=6$, $A=\{0,1\}$ and $a=0$. Substituting these condition and $d=n$ in (\ref{DR2}), we have 
\begin{eqnarray*}
&{}&\mu_{C_{1},t}(J_{N_c,A,a}(D_6/D_1))\\
&=&\dfrac{1}{12}\Big((1+t)^6-(1+t^2)^3-(1+t^3)^2+(1+t^6)\Big)\\
&{}&+\dfrac{1}{4}\Big(-2(1+t^2)(1+t^4)+(1+t)^2(1+t^2)^2+(1+t^2)^3\\
&{}&+2(1+t^6)-(1+t^3)^2-(1+t^6)\Big)=t^3.
\end{eqnarray*}
In particular, we have $\mu_{C_1}(J_{N_c,A,a}(D_6/D_1))=1$. Figure 7 is the unique result which has three $0$-colored places.
\begin{figure}[h]
	\centering
	\includegraphics[width=2.5cm,height=2.5cm]{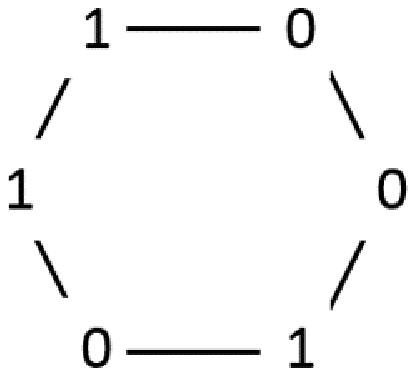}\\
	Figure 7: Primitive colorings on vertices of a regular 6-gon, of $D_6$, with $2$-colors $\{0,1\}$.
\end{figure}
\end{exa}

\newpage
\def\thesection{\Alph{section}}

\setcounter{section}{0}
\section{Appendix}

In this section, we state some notions and propositions which are used in  this paper.

\subsection{For representations of finite groups}
Let $G$ be a finite group.

We define a group homomorphism $\rho:G\rightarrow GL(V)$ by a representation of $G$, where the vector space $V$ is a complex vector space whose dimension is finite. We define the degree of $\rho$ by $\dim_{\mathbb{C}}(V)$. 

The character of $\rho$, which is denoted by $\chi$, is the map $\chi:G\rightarrow\mathbb{C}$ defined by $\chi(g)=\mathrm{Tr}(\rho(g))$ for any $g\in G$.

Next, we define a $G$-set. A $G$-set $X$ is a set equipped with a map 
\[ \iota:G\times X\rightarrow X,\quad \iota(g,x)=gx\]
which satisfies
\[ g_1(g_2x)=(g_1g_2)x,\quad 1x=x\]
for any $g_1,g_2\in G$ and $x\in X$ (Note that we have mentioned it in \S 2.1).

Put $X=\{x_1,\ldots,x_n\}$. We define the permutation representation associated with $X$ by the representation $\rho_X:G\rightarrow GL(V)$ where $V$ is the $n$-dimensional vector space which is formally generated by $\{x_1,\ldots,x_n\}$, and $\rho_X(g)(x_i):=gx_i$ holds for any $g\in G$. Finally, we define the permutation character associated with $X$ by the character of $\rho_X$.

For any $g\in G$, the matrix of $\rho_X(g)$ with respect to the basis $\{x_1,\ldots,x_n\}$ is written by $(x_{i,j})_{1\leq i,j\leq n}$, satisfies 
\[ x_{i,j}=\begin{cases}1 & \mbox{if}\ x_j=gx_i, \\ 0 &\mbox{otherwise.}\end{cases}\]
Let $\chi$ be the permutation character associated with $X$. Then, $\chi(g)$ is the number of $x\in X$ such that $gx=x$ holds. 

\subsection{For commutative rings}

We consider the following two properties on a commutative ring $R$. With the \Mobius\ function, the following proposition holds.

\begin{prop}[\Mobius\ inversion formula]
Let $\vect{a}$ and\\ $\vect{b}$ be infinite vectors of a commutative ring $R$. Two elements $a$ and $b$ satisfy 
\[ b_n=\sum_{d\mid n}a_d\]
for any integer $n\geq 1$ if and only if 
\[ a_n=\sum_{d\mid n}\mu\Big(\dfrac{n}{d}\Big)b_d\]
holds for any integer $n \geq 1$.
\end{prop}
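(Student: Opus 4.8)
The plan is to reduce both directions of the equivalence to the one standard summation property of the \Mobius\ function,
\[ \sum_{d\mid m}\mu(d)=\begin{cases} 1 & \text{if } m=1,\\ 0 & \text{if } m>1,\end{cases} \]
which I take as a known fact (it follows from the multiplicativity of $\mu$ together with a direct check on prime powers). Since the coefficients $\mu(n/d)$ are integers in $\{-1,0,1\}$, every expression below makes sense over an arbitrary commutative ring $R$ via its underlying $\mathbb{Z}$-module structure, and no cancellation or division is ever invoked.

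For the forward direction I would assume $b_n=\sum_{d\mid n}a_d$ for every $n\geq 1$, substitute this into the candidate inverse formula, and interchange the order of summation:
\[ \sum_{d\mid n}\mu\Big(\dfrac{n}{d}\Big)b_d=\sum_{d\mid n}\mu\Big(\dfrac{n}{d}\Big)\sum_{e\mid d}a_e=\sum_{e\mid n}a_e\sum_{e\mid d\mid n}\mu\Big(\dfrac{n}{d}\Big). \]
Writing each $d$ with $e\mid d\mid n$ as $d=ek$ with $k\mid(n/e)$, the inner sum becomes $\sum_{k\mid(n/e)}\mu\big((n/e)/k\big)$, and as $k$ runs over the divisors of $n/e$ so does the quotient $(n/e)/k$; hence by the summation property this inner sum equals $1$ when $e=n$ and $0$ otherwise. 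Only the term $e=n$ survives, so the right-hand side collapses to $a_n$, as required.

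The converse is entirely symmetric: assuming $a_n=\sum_{d\mid n}\mu(n/d)b_d$, I would substitute into $\sum_{d\mid n}a_d$, swap the two summations, and apply the same property of $\mu$ to collapse the resulting double sum to $b_n$. The only delicate point in either direction—and it is purely bookkeeping—is justifying the change of summation order and verifying that $(n/e)/(d/e)$ ranges exactly once over all divisors of $n/e$ as $d$ ranges over the divisors of $n$ divisible by $e$. Beyond this reindexing there is no real obstacle, so the heart of the argument is the single identity for $\sum_{d\mid m}\mu(d)$.
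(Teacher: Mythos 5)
Your argument is correct: the reduction of both directions to the identity $\sum_{d\mid m}\mu(d)=0$ for $m>1$ (and $=1$ for $m=1$), followed by a swap of the two summations and the reindexing $d=ek$, is the standard and complete proof, and your observation that only the $\mathbb{Z}$-module structure of $R$ is used (no division) is exactly the right point for the generality claimed. The paper itself states this proposition in the appendix without proof, treating it as a classical fact, so there is no authorial argument to compare against; your write-up would serve as a valid proof of the statement as given.
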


\subsection{For semirings and their ring completion}
In this section, we tell the notion of the ring completion. For more detail, see \cite[\S 9.3]{Hus}.

We define a semiring by a triple $(S,\alpha,\mu)$ where $S$ is a set, the map $\alpha:S\times S\rightarrow S$ is the addition function usually denoted by $\alpha(a,b)=a+b$, and the map $\mu:S\times S\rightarrow S$ is the multiplication function usually denoted by $\mu(a,b)=ab$, and satisfies all axioms of a ring except the existence of negative or additive inverse. For simplicity, we denote a semiring $(S,\alpha,\mu)$ by $S$.

We define a semiring homomorphism from a semiring $S$ to a semiring $S'$ by a map $f:S\rightarrow S'$ such that $f(a+b)=f(a)+f(b)$, $f(ab)=f(a)f(b)$ and $f(0)=0$ hold.

\begin{prop}
For any semiring $S$, there exists a pair $(S^*,\theta)$, where $S^*$ is a ring and $\theta:S\rightarrow S^*$ is a semiring homomorphism such that if for any ring $R$ and a semiring homomorphism $f:S\rightarrow R$, there exists a ring homomorphism $g:S^*\rightarrow R$ such that $g\circ\theta=f$ holds.
\end{prop}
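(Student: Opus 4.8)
The plan is to build $S^{*}$ explicitly by the Grothendieck construction, formally adjoining additive inverses to $S$, and then to verify the stated universal property directly. Concretely, I would put an equivalence relation on the set $S\times S$ (whose pair $(a,b)$ is intended to represent the formal difference $a-b$) by declaring $(a,b)\sim(c,d)$ if and only if there exists $e\in S$ with $a+d+e=b+c+e$. First I would check that this is an equivalence relation; reflexivity and symmetry are immediate, while transitivity is where the argument requires care, since the semiring $S$ need not admit additive cancellation. The extra slack element $e$ is introduced precisely so that transitivity goes through: given witnesses for $(a,b)\sim(c,d)$ and $(c,d)\sim(f,g)$, one adds the two defining equations and produces a new witness for $(a,b)\sim(f,g)$.

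Next I would define $S^{*}$ to be the set of equivalence classes $[a,b]$, with addition $[a,b]+[c,d]:=[a+c,b+d]$ and multiplication $[a,b][c,d]:=[ac+bd,\,ad+bc]$, and verify that both operations are well defined on classes. I would then confirm the commutative ring axioms, with zero element $[0,0]$, additive inverse $-[a,b]=[b,a]$, and unit $[1,0]$; these reduce to the semiring axioms of $S$ together with the definition of $\sim$. The map $\theta:S\rightarrow S^{*}$ is then $\theta(s):=[s,0]$, and checking $\theta(s+s')=\theta(s)+\theta(s')$, $\theta(ss')=\theta(s)\theta(s')$, and $\theta(0)=[0,0]$ is routine from the definitions, so $\theta$ is a semiring homomorphism.

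For the universal property, given any ring $R$ and semiring homomorphism $f:S\rightarrow R$, I would define $g:S^{*}\rightarrow R$ by $g([a,b]):=f(a)-f(b)$, where the subtraction now makes sense because $R$ is a ring. The main point is well-definedness: if $(a,b)\sim(c,d)$ with witness $e$, then applying $f$ to $a+d+e=b+c+e$ and cancelling $f(e)$ in the ring $R$ gives $f(a)-f(b)=f(c)-f(d)$. Once $g$ is well defined, verifying that it respects addition and multiplication follows from $f$ being a semiring homomorphism, and $g(\theta(s))=f(s)-f(0)=f(s)$ gives $g\circ\theta=f$. Uniqueness of $g$ holds because every class $[a,b]=\theta(a)-\theta(b)$ lies in the subring generated by $\theta(S)$, so any ring homomorphism agreeing with $f$ on $\theta(S)$ is forced.

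The main obstacle I expect is not any single clever idea but the bookkeeping forced by the absence of cancellation in $S$: the slack element $e$ must be threaded consistently through transitivity of $\sim$ and through the well-definedness of multiplication, and this is the only place where the proof could silently go wrong. Everything after $g$ is shown to be well defined is then a straightforward, if tedious, check that the ring structure is preserved.
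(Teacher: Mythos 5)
Your proposal is correct and follows exactly the construction the paper itself gives (the Grothendieck/ring-completion construction on $S\times S$ with the slack element $c$ in the equivalence relation, the same formulas for addition and multiplication, and $\theta(s)=\langle s,0\rangle$); the paper merely states this construction and defers the verification to Husemoller, whereas you carry out the checks, all of which are sound. The only cosmetic issue is the clash of the letter $g$ as both the induced ring homomorphism and a component of a pair in your transitivity argument.
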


In this paper, we call $(S^*,\theta)$ a ring completion, and we construct the ring completion $(S^*,\theta)$ of a semiring $S$ as follows: We define an equivalence relation on $S\times S$ by that $(a_1,b_1)$ and $(a_2,b_2)$ are equivalent if there exists $c\in S$ such that $a_1+b_2+c=a_2+b_1+c$ holds. We denote the equivalence class of $(a,b)$ by $\circg{a,b}$, and let $S^*$ be the set of all equivalence classes $\circg{a,b}$. The set $S^*$ has the following addition and multiplication which are defined by $\circg{a,b}+\circg{c,d}:=\circg{a+c,b+d}$ and $\circg{a,b}\circg{c,d}:=\circg{ac+bd,ad+bc}$. Next, we define a map $\theta:S\rightarrow S^*$ by $\theta(s)=\circg{s,0}$ for any $s\in S$.

\begin{prop}
Let $\{x_1,\ldots,x_n\}$ be a subset of a semiring $S$. We assume that every elements $\alpha$ of $S$ can be write uniquely 
\[ \alpha=m_1x_1+\cdots+m_nx_n\]
for some integers $m_1,\ldots,m_n\geq 0$. Let $(S^*,\theta)$ be the ring completion of $S$. Then $S^*$ has a $\mathbb{Z}$-basis $\{\theta(x_1),\ldots,\theta(x_n)\}$. 
\end{prop}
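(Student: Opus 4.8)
The plan is to verify the two defining properties of a $\mathbb{Z}$-basis separately: that $\{\theta(x_1),\ldots,\theta(x_n)\}$ spans $S^*$ over $\mathbb{Z}$, and that it is $\mathbb{Z}$-linearly independent. Throughout I would use the explicit model of the ring completion recalled just above, in which every element of $S^*$ is an equivalence class $\circg{a,b}$ with $a,b\in S$, where $(a_1,b_1)$ and $(a_2,b_2)$ are equivalent exactly when $a_1+b_2+c=a_2+b_1+c$ for some $c\in S$, and in which $\theta(s)=\circg{s,0}$. A preliminary observation I would record is that $\theta$ is additive, so that the hypothesis $\alpha=m_1x_1+\cdots+m_nx_n$ yields $\theta(\alpha)=m_1\theta(x_1)+\cdots+m_n\theta(x_n)$ for every $\alpha\in S$.

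For spanning, I would start from an arbitrary class $\circg{a,b}\in S^*$ and note that, by the definition of the additive structure, $\circg{a,b}=\circg{a,0}+\circg{0,b}=\theta(a)-\theta(b)$, since $\circg{0,b}$ is the additive inverse of $\theta(b)=\circg{b,0}$ (indeed $\circg{b,0}+\circg{0,b}=\circg{b,b}=\circg{0,0}$). Writing $a=\sum_i m_i x_i$ and $b=\sum_i m_i' x_i$ with $m_i,m_i'\geq 0$ and applying the additivity of $\theta$ then expresses $\circg{a,b}=\sum_i (m_i-m_i')\theta(x_i)$ as a $\mathbb{Z}$-combination of the $\theta(x_i)$, which is exactly spanning.

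For linear independence, suppose $\sum_i k_i\theta(x_i)=0$ in $S^*$ with $k_i\in\mathbb{Z}$. I would separate the indices into $P=\{i:k_i\geq 0\}$ and $Q=\{i:k_i<0\}$ and rewrite the relation as $\theta(a)=\theta(b)$, where $a=\sum_{i\in P}k_i x_i$ and $b=\sum_{i\in Q}(-k_i)x_i$ both lie in $S$ because all their coefficients are nonnegative. The equality $\theta(a)=\theta(b)$ unwinds, via the equivalence relation, to the existence of $c\in S$ with $a+c=b+c$ in $S$. Expanding $c=\sum_i \ell_i x_i$ and comparing the two sides as elements of $S$, the uniqueness hypothesis forces the coefficient of each $x_i$ to agree; since the $P$-coefficient of $x_i$ on the left is $k_i$ while its coefficient on the right is $0$ (and symmetrically for $Q$), this gives $k_i=0$ for every $i$.

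The one step that needs genuine care — and which I regard as the main obstacle — is the passage through the equivalence relation in the independence argument: the relation $\theta(a)=\theta(b)$ only gives $a+c=b+c$ after \emph{adjoining} a cancellation term $c$, and the decisive point is that uniqueness of the nonnegative-integer expansion in $S$ survives this addition. Because $a+c$ and $b+c$ are again honest elements of $S$ with nonnegative coefficients, their unique expansions must coincide term by term, whereupon the $\ell_i$ cancel and leave $k_i=0$. No cancellation law on $S$ itself is assumed or needed; it is precisely the uniqueness hypothesis that substitutes for it, so I would be careful not to silently invoke cancellation in $S$ at any point.
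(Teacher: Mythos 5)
Your proof is correct and follows essentially the same route as the paper: spanning via $\langle a,b\rangle=\theta(a)-\theta(b)$ together with additivity of $\theta$, and independence via the uniqueness hypothesis applied to $a+c=b+c$. The only difference is one of exposition — the paper first proves $\theta$ is injective and then asserts independence "from this," whereas you inline that injectivity argument (splitting $\sum_i k_i\theta(x_i)=0$ into positive and negative parts and unwinding the equivalence relation), which makes explicit the step the paper leaves to the reader.
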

\begin{proof}
First, we show that the semiring homomorphism $\theta$ is injective. Let $s_1,s_2\in S$ satisfying $\theta(s_1)=\theta(s_2)$. Then there exists $c\in S$ such that $s_1+c=s_2+c$ holds. By the assumption, we have $s_1=s_2$, that is, the map $\theta$ is injective. From this, elements $\{\theta(x_1),\ldots,\theta(x_n)\}$ are linearly independent. For any $\circg{a,b}\in S^*$, $\circg{a,b}=\theta(a)-\theta(b)$ holds. Hence, $S^*$ has a $\mathbb{Z}$-basis $\{\theta(x_1),\ldots,\theta(x_n)\}$. 
\end{proof}
For simplicity, we write the ring completion of a semiring $S$ as $S^*$, and we write $\theta(a)$ as $a\in S^*$ for any $a\in S$.

Tomoyuki Tamura

Graduate School of Mathematics, Kyushu-University, 

Nishi-ku Fukuoka,  819-0395, Japan.

E-mail: t-tamura@math.kyushu-u.ac.jp
\end{document}